\documentclass[11pt,a4paper,reqno]{amsart}
\usepackage{amssymb,amsmath,amsthm}
\usepackage[T1]{fontenc}
\usepackage{enumerate}
\usepackage[all]{xy}
\usepackage{fullpage}
\usepackage{comment}
\usepackage{array}
\usepackage{longtable}
\usepackage{stmaryrd}
\usepackage{mathrsfs} 
\usepackage{xcolor}
\usepackage{mathtools}

\usepackage[utf8]{inputenc}
\usepackage[T1]{fontenc}
\usepackage{fancyhdr}

\def\N{\mathbb{N}}

\def\gcd{\mathrm{gcd}}
\def\deg{\mathrm{deg}}

\def\vol{\mathrm{Vol}}
\def\dim{\mathrm{dim}}

\newcommand{\GL}{\mathrm{GL}}
\newcommand{\SL}{\mathrm{SL}}

\newcommand{\SO}{{\mathrm{SO}}}

\newcommand{\rk}{\mathrm{rk}}

\newcommand{\Cone}{\mathrm{Cone}}
\newcommand{\Rmo}{\mathcal R_{\text{monic}}}

\usepackage{hyperref}
\hypersetup{
    colorlinks=true,
    linkcolor=blue,
    filecolor=red,  
    citecolor=green,
    urlcolor=cyan,
    pdftitle={Siegel Rogers},
    pdfpagemode=FullScreen,
    }

\usepackage{cleveref}
\crefformat{section}{\S#2#1#3}
\crefformat{subsection}{\S#2#1#3}
\crefformat{subsubsection}{\S#2#1#3}
\usepackage{enumitem}
\usepackage{tikz}
\usepackage{mathdots}

\newtheorem{theorem}{Theorem}[section]
\newtheorem{lemma}[theorem]{Lemma}
\newtheorem{claim}[theorem]{Claim}

\newtheorem{proposition}[theorem]{Proposition}

\theoremstyle{definition}
\newtheorem{definition}[theorem]{Definition}

\theoremstyle{remark}
\newtheorem{remark}[theorem]{Remark}

\numberwithin{equation}{section}

\DeclareMathOperator{\supp}{supp}

\title{Quantitative Oppenheim Conjecture for Random Quadratic Forms and Optimal Variance Bounds in Function Fields}
\author{Jiyoung Han}
\email{jiyoung.han@pusan.ac.kr}
\address{Department of Mathematics Education, Pusan National University}
\author{Noy Soffer Aranov}
\email{noy.sofferaranov@tugraz.at}
\address{Graz University of Technology, Institute of Analysis and Number Theory, 8010 Graz, Austria}
\subjclass[2010]{Primary: 11E08, 11H06, 11K38. Secondary: 11J61}
\keywords{Quadratic Forms, Effective Density, Siegel Transform, Rogers' Second Moment Formula, Geometry of Numbers, Oppenheim Conjecture, Function Fields, Lattice Point Counting}
\begin{document}

\begin{abstract}
    We prove a quantitative version of Oppenheim's conjecture in the function field setting. In order to do so, we compute the higher moments of the Siegel transform. In particular, we find an optimal bound on the variance of the number of lattice points in a set. Moreover, we compute the exact variance of the number of lattice points in a ball, which is of independent interest. 
\end{abstract}
\maketitle
\section{Introduction}
Let $Q$ be a non-degenerate indefinite irrational quadratic form in $n\geq 3$ variables. Oppenheim's conjecture \cite{Opp} states that every irrational non-degenerate indefinite quadratic form $Q$ has a dense image, i.e., $\overline{Q(\mathbb{Z}^n)}=\mathbb{R}$. Margulis \cite{Mar} proved that Oppenheim's conjecture holds by using homogeneous dynamics. In her seminal paper, Ratner \cite{Rat} proved the orbit closure theorem for unipotent flows, which implies Oppenheim's conjecture. 

Quantitative versions of Oppenheim's conjecture, which handle the rate at which the image of $Q$ becomes dense, were also established. 
This direction was originally initiated by Eskin, Margulis, and Mozes \cite{EMM}, who established that for every non-degenerate, indefinite, irrational quadratic form $Q$ of signature $(p,q)$ for $p\geq 3$ and $q\geq 1$, for every fixed interval $I$, and for every large enough $t$, one has 
$$\#\left\{\mathbf{v}\in \mathbb{Z}^n:Q(\mathbf{v})\in I,\Vert \mathbf{v}\Vert\leq t\right\}\sim \lambda_Q\vert I\vert t^{n-2},$$
where $\lambda_Q$ is a constant depending only on $Q$. A few years later, the same authors \cite{EMM05} showed that the same asymptotic formula holds for split quadratic forms, and recently, Wooyeon Kim \cite{Kim} proved that such an asymptotic holds for the 3-dimensional case. In both cases, the quadratic forms have to satisfy some Diophantine condition.

The error term in the above formula was later bounded by Athreya and Margulis \cite{AM} and by Kelmer and Yu \cite{KY} for almost every quadratic form. 
Recently, Kelmer and Yu \cite{KY23} improved the aforementioned error term for generic quadratic forms of signature $(n-1,1)$. These results are deduced from \emph{moment formulae for Siegel transforms} about the various set-ups. For further historical remarks about Siegel transforms and their moments and application to Oppenheim conjecture-typed problems, see \cite{KY23}.

The Oppenheim conjecture has been studied over different fields as well. For example, Borel and Prasad \cite{BP} proved that the Oppenheim conjecture holds over $S$-arithmetic fields whose ground field has characteristic zero. Furthermore, quantitative versions of Oppenheim's conjecture have been established for isotropic forms, that is, forms $Q$ for which there exists some non-zero vector $\mathbf{v}$, such that $Q(\mathbf{v})=0$, in the $S$-arithmetic setting by the first named author, Lim, and Mallahi-Karai \cite{HLM} and for generic quadratic forms of rank $3$ and $4$ by the first named author \cite{Han3-4}. 

Mohammadi \cite{Moh} proved that Oppenheim's conjecture holds over local fields of positive characteristic. We note that since Ratner's orbit closure theorem \cite{Rat} has not been established in the function field setting, Mohammadi's proof closely follows the topological ideas in Margulis' proof of the real Oppenheim conjecture \cite{Mar}, which do not lead to quantitative results. Despite being well studied over fields with zero characteristic, quantitative versions of Oppenheim's conjecture have not been studied over local fields of positive characteristic. In this paper, we fill this gap by proving a quantitative version of the result of Mohammadi in a generic sense. To prove our main theorem, we establish a function field analogue of Rogers' higher moment formula of the Siegel transform \cite{Rogers1955}. First, we briefly introduce the function field setting.
\subsection{Function Fields}
 Let $q$ be the power of an odd prime, let $\mathcal{R}=\mathbb{F}_q[x]$, and let $\mathcal{K}=\mathbb{F}_q(x)$ be the field of fractions of $\mathcal{R}$. Equip $\mathcal{K}$ with an absolute value $\left|\frac{f}{g}\right|=q^{\deg(f)-\deg(g)}$, so that the topological completion of $\mathcal{K}$ with respect to $\vert \cdot \vert$ is $\mathcal{K}_{\infty}=\mathbb{F}_q(\!(x^{-1})\!)$. It is well known that every local field of positive characteristic is isomorphic to some field of the form $\mathcal{K}_{\infty}$ as above, and therefore, this paper will only handle these types of fields. 
 
 The maximal compact order in $\mathcal{K}_{\infty}$ is $\mathcal{O}_{\infty}=\mathbb{F}_q[[x^{-1}]]$ and its maximal ideal is $\mathfrak{m}=x^{-1}\mathcal{O}_{\infty}$. In this paper, $m_{\mathcal{K}_{\infty}}$ denotes the translation-invariant Haar measure on $\mathcal{K}_{\infty}$ satisfying $m_{\mathcal{K}_{\infty}}\left(\mathcal{O}_{\infty}\right)=1$. Let $\operatorname{Vol}$ denote the $n$-dimensional product measure $m_{\mathcal{K}_{\infty}}^n$ on $\mathcal{K}_{\infty}^n$. 
 
 For $\mathbf{v}=(v_1,\dots,v_n)\in \mathcal{K}_{\infty}^n$, we define the norm of $\mathbf{v}$ by $\Vert \mathbf{v}\Vert=\max_{i=1,\dots,n}\vert v_i\vert$. A metric ball in $\mathcal K_\infty^n$ is a set of the form $B(\alpha,r):=\alpha+x^r\mathcal{O}_{\infty}^n$, where $\alpha\in \mathcal{K}_{\infty}^n$ and $r\in \mathbb Z$. The norm $\Vert \cdot \Vert$ satisfies the ultrametric inequality: For $\mathbf{u},\mathbf{v}\in \mathcal{K}_{\infty}^n$, we have
    \begin{enumerate}
        \item \label{lem:UM} $\Vert \mathbf{u}+\mathbf{v}\Vert\leq \max\{\Vert \mathbf{u}\Vert,\Vert\mathbf{v}\Vert\}$;
        \item \label{lem:UM=} If $\Vert \mathbf{u}\Vert\neq \Vert \mathbf{v}\Vert$, then $\Vert \mathbf{u}+\mathbf{v}\Vert=\max\{\Vert \mathbf{u}\Vert,\Vert \mathbf{v}\Vert\}$.
    \end{enumerate}

The ultrametric inequality, in particular \eqref{lem:UM=}, often enables us to compute norms in a highly accurate way. Moreover, in the ultrametric setting, a pair of balls are either disjoint or one is contained in the other. This property enables very accurate lattice point counting bounds (see Proposition \ref{lem:BallErr}). 

\subsection{Quadratic forms and the Space of Lattices} 
We say that two non-degenerate quadratic forms $Q$, $Q'$ on $\mathcal K_\infty^n$ are \emph{equivalent} if there exists $g\in \GL_n(\mathcal K_\infty)$ so that
\[
Q'(\mathbf{v})= Q(g\mathbf v),\quad \forall \mathbf v\in \mathcal K_\infty^n.
\] 
It is known that two non-degenerate quadratic forms over a function field are equivalent if they have the same rank, discriminant and Hasse-Witt invariant (see \cite[Remark 5.12]{EKM} or \cite[V.3.17]{Lam} for example). Moreover $$\operatorname{GL}_n(\mathcal{K}_{\infty})=\bigsqcup_{g\in S}\mathcal{K}_{\infty}^\times g \operatorname{SL}_n(\mathcal{K}_{\infty}),$$
where $S=\left\{\operatorname{diag}\{x^{t_1},x^{t_2},\cdots,x^{t_n}\}:t_i\geq 0,0\leq \sum_{i=1}^nt_i\leq n-1\right\}$ and $$\mathrm{G}:=\SL_n(\mathcal K_\infty)=\left\{g\in \GL_n(\mathcal K_\infty): |\det g|=1\right\}.$$
Thus, up to scalar multiplication and multiplication by a matrix from the set $S$, the space of non-degenerate quadratic forms is a finite disjoint union of $\mathrm{G}$-orbits, where each of them is isomorphic to $\SO(Q_0)\setminus \mathrm{G}$ for some quadratic form $Q_0$ and $$\SO(Q_0)=\{g\in \mathrm{G}: Q_0(g\mathbf v)=Q_0(\mathbf v),\;\forall \mathbf v\in \mathcal K_\infty^n\}.$$

On the other hand, the density $Q(\mathcal R^d)$ of a quadratic form at integral vectors is invariant under the action of $\Gamma:=\SL_d(\mathcal R)$, i.e., if $Q'=Q\circ \gamma$ for some $\gamma\in \Gamma$, then $Q(\mathcal{R}^d)$ is dense if and only if $Q'(\mathcal{R}^d)$ is dense.

Thus, this density is a property which can be defined on the bi-quotient space $\SO(Q_0)\setminus \mathrm{G}/\Gamma$, where $Q_0$ is a fixed quadratic form. Equivalently, this property can be regarded as an $\SO(Q_0)$-invariant property on $\mathcal{L}_n:=\mathrm{G}/\Gamma$, the space of of unimodular $\mathcal R$-lattices, where  $g\Gamma$ corresponds to $g\mathcal R^n$. In this philosophy, we associate the quadratic form $Q=Q_0\circ g$, where $g\in \mathrm{G}$, with the lattice $g\mathcal R^n$.
The advantage of considering $\mathcal L_n$ is that this homogeneous space supports a natural $\mathrm{G}$-invariant probability measure, denoted by $m_{\mathcal{L}_n}$. 
\subsection{Main Results}
Recall that any isotropic quadratic form $Q$ is equivalent to
\[
Q_0(x_1, \ldots, x_n)=2x_1x_n+ Q'(x_2, \ldots, x_{n-1}),
\]
where $Q'$ is some non-degenerate (on $\mathcal{K}_{\infty}^{n-2}$) quadratic form with $n-2$ variables with coefficients in $\mathcal R$.
Our first main result is a quantification of the result of \cite{Moh} for generic non-degenerate isotropic quadratic forms.
\begin{theorem}
\label{thm:N_QCount}
Let $n\ge 3$ and let $I\subseteq \mathcal{K}_{\infty}$ be a bounded measurable subset. For every $\frac{1}{2}<\delta<1$ and for almost every non-degenerate isotropic quadratic form $Q$, there exists $C_Q>0$ depending only on $Q$, such that
\begin{equation}\label{eqn:Q(v)inICount}\#\left\{\mathbf{v}\in \mathcal{R}^n:Q(\mathbf{v})\in I,\Vert \mathbf{v}\Vert\leq q^t\right\}=C_Qq^{(n-2)t}m_{\mathcal{K}_{\infty}}(I)+O_{Q,I}\left(q^{\delta t(n-2)}\right).\end{equation}
\end{theorem}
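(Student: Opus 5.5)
We follow the strategy of Athreya--Margulis and Kelmer--Yu: reduce the count to a Siegel transform on $\mathcal L_n$ and use a second-moment (variance) bound together with a Borel--Cantelli argument along the discrete sequence of radii $q^t$, $t\in\N$.

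Write $Q=Q_0\circ g$ with $g\in \mathrm G$. For $t\in\N$ let
\[
E_t=\{\mathbf w\in\mathcal K_\infty^n: Q_0(\mathbf w)\in I,\ \Vert g^{-1}\mathbf w\Vert\le q^t\}.
\]
The left-hand side of \eqref{eqn:Q(v)inICount} is then $\widehat{\chi_{E_t}}(g\mathcal R^n)$, where $\widehat f(\Lambda)=\sum_{\mathbf v\in\Lambda\setminus\{0\}}f(\mathbf v)$ is the Siegel transform (up to the single vector $\mathbf v=0$).

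\textbf{Step 1: volume of $E_t$.} We compute $\vol(E_t)$ and show it equals $C_Q\, m_{\mathcal K_\infty}(I)\, q^{(n-2)t}$ up to a negligible error. Here $C_Q=c_{Q_0}(g)$ is a density factor coming from integrating over the level sets $\{Q_0=s\}$ in the form $2x_1x_n+Q'$. In the non-archimedean setting this should be an essentially exact computation via the ultrametric inequality: decompose the box by $|x_1|$ and solve for $x_n$. Since $I$ is bounded, the relevant level sets are compact, which handles uniformity.

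\textbf{Step 2: Siegel mean and variance.} The Siegel mean value formula gives $\int\widehat{\chi_{E_t}}\,dm_{\mathcal L_n}=\vol(E_t)$. For $n\ge3$, the function-field Rogers second moment formula established later in the paper should give
\[
\int|\widehat{\chi_{E}}-\vol(E)|^2\,dm_{\mathcal L_n}\ll \vol(E)
\]
(the optimal variance bound announced in the abstract). Chebyshev's inequality then gives
\[
m\big(\{\Lambda:|\widehat{\chi_{E_t}}(\Lambda)-\vol(E_t)|>q^{\delta(n-2)t}\}\big)\ll q^{(n-2)t(1-2\delta)}.
\]
This is summable in $t$ exactly when $\delta>1/2$. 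For the statement to be for almost every $Q$, we need the sets $E_t$ to be defined independently of $g$. So fix $g$ ranging in a compact neighborhood, replace $E_t$ by the $g$-independent set $\{Q_0(\mathbf w)\in I,\ \Vert \mathbf w\Vert\le q^t\}$, and apply the estimate to the lattice $g\mathcal R^n$ after absorbing $g^{-1}$ into the norm. Because $\Vert\cdot\Vert$ is ultrametric and $g$ varies in a small neighborhood of some $g_0$ (for instance $g\in g_0\GL_n(\mathcal O_\infty)$-type neighborhoods preserving the norm), the set $\{\Vert g^{-1}\mathbf w\Vert\le q^t\}$ is constant on such neighborhoods. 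This avoids any sandwiching argument.

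\textbf{Step 3: Borel--Cantelli and lifting.} Borel--Cantelli then gives, for $m_{\mathcal L_n}$-a.e.\ lattice, hence for a.e.\ $g$ in each such neighborhood, that the discrepancy is at most $q^{\delta(n-2)t}$ for all $t\ge t_0(g)$. Enlarging the constant to cover $t<t_0$ gives the $O_{Q,I}$ error. Since radii in $\mathcal K_\infty$ take only the values $q^t$, no interpolation between radii is needed. The full measure set is then lifted from $\mathcal L_n$ to $\mathrm G$ and to quadratic forms via the orbit decomposition $\SO(Q_0)\backslash \mathrm G$. Countably many neighborhoods, isotropic classes $Q_0$, and scalings by $S$ suffice.

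\textbf{Main obstacle.} The key input is the second moment bound $\mathrm{Var}\ll\vol(E)$, uniform over the sets $E_t$. The Rogers formula contributes terms from pairs of linearly dependent vectors, i.e.\ $\mathbf v_2=\lambda\mathbf v_1$ with $\lambda\in\mathcal K$. Over function fields these must be controlled by a sum over $\lambda=a/b$ of $\vol(E\cap \lambda^{-1}E)$, bounded by $\vol(E)$ times a convergent sum over $\mathcal R$-coprime pairs. If the optimal bound holds only for general sets with a log loss, $\delta>1/2$ still gives summability. The volume asymptotic in Step 1 also requires care when $n=3$, where $Q'$ is one-dimensional.
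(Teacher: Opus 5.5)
Your Steps 2 and 3 are the paper's argument. The pieces match one for one:
\begin{itemize}
\item the variance bound of Theorem~\ref{thm:Rogers}, restated as Lemma~\ref{lem:DiscInt};
\item Chebyshev's inequality on a compact set (Lemma~\ref{lem:measLargDiscrep});
\item the observation that $\{\Vert g^{-1}\mathbf w\Vert\le q^t\}$ is constant on cosets $h_i\SL_n(\mathcal O)$ (Remark~\ref{rem:Covering}), which replaces sandwiching;
\item Borel--Cantelli along $t\in\N$ using $\sum_t q^{(n-2)t(1-2\delta)}<\infty$ (Theorem~\ref{thm:Disca.e.Small}).
\end{itemize}
One minor point: with the paper's normalization $\vol(\mathcal O^n)=1$, the Siegel mean is $q^n\vol(E_t)$ rather than $\vol(E_t)$. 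This only rescales $C_Q$.

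The genuine gap is Step~1, which you assert rather than prove. It is where the paper does most of its work: Theorem~\ref{thm:VolEstimate}, via Theorem~\ref{volume estimate} and Theorem~\ref{thm: Witt}. You need $\vol(E_t)=c\,m_{\mathcal K_\infty}(I)\,q^{(n-2)t}+O(q^{\delta(n-2)t})$. The method you indicate, decomposing by $|x_1|$ and solving for $x_n$, does not apply as stated, because the two simplifications never hold in the same coordinates:
\begin{itemize}
\item $Q_0$ has the shape $2w_1w_n+Q'$ only in the coordinates $\mathbf w=g\mathbf v$, where the region is the parallelepiped $gB(0,q^t)$, not a box;
\item in the coordinates where the region is a box, the form $Q_0\circ g$ is arbitrary and no longer of that shape.
\end{itemize}

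The paper resolves this as follows. It rescales each shell, $\vol\{\Vert\mathbf v\Vert=q^t,\ Q(\mathbf v)\in I\}=q^{nt}\vol\{\mathbf w\in A:\ x^{2t}Q_0(\mathbf w)\in I\}$ with $A=g(\mathcal O^n\setminus\mathfrak m^n)$, split into norm shells. It then uses the transitivity of $\SO(Q_0)\cap\SL_n(\mathcal O)$ on $\{Q_0=\alpha,\ \Vert\mathbf v\Vert=q^z\}$ twice:
\begin{itemize}
\item to show that for large $t$ every such $\mathbf w$ has the form $\mathbf u_1+x^{-2t}\mathbf u_2$ with $\mathbf u_1\in A\cap\Cone(Q_0)$ and $2Q_0(\mathbf u_1,\mathbf u_2)\in I$;
\item to compute the fibre over each $\mathbf u_1$ by moving it to $\mathbf e_1$.
\end{itemize}
This gives the exact shell formula $J_Q\,m_{\mathcal K_\infty}(I)\,q^{(n-2)t}$ for $t\ge t_{Q,I}$. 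Summing the geometric series then gives the main term with error $O_{Q,I}(1)$.

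If you want to avoid Witt's theorem, there is an alternative. $\Vert\nabla Q\Vert$ is bounded below on the unit sphere. An ultrametric implicit-function (Hensel) argument therefore shows that the push-forward under $Q$ of $\vol$ restricted to $\{\Vert\mathbf v\Vert=1\}$ has constant density $\rho_0>0$ on a neighbourhood of $0$. This yields the exact shell formula $\rho_0\,m_{\mathcal K_\infty}(I)\,q^{(n-2)t}$ as soon as $q^{-2t}\sup_{s\in I}|s|$ is small, and it works directly for any bounded measurable $I$.

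Without one of these arguments, neither the main term nor the bound $\vol(E_t)\ll q^{(n-2)t}$ used in your Chebyshev step is established.
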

To prove Theorem \ref{thm:N_QCount}, we compute the volume of the set $\{\mathbf{v}\in \mathcal{K}_{\infty}^n:Q(\mathbf{v})\in I,\Vert \mathbf{v}\Vert\leq q^t\}$, establish a formula for the higher moments of the Siegel transform,  and then apply this formula to obtain \eqref{eqn:Q(v)inICount}.

\begin{theorem}
    \label{thm:VolEstimate}
    Let $n\ge 3$.
    Let $Q$ be a non-degenerate isotropic quadratic form, and let $I\subseteq \mathcal{K}_{\infty}$ be a one-dimensional ball. There exists $J_Q>0$, depending only on $Q$, such that for every $t>t_{Q,I}$, we have \begin{equation}\label{eqn:volLevelSet}\operatorname{Vol}\left\{\mathbf{v}\in \mathcal{K}_{\infty}^n:Q(\mathbf{v})\in I,\Vert \mathbf{v}\Vert=q^t\right\}=J_Qq^{(n-2)t}m_{\mathcal{K}_{\infty}}(I).\end{equation}
    
    Moreover, there exists $t_{Q,I}>0$, such that for every $t>t_{Q,I}$, we have 
    $$\operatorname{Vol}\left\{\mathbf{v}\in \mathcal{K}_{\infty}^n:Q(\mathbf{v})\in I,\Vert \mathbf{v}\Vert\leq q^t\right\}=C_Qq^{(n-2)t}m_{\mathcal{K}_{\infty}}(I)+O_{Q,I}(1),$$
where $C_Q>0$ is defined as in Theorem~\ref{thm:N_QCount}. Furthermore, we have $C_Q=J_Q\sum_{m=0}^{\infty}q^{-(n-2)m}$.
\end{theorem}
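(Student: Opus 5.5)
The plan is to combine an ultrametric ``coarea'' computation on each shell $\{\Vert\mathbf v\Vert=q^t\}$ with the scaling $\mathbf v\mapsto x\mathbf v$, and then to sum over shells.

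Write $Q(\mathbf v)=\mathbf v^{T}A\mathbf v$ with $A$ symmetric and invertible, which is possible since $q$ is odd and $Q$ is non-degenerate. Then $\nabla Q(\mathbf v)=2A\mathbf v$. Because $|2|=1$ and $A$ is invertible, there are constants $c_Q,C_Q'>0$ with
\[
c_Q\Vert\mathbf v\Vert\le\Vert\nabla Q(\mathbf v)\Vert\le C_Q'\Vert\mathbf v\Vert .
\]
This is the key non-degeneracy input: on the shell of radius $q^t$ there are no critical points, and the gradient always has size comparable to $q^t$. Moreover $Q(\mathbf v+\mathbf w)-Q(\mathbf v)=2\mathbf w^TA\mathbf v+Q(\mathbf w)$.

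\textbf{Step 1: local fibration.} Partition the shell into balls $\mathbf v_0+x^{-t-k}\mathcal O_\infty^n$, with $k$ a fixed constant depending on $A$. On such a ball pick a coordinate $i$ with $|\partial_iQ(\mathbf v_0)|=\Vert\nabla Q(\mathbf v_0)\Vert=:q^{t+s}$; here $s$ is bounded in terms of $Q$. Fix the other coordinates. Then $x_i\mapsto Q(\mathbf v)$ is, by the ultrametric inequality (a Hensel-type argument, since the quadratic term is dominated by the linear term once $k$ is large), a measure-scaling bijection from the $x_i$-ball onto a ball of radius $q^{t+s}\cdot q^{-t-k}$, scaling Haar measure by exactly $q^{t+s}$.

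Consequently, for any ball $I$ of radius at most $q^{s-k}$, the measure of $\{x_i: Q(\mathbf v)\in I\}$ is $m_{\mathcal K_\infty}(I)\,q^{-t-s}$ whenever $I$ meets the image, and $0$ otherwise. Integrating over the remaining coordinates gives
\[
\operatorname{Vol}\{\mathbf v:\ Q(\mathbf v)\in I,\ \Vert\mathbf v\Vert=q^t\}=m_{\mathcal K_\infty}(I)\int\rho_t(y)\,\frac{dy}{m_{\mathcal K_\infty}(I)},
\]
where the density $\rho_t(y)$ is locally constant in $y$ at scale $q^{s-k}$.

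\textbf{Step 2: independence of the centre.} Show that $\rho_t(y)$ is actually constant for $|y|\le R$ once $t\ge t_{Q,I}$, where $R$ bounds $I$. Rescaling by $x^{-t}$ maps the shell to the unit shell and $y$ to $x^{-2t}y$, which is $o(1)$. So $\rho_t(y)=q^{(n-2)t}\rho_0(x^{-2t}y)$: the volume scales as $q^{nt}$, the target as $q^{2t}$, and the Jacobian factor as $q^{-2t}$. By local constancy of $\rho_0$ near $0$ at the fixed scale from Step 1, $\rho_0(x^{-2t}y)=\rho_0(0)=:J_Q$ for $t$ large. This gives \eqref{eqn:volLevelSet} with $J_Q=\rho_0(0)$.

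Positivity of $J_Q$ comes from isotropy. There is a nonzero $\mathbf v$ on the unit shell with $Q(\mathbf v)=0$, and since the gradient there is nonzero, the fibration of Step 1 produces a positive-measure set near $\mathbf v$ with $Q$-values filling a ball around $0$.

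\textbf{Step 3: summing over shells.} Decompose $\{\Vert\mathbf v\Vert\le q^t\}$ into the shells $\Vert\mathbf v\Vert=q^{t-m}$, $m\ge0$. For $t-m>t_{Q,I}$ use Step 2. The remaining region $\{\Vert\mathbf v\Vert\le q^{t_{Q,I}}\}$ has volume $O_{Q,I}(1)$. Therefore
\[
\sum_{m=0}^{t-t_{Q,I}}J_Qq^{(n-2)(t-m)}m_{\mathcal K_\infty}(I)
=C_Qq^{(n-2)t}m_{\mathcal K_\infty}(I)-J_Qm_{\mathcal K_\infty}(I)\sum_{m>t-t_{Q,I}}q^{(n-2)(t-m)},
\]
with $C_Q=J_Q\sum_{m\ge0}q^{-(n-2)m}$. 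The tail sum is $O(1)$ because $n\ge3$.

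\textbf{Main obstacle.} I expect the main difficulty to be Step 2. It requires showing uniformly that the local-constancy scale of $\rho_0$ near $0$ is bounded below; this uses compactness of the unit shell together with the uniform gradient lower bound $c_Q$. It also requires treating balls $I$ larger than the scale $q^{s-k}$. For those, I would split $I$ into finitely many sub-balls of the allowed radius, all of them with centres of norm at most $R$, and apply the argument to each.
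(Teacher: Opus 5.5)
Your proof is correct, but it takes a different route from the paper.

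\textbf{The paper's route.} It first moves $Q$ to the normal form $Q_0=2v_1v_n+Q'$ and reduces to test sets $A$ in the unit shell with $A+x^{-K}\mathcal O^n=A$. It then uses the Witt-type transitivity of $\mathrm{K}=\SO(Q_0)\cap\SL_n(\mathcal O_\infty)$ (Theorem~\ref{thm: Witt}) to show that $\{\mathbf w\in A: x^{2t}Q_0(\mathbf w)\in I\}$ is exactly a tube $\{\mathbf u_1+x^{-2t}\mathbf u_2\}$ over $A\cap\Cone(Q_0)$. Counting residue classes modulo $x^{-\ell}$ identifies the constant as the normalized cone measure $\lim_\ell \#[A\cap\Cone(Q_0)]_\ell/q^{(n-1)\ell}$. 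The shells are then summed exactly as in your Step 3.

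\textbf{Your route.} You replace the transitivity theorem by a purely local fibration argument. Since $|2|=1$ and $A$ is invertible, $\Vert\nabla Q(\mathbf v)\Vert\asymp\Vert\mathbf v\Vert$. Hence on each small ball of the shell the coordinate fibres are scaled isometries onto one and the same ball $Q(\mathbf v_0)+x^{s-k}\mathcal O_\infty$, because $Q$ varies by at most $q^{s-k}$ on such a ball. This makes the pushforward density $\rho_0$ explicitly locally constant at the uniform scale $q^{s_{\min}-k}$. The rescaling $\mathbf v\mapsto x^{-t}\mathbf v$ then moves $x^{-2t}I$ into the constancy ball around $0$, giving $J_Q=\rho_0(0)$.

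\textbf{What each approach buys.} Your argument needs neither the normal form nor Theorem~\ref{thm: Witt}, whose proof in the paper is only sketched. It uses isotropy solely to get $J_Q>0$. The \emph{main obstacle} you flag is already settled by the uniform constants of Step~1, so splitting $I$ is unnecessary: for large $t$, $x^{-2t}I$ is automatically small enough. The paper's route, in exchange, gives a concrete geometric description of the constant as the cone measure of the test set. It also parallels the real and $S$-arithmetic treatments it is modelled on.

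\textbf{Cosmetic slips.} The displayed formula in Step~1 should be the integral of $\rho_t$ over $I$. In Step~3 the top value $m=t-t_{Q,I}$ overlaps the residual region $\{\Vert\mathbf v\Vert\le q^{t_{Q,I}}\}$ by one shell. Neither affects the result.
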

\begin{remark}
    Unlike the real setting, where the volume estimates as in the left hand side of \eqref{eqn:volLevelSet} are only asymptotic \cite{KY,HLM,Han3-4}, in the function field setting, the volume of such sets can be computed exactly, as in \eqref{eqn:volLevelSet}.
\end{remark}
To deduce Theorem \ref{thm:N_QCount} from Theorem \ref{thm:VolEstimate}, we define the function field analogue of the Siegel transform, which enables lattice point counting claims. Towards this end, we say that a function $f:\mathcal{K}_{\infty}^n\rightarrow \mathbb{C}$ is \emph{smooth} if there exists $\varepsilon>0$, such that $f$ is constant on every ball of radius $\varepsilon$. 
\begin{definition}[Siegel Transform]
    For $f:\mathcal{K}_{\infty}^n\rightarrow \mathbb{C}$ smooth with compact support, define the \emph{Siegel transform of $f$}, $\widetilde{f}:\mathcal{L}_n\rightarrow \mathbb{C}$, by $$\widetilde{f}(\Lambda)=\sum_{\mathbf{v}\in \Lambda\setminus\{0\}}f(\mathbf{v}).$$
    For example, if $f$ is the indicator function of a set $B\subseteq \mathcal{K}_{\infty}^n$, then, $\widetilde{f}(\Lambda)=\#B\cap (\Lambda\setminus \{0\})$. 
\end{definition}
We first establish a function field analogue of Siegel's Lemma \cite{Sie}.
\begin{theorem}[Function Field Analogue of Siegel's Lemma]
\label{thm:Siegel}
Let $n\geq 2$ and let $f:\mathcal{K}_{\infty}^n\rightarrow \mathbb{C}$ be a smooth function. Then 
\begin{equation}
\label{eqn:SiegelFormula}\int_{\mathcal{L}_n}\widetilde{f}dm_{\mathcal{L}_n}=q^n\int_{\mathcal{K}_{\infty}^n}fd\operatorname{Vol}.\end{equation}
\end{theorem}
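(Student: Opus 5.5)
The plan is the classical two-step argument: first show that $f\mapsto \int_{\mathcal L_n}\widetilde f\,dm_{\mathcal L_n}$ is a $\mathrm G$-invariant Radon measure on $\mathcal K_\infty^n\setminus\{0\}$, hence a constant multiple of $\operatorname{Vol}$; then find the constant by a scaling limit that uses the ultrametric structure. Since the Siegel transform omits $0$, it is enough to treat smooth compactly supported $f$ (the ones for which $\widetilde f$ is defined), and by linearity and positivity we may take $f\ge 0$.

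\textbf{Step 1 (finiteness).} For $f=\mathbf 1_{B}$ with $B=x^T\mathcal O_\infty^n$, I need $\widetilde f\in L^1(\mathcal L_n)$. For every $\Lambda$, the set $\Lambda\cap B$ is a finite $\mathbb F_q$-vector space, because of the ultrametric inequality. Reduction theory for $\SL_n(\mathcal R)$ (Mahler/Harder; Siegel domains built from diagonal elements $\operatorname{diag}(x^{a_1},\dots,x^{a_n})$) gives a bound of the form $\#(\Lambda\cap B)\le C_T\,\alpha(\Lambda)$, where $\alpha(\Lambda)=\max_i \bigl(q^{a_1+\cdots+a_i}\bigr)$ is built from the successive minima. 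I then have to check that $\alpha\in L^1(m_{\mathcal L_n})$. I would do this by integrating over a Siegel set: the Haar measure there carries the modular factor $\prod_{i<j}q^{-(a_i-a_j)}$, and the resulting geometric series in the $a_i$ converge. This step is the main obstacle, because it needs the function-field reduction theory and the exact form of the measure on the Siegel set. Since every compactly supported $f$ is dominated by a multiple of some such $\mathbf 1_B$, this gives finiteness for all $f$.

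\textbf{Step 2 (invariance and uniqueness).} For $g\in\mathrm G$ we have $\widetilde{f\circ g}(\Lambda)=\widetilde f(g\Lambda)$, and $m_{\mathcal L_n}$ is $\mathrm G$-invariant. Hence $\mu(f):=\int\widetilde f\,dm_{\mathcal L_n}$ is a $\mathrm G$-invariant positive functional on $C_c(\mathcal K_\infty^n\setminus\{0\})$. The group $\mathrm G$ acts transitively on $\mathcal K_\infty^n\setminus\{0\}$, and the stabilizer of $e_1$ is unimodular, so $\mathcal K_\infty^n\setminus\{0\}\cong \mathrm G/\mathrm{Stab}(e_1)$. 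By uniqueness of invariant measures on homogeneous spaces, $\mu=c\operatorname{Vol}$ for some constant $c\ge 0$.

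\textbf{Step 3 (computing $c$).} Let $f_T=\mathbf 1_{x^T\mathcal O_\infty^n}$, so that $\operatorname{Vol}(x^T\mathcal O_\infty^n)=q^{nT}$. Fix $\Lambda=g\mathcal R^n$. Once $T$ is large compared with the successive minima of $\Lambda$, a reduced basis shows that $\Lambda\cap x^T\mathcal O_\infty^n$ is an $\mathbb F_q$-space of dimension exactly $n(T+1)$, using $|\det g|=1$. The model case is $\mathcal R^n$, where each coordinate ranges over the polynomials of degree $\le T$, giving $q^{T+1}$ choices per coordinate. Therefore
\[
\widetilde{f_T}(\Lambda)/q^{nT}=\bigl(q^{n(T+1)}-1\bigr)/q^{nT}\longrightarrow q^n
\]
pointwise in $\Lambda$. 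The domination $\widetilde{f_T}(\Lambda)/q^{nT}\le C\alpha(\Lambda)$, uniformly in $T$, comes from the same estimate as in Step 1. Dominated convergence then gives
\[
c=\lim_{T\to\infty}\mu(f_T)/q^{nT}=q^n,
\]
which is the formula in Theorem \ref{thm:Siegel}.
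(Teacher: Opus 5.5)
Your argument follows the paper's proof. Integrability of $\widetilde f$ comes from $\widetilde f\le C\alpha$ and $\alpha\in L^1(m_{\mathcal L_n})$ (the paper's Lemma~\ref{cor:LattPointCnt} and Theorem~\ref{lem:alphaMoment}). This is followed by Riesz--Markov with uniqueness of $\mathrm G$-invariant measures, and then the scaling limit with $\mathbf 1_{x^T\mathcal O_\infty^n}$.

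Your Step 3 is more economical than the paper's. From $\#(\Lambda\cap x^T\mathcal O_\infty^n)=\prod_i\lceil q^{T+1}/\lambda_i(\Lambda)\rceil$ one gets $\widetilde{f_T}(\Lambda)/q^{nT}\le q^{j(T+1)-nT}\alpha(\Lambda)\le q^n\alpha(\Lambda)$ for every $T\ge 0$, where $j=\#\{i:\lambda_i(\Lambda)<q^{T+1}\}$. So dominated convergence with $\alpha\in L^1$ alone suffices. The paper instead splits $\mathcal L_n$ according to whether $\lambda_n\le q^{m+1}$, and uses integrability of $\alpha^{3/2}$ together with Mahler's criterion.

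\textbf{The step you leave out is the passage across the origin.} You identify $\mu(f)=\int\widetilde f\,dm_{\mathcal L_n}$ with $c\operatorname{Vol}$ only on $C_c(\mathcal K_\infty^n\setminus\{0\})$. You then evaluate it on $f_T=\mathbf 1_{x^T\mathcal O_\infty^n}$, which is not supported away from $0$. Likewise, the theorem concerns $f$ with possibly $f(0)\neq 0$.

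On $C_c(\mathcal K_\infty^n)$ the functional is a priori $c_0\delta_0+c_1\operatorname{Vol}$ (Lemma~\ref{lem:SL_n(K_infty)InvMeas}). The remark that $\widetilde f$ does not see $f(0)$ does not by itself exclude the atom, which could come from the values of $f$ near $0$. The paper settles this by proving $c_0=0$ with the test functions $h_m=\mathbf 1_{x^{-m}\mathcal O_\infty^n}$.

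In your setup a line of monotone convergence suffices. For $f\ge 0$ let $f_m=f\cdot\mathbf 1_{\{\|\mathbf v\|>q^{-m}\}}$, which lies in $C_c(\mathcal K_\infty^n\setminus\{0\})$ because $\{\|\mathbf v\|>q^{-m}\}$ is clopen. Since $f_m(\mathbf v)\nearrow f(\mathbf v)$ for every $\mathbf v\neq 0$, you get $\widetilde{f_m}\nearrow\widetilde f$ pointwise on $\mathcal L_n$ and $\int f_m\,d\operatorname{Vol}\to\int f\,d\operatorname{Vol}$. Hence $\int\widetilde f\,dm_{\mathcal L_n}=c\int f\,d\operatorname{Vol}$ for all such $f$, in particular for $f_T$. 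With this added, your proof is complete.
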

For a fixed set $B$, define a random variable $X_B=\#B\cap (\Lambda\setminus\{0\})$, where $\Lambda\in \mathcal{L}_n$. Theorem \ref{thm:Siegel} states from a probabilistic point of view that $\mathbb{E}[X_B]=q^n\operatorname{Vol}(B)$, and 
$$\operatorname{Var}(X_B)=\int_{\mathcal{L}_n}\widetilde{\mathbf{1}}_B^2dm_{\mathcal{L}_n}-q^{2n}\operatorname{Vol}(B)^2.$$ 
To deduce Theorem \ref{thm:N_QCount} from Theorem \ref{thm:VolEstimate}, we need the following analogue of Rogers' second moment formula \cite{Rogers1955} pertaining to the variance of $X_B$. 
\begin{theorem}
\label{thm:Rogers}
Let $n\ge 3$ and let $q$ be a prime power.
    For every measurable set $B\subseteq \mathcal{K}_{\infty}^n$, we have
    \begin{equation*}
        \operatorname{Var}(X_B)\le C_n\operatorname{Vol}(B),
    \end{equation*}
    where
    \begin{equation}
    \label{eqn:C_n}
    C_n=q^n(2q-1)(1-q^{-1})\frac{q^{n-2}}{q^{n-2}-1}.
    \end{equation}
\end{theorem}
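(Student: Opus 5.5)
Throughout, $f=\mathbf 1_B$ and we may assume $\operatorname{Vol}(B)<\infty$; otherwise there is nothing to prove.

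The plan is to prove a function field analogue of Rogers' second moment formula and then bound each term. Expand
\[
\widetilde f(\Lambda)^2=\sum_{\mathbf v,\mathbf w\in\Lambda\setminus\{0\}}f(\mathbf v)f(\mathbf w),
\]
and split the pairs $(\mathbf v,\mathbf w)$ according to whether they are linearly independent over $\mathcal K$.

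\emph{Independent pairs.} Every linearly independent pair is $g(\mathbf a,\mathbf b)$ for some pair $(\mathbf a,\mathbf b)$ of independent vectors in $\mathcal R^n$. These pairs fall into $\Gamma$-orbits, indexed by the Hermite normal forms of the $n\times 2$ matrix, i.e.\ by the sublattice of $\mathcal R^2$ they generate. Unfolding as in the proof of Theorem~\ref{thm:Siegel}, each orbit contributes a constant times $\left(\int f\right)^2$. Summing over the orbits should give exactly $q^{2n}\operatorname{Vol}(B)^2$, since $\operatorname{vol}(\mathcal L_2)$-type zeta factors cancel for $n\ge3$, just as in Rogers' classical formula. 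So this part cancels the squared mean in $\operatorname{Var}(X_B)$. To justify the constant, I would compare with the primitive case $\Gamma/\mathrm{Stab}$ and apply Theorem~\ref{thm:Siegel} twice via a fibration of $\mathcal L_n$ over the choice of the first vector.

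\emph{Dependent pairs.} These are the pairs $\mathbf w=\frac{a}{b}\mathbf v$. Writing $\mathbf v=c\mathbf u$ with $\mathbf u$ primitive, the contribution is
\[
\sum_{\mathbf u\ \text{primitive}}\ \sum_{c,d\in\mathcal R\setminus\{0\}} f(c\mathbf u)f(d\mathbf u).
\]
Integrating with the primitive Siegel formula gives
\[
\frac{q^n}{\zeta_{\mathcal R}(n)}\sum_{\substack{a,b\in\mathcal R\setminus\{0\}\\ \gcd(a,b)=1}}\int f(a\mathbf x)f(b\mathbf x)\,d\mathbf x,
\]
up to normalizing monic/unit choices. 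Here
\[
\zeta_{\mathcal R}(n)=\sum_{a\ \text{monic}}|a|^{-n}=\frac{1}{1-q^{1-n}}.
\]
The essential bound is $\int f(a\mathbf x)f(b\mathbf x)\,d\mathbf x\le \max(|a|,|b|)^{-n}\operatorname{Vol}(B)$, obtained by a change of variables in the larger coefficient and the bound $f(b\mathbf x)\le1$.

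It then remains to bound the arithmetic sum
\[
\sum_{\gcd(a,b)=1}\max(|a|,|b|)^{-n},
\]
taken over pairs up to the unit group $\mathbb F_q^\times$. Count the coprime pairs with $\max(\deg a,\deg b)=k$ exactly. The pairs where one entry is a unit give the $k=0$ term, namely $(q-1)$ times the trivial pairs $\mathbf w=\alpha\mathbf v$, $\alpha\in\mathbb F_q^\times$. For $k\ge1$, the number of coprime pairs of degree $\le k$ is about $(q-1)^2q^{2k}(1-q^{-1})$, because the coprimality density for polynomials is $1-1/q$. This gives a geometric series
\[
\sum_k q^{2k}q^{-nk}\cdots,
\]
which should produce the factor $\frac{q^{n-2}}{q^{n-2}-1}$. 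Combining it with $(2q-1)(1-q^{-1})$ should reproduce the constant $C_n$ in \eqref{eqn:C_n}.

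The main obstacle is getting exact constants. This means the bookkeeping of units, the $\zeta$ normalization, and the factor $2q-1$, which presumably comes from counting pairs with $|a|=|b|$ versus $|a|\ne|b|$. There is also the justification of the independent-pair cancellation. For the latter, I would first try the $\Gamma$-orbit decomposition of pairs, together with the known covolume of $\SL_2(\mathcal R)$ and the function field zeta values, and verify the identity on balls, where Proposition~\ref{lem:BallErr}-type exact counts are available.
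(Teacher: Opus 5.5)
Your route is the paper's: the $k=2$ moment formula (Lemma~\ref{lem:Rogers2nd}), whose independent-pair term cancels $q^{2n}\operatorname{Vol}(B)^2$, then the bound $\int f(a\mathbf x)f(b\mathbf x)\,d\mathbf x\le \max(|a|,|b|)^{-n}\operatorname{Vol}(B)$, then a totient count. That bound is exactly the paper's case split \eqref{eqn:VolBND} combined with the symmetry \eqref{eqn:symmetry}. For the independent pairs the paper needs neither Hermite normal forms nor $\SL_2(\mathcal R)$-covolumes. For $n\ge 3$ the independent pairs form a single $\mathrm{G}$-orbit, so the functional is $c\cdot\operatorname{Vol}$ there, and $c=q^{2n}$ is read off from large balls (Theorem~\ref{thm:rPart}). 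Your fibration argument could also work, but as written it is only asserted. The more serious gap is elsewhere. Once the second moment formula is granted, the entire content of the theorem is the constant. Your proposal does not carry that part out, and the pieces you do state are wrong.

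\textbf{Normalization.} The primitive Siegel formula gives $\frac{q^n}{\zeta_{\mathcal R}(n)}\sum_{c,d\neq 0}\int f(c\mathbf x)f(d\mathbf x)\,d\mathbf x$ over \emph{all} nonzero pairs. Pulling out the monic $\gcd$ produces exactly a factor $\zeta_{\mathcal R}(n)$. Each dependent pair also arises from $q-1$ primitive vectors $\mathbf u$. So the dependent term is
\[
\frac{q^n}{q-1}\sum_{\substack{a,b\neq 0\\ \gcd(a,b)=1}}\int f(a\mathbf x)f(b\mathbf x)\,d\mathbf x ,
\]
with no $1/\zeta_{\mathcal R}(n)$ remaining. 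Keeping it, as you wrote, makes the formula wrong by a factor $1-q^{1-n}$.

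\textbf{Count.} For $k\ge 1$, the coprime nonzero pairs with $\max(\deg a,\deg b)=k$ number $(q-1)\bigl(2+(q-1)\bigr)\sum_{\deg Q=k}\varphi(Q)=(q^2-1)(1-q^{-1})q^{2k}$, not $(q-1)^2(1-q^{-1})q^{2k}$. The factor $2$ comes from the two unequal-degree configurations, and $q-1$ from the leading coefficients in the equal-degree one. For $k=0$ there are $(q-1)^2$ pairs. This yields
\[
\operatorname{Var}(X_B)\le q^n\Bigl[(q-1)+\frac{(q+1)(1-q^{-1})}{q^{n-2}-1}\Bigr]\operatorname{Vol}(B).
\]
This is $\le C_n\operatorname{Vol}(B)$ term by term, since $q-1\le (2q-1)(1-q^{-1})$ and $q+1\le 2q-1$. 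It is an equality for balls centred at $0$.

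So a correct count does not reproduce $C_n$, and $2q-1$ does not come from separating $|a|=|b|$ from $|a|\neq|b|$. In the paper it arises from two sources. First, the unequal-degree terms are weighted by $q$ after the swap, but $(Q,cA)\mapsto(A,c^{-1}Q)$ is a bijection, so the correct weight is $2$. Second, $\sum_{\deg Q=d}\varphi(Q)=q^{2d}(1-q^{-1})$ is also used at $d=0$, where there are only $q-1$ terms. Both are overcounts and harmless for the inequality. What you should do is prove the displayed bound and then compare it with $C_n$, rather than aim for equality with $C_n$.
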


We note that the upper bound $C_n$ is optimal, by considering the case where $B$ is a ball $B(\alpha, r)$ with $\|\alpha\|\le r$. More precisely, one can compute the exact value of $\operatorname{Var}(X_B)$ when $B$ is a ball.
\begin{proposition}
\label{lem:BallErr}
    Let $\alpha\in \mathcal{K}_{\infty}^n$ and let $r>0$. Then, we have
    $$\operatorname{Var}(X_{B(\alpha,r)})=\frac{q^{2n-2}}{q^{n-2}-1}(1-q^{-1})r^n\cdot \begin{cases}
            2q-1&r\geq \Vert \alpha\Vert\\
            \left(\frac{r}{\Vert \alpha\Vert}\right)^{n-1}&r<\Vert \alpha\Vert
        \end{cases}.$$
\end{proposition}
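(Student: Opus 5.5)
The plan is to reduce the variance to a sum over pairs of linearly dependent lattice vectors, as in Rogers' second moment formula, and then evaluate that sum exactly using the geometry of ultrametric balls.

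\emph{Step 1: a second moment formula.} For $f=\mathbf 1_{B(\alpha,r)}$, split $\widetilde f(\Lambda)^2=\sum_{\mathbf v,\mathbf w\in\Lambda\setminus\{0\}}f(\mathbf v)f(\mathbf w)$ into two kinds of pairs.
\begin{itemize}
\item Pairs with $\mathbf v,\mathbf w$ linearly independent. For $n\ge 3$, $\mathrm G$ acts transitively on primitive pairs, so the same unfolding argument as in Theorem~\ref{thm:Siegel} gives $\bigl(q^n\int f\bigr)^2$. This cancels exactly against $\mathbb E[X_B]^2$.
\item Pairs with $\mathbf v,\mathbf w$ dependent. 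Write $\mathbf v=a\mathbf p$ and $\mathbf w=b\mathbf p$ with $\mathbf p$ primitive, $a,b\in\mathcal R\setminus\{0\}$ and $\gcd(a,b)=1$. Sum over the $\mathcal R$-line through $\mathbf p$, and pass from primitive vectors to all vectors using $\zeta_{\mathcal R}(n)=\sum_{c \text{ monic}}|c|^{-n}=(1-q^{1-n})^{-1}$.
\end{itemize}
The expected outcome is
\[
\operatorname{Var}(X_B)=\frac{q^n}{\zeta_{\mathcal R}(n)}\sum_{\substack{a,b\in\mathcal R\setminus\{0\}\\ \gcd(a,b)=1}}\int_{\mathcal K_\infty^n} f(a\mathbf v)f(b\mathbf v)\,d\operatorname{Vol}(\mathbf v).
\]
The main risk in the whole proof is getting this normalization right: the $q^n$ covolume factor, the unit group $\mathbb F_q^\times$, and the precise form of the zeta factor. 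I would fix the constant by first testing the formula on a ball $B(0,r)$ with $r<1$. For such $r$, Minkowski's theorem in the form $\lambda_1\ge 1$ is not available for every unimodular lattice, so instead I would compare with a direct computation for $n=3$, or check consistency with the claimed final constant.

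\emph{Step 2: the centered case, $r\ge\|\alpha\|$.} Here $B(\alpha,r)=B(0,r)$ by the ultrametric property. Then $a\mathbf v,b\mathbf v\in B$ if and only if $\|\mathbf v\|\le r/\max(|a|,|b|)$, so each integral equals $r^n\max(|a|,|b|)^{-n}$. It remains to count coprime pairs $(a,b)$ with $\max(\deg a,\deg b)=d$:
\begin{itemize}
\item Count all such pairs, then remove non-coprime pairs by Möbius inversion over monic gcds. Coprime pairs of polynomials of degree at most $d$ number about $q^{2d+2}(1-q^{-1})$ up to boundary terms.
\item Restrict to pairs of exact maximal degree $d$. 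The identity $\#\{(a,b)\text{ coprime}: \max\deg=d\}=(q-1)^2q^{2d}(1-q^{-1})\cdot(\text{correction})$ follows from the standard fact that coprimality of random polynomials has probability $1-q^{-1}$. I would treat the degree-$0$ terms separately, since these are the $(q-1)^2$ pairs of units.
\end{itemize}
Summing $q^{2d}\cdot q^{-nd}$ over $d\ge 0$ gives the factor $\frac{q^{n-2}}{q^{n-2}-1}$, and the unit terms together with the geometric tail should combine into the factor $(2q-1)$.

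\emph{Step 3: the off-center case, $r<\|\alpha\|$.} Now $a\mathbf v\in B(\alpha,r)$ forces $\|a\mathbf v\|=\|\alpha\|$. Requiring also $b\mathbf v\in B(\alpha,r)$ gives $\|(a-b)\mathbf v\|\le r$, hence $|a|=|b|$ and $|a-b|\le |a|\,r/\|\alpha\|$. For each admissible pair, the set of $\mathbf v$ is the intersection of two balls, which is the smaller ball $a^{-1}B(\alpha,r)$ when $|a-b|\,\|\alpha\|/|a|\le r$. Its volume is $r^n|a|^{-n}$.

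Counting coprime pairs with $|a|=|b|=q^d$ and $\deg(a-b)\le d+\log_q(r/\|\alpha\|)$ replaces one free coefficient range by a range shrunk by the factor $r/\|\alpha\|$. The diagonal $a=b$ is coprime only for units, which produces the $(r/\|\alpha\|)^{n-1}$ behaviour. Redoing the geometric sum then yields the second case of the formula.
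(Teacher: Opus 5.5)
You follow the same route as the paper: reduce to the dependent-pair sum, evaluate each term exactly with Lemma~\ref{lem:UMBalls}, and count coprime pairs. But both steps that carry the content are wrong or missing.

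\textbf{Step~1: the normalization is wrong.} Your parametrization is inconsistent: with $\mathbf p$ primitive \emph{and} $\gcd(a,b)=1$ you miss pairs such as $(x\mathbf p,x\mathbf p)$. There are two consistent choices:
\begin{itemize}
\item take $\gcd(a,b)=1$ and let $\mathbf p$ range over all of $\Lambda\setminus\{0\}$;
\item take $\mathbf p$ primitive and all nonzero $(a,b)$, after which the two factors of $\zeta_{\mathcal R}(n)$ cancel.
\end{itemize}
In either case you must divide by the ambiguity $(a,b,\mathbf p)\sim(\epsilon a,\epsilon b,\epsilon^{-1}\mathbf p)$, $\epsilon\in\mathbb F_q^\times$. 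This gives
\[
\operatorname{Var}(X_B)=\frac{q^n}{q-1}\sum_{\substack{a,b\neq 0\\ \gcd(a,b)=1}}\int f(a\mathbf v)f(b\mathbf v)\,d\mathbf v
=q^n\sum_{Q\in\Rmo}\sum_{\substack{a\neq 0\\ \gcd(a,Q)=1}}\int f(Q\mathbf v)f(a\mathbf v)\,d\mathbf v ,
\]
so your coefficient is off by the factor $(q-1)(1-q^{1-n})$. The second form is Lemma~\ref{lem:Rogers2nd}, and it needs no calibration. Nonzero dependent pairs in $\Lambda$ are exactly $(Q\mathbf u,a\mathbf u)$ with $Q\in\Rmo$, $\gcd(a,Q)=1$ and $\mathbf u\in\Lambda\setminus\{0\}$, each represented once, so Theorem~\ref{thm:Siegel} applies term by term. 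Fixing the constant by ``consistency with the claimed final constant'' would be circular.

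\textbf{Step~2: the count does not give $(2q-1)$.} You only assert that the terms ``should combine into $(2q-1)$'', and they do not. The ordered coprime pairs of nonzero polynomials with $\max(\deg a,\deg b)=d$ number $(q-1)^2$ for $d=0$ and $(q^2-1)(1-q^{-1})q^{2d}$ for $d\ge 1$. Hence
\[
\operatorname{Var}(X_{B(0,r)})=q^nr^n\Bigl[(q-1)+\frac{(q+1)(1-q^{-1})}{q^{n-2}-1}\Bigr]=\frac{q^n(q^{n-1}+1)}{q^{n-2}-1}(1-q^{-1})\,r^n .
\]
For instance, this is $90r^3$ rather than $135r^3$ when $q=n=3$. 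The paper's $(2q-1)$ comes from \eqref{eqn:sumVOlBND}, which has two slips:
\begin{itemize}
\item The terms with $\deg a>\deg Q$ are in weight-preserving bijection with those with $\deg a<\deg Q$ via $(Q,ca')\mapsto(a',c^{-1}Q)$, so they contribute a factor $2$, not $q$.
\item The $d=0$ term should be $q-1$ (only $a\in\mathbb F_q^\times$), because the identity $\sum_{\deg Q=d}\varphi(Q)=q^{2d}(1-q^{-1})$ fails at $d=0$.
\end{itemize}

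\textbf{Step~3: the diagonal is misattributed.} The diagonal pairs $a=b\in\mathbb F_q^\times$ do not produce the $(r/\|\alpha\|)^{n-1}$ behaviour. They give $\int f(a\mathbf v)^2\,d\mathbf v=\operatorname{Vol}(B)$, i.e.\ the term $q^n\operatorname{Vol}(B)=\mathbb E[X_B]$, which is present for every $B$. The decaying part comes from pairs with $\deg a=\deg b=d\ge t$ and $0<|a-b|\le q^{d-t}$, where $q^t=\|\alpha\|/r$. There are $(q-1)^2q^{2d-t}$ such coprime pairs, so for $r<\|\alpha\|$
\[
\operatorname{Var}(X_{B(\alpha,r)})=q^nr^n\Bigl[1+\frac{(q-1)q^{n-2}}{q^{n-2}-1}\Bigl(\frac{r}{\|\alpha\|}\Bigr)^{n-1}\Bigr].
\]
Since $\operatorname{Var}(X_B)\ge\mathbb E[X_B]$ always, no formula that tends to $0$ as $\|\alpha\|\to\infty$ can be correct. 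The paper's proof loses this term by asserting that the count of admissible $a$ vanishes for $\deg Q<t$, which overlooks $a=Q=1$. Its last line also has $q^{2n-1}$ where the statement has $q^{2n-2}$.

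\textbf{Consequence.} Your plan, carried out correctly, refutes the displayed formula in both cases rather than proving it. Theorem~\ref{thm:Rogers} still holds as an upper bound, since $q^{n-1}+1<(2q-1)q^{n-2}$, but that bound is never attained.
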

 In the real setting, it is easy to compute
\[
\operatorname{Var}\left(X_{B(0,r)}\right)=\left(4\frac {\zeta(d-1)} {\zeta(d)} - 2\right)\vol(B(0,r))
\]
for balls centered at the origin (see for instance the computation in the proof of \cite[Theorem 2.2]{AM2009}). However, it seems difficult to find the exact variance for balls not centered at the origin. 
\vspace{0.1in}
An important component of the proof of Theorem \ref{thm:Siegel} and Theorem \ref{thm:Rogers} is proving that the Margulis-$\alpha$ function has bounded moments. This can be viewed as a function field analogue of \cite[Lemma 3.10]{EMM}. 
\begin{definition}[Margulis $\alpha$-function]
    Let $0\leq k\leq n$. For a $k$-sublattice $\Delta$ of $\Lambda$, define $L(\Delta)=\Vert \mathbf{v}_1\wedge\dots\wedge\mathbf{v}_k\Vert$, where $\Delta=\mathcal{R}\mathbf{v}_1+\dots+\mathcal{R}\mathbf{v}_k$, and $\mathbf{v}_1,\dots,\mathbf{v}_k\in\mathcal{K}^n_{\infty}$ are linearly independent. Note that $L(\Delta)$ represents the $q^{k}$-multiple of the covolume of $\Delta\subseteq \mathcal K_\infty.\Delta$.  Define
    $$\alpha_k(\Lambda)=\max\left\{\frac{1}{L(\Delta)}:\Delta\text{ is a }k\text{-sublattice of }\Lambda\right\}.$$
    Define the \emph{Margulis $\alpha$-function} by
    $$\alpha(\Lambda)=\max_{k=0,\dots,n}\alpha_k(\Lambda).$$
\end{definition}
\begin{theorem}
\label{lem:alphaMoment}
    For every $1\leq m  < n$, the $m$-th moment of $\alpha$ is finite, i.e.,
    $$\int_{\mathcal{L}_n}\alpha^mdm_{\mathcal{L}_n}<\infty.$$
\end{theorem}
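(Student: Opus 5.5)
The plan is to reduce $\alpha$ to the successive minima of $\Lambda$ and then integrate over a Siegel-type fundamental domain for $\Gamma=\SL_n(\mathcal R)$ in $\mathrm G$, using the reduction theory available over $\mathcal R=\mathbb F_q[x]$.

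\textbf{Step 1: reduction to successive minima.} For $\Lambda\in\mathcal L_n$ let $\lambda_1\le\dots\le\lambda_n$ be its successive minima with respect to $\Vert\cdot\Vert$. In the ultrametric setting, Mahler's theory gives a basis $\mathbf v_1,\dots,\mathbf v_n$ of $\Lambda$ with $\Vert\mathbf v_i\Vert=\lambda_i$ that is orthogonal in the ultrametric sense. For such a basis
\[
\Vert \mathbf v_{i_1}\wedge\cdots\wedge\mathbf v_{i_k}\Vert=\prod_j\Vert\mathbf v_{i_j}\Vert
\quad\text{and}\quad
\prod_{i=1}^n\lambda_i=\text{const}.
\]
Every $k$-sublattice $\Delta$ then satisfies $L(\Delta)\ge \lambda_1\cdots\lambda_k$, since the wedge of any $k$ vectors of $\Lambda$ expands in the orthogonal basis with leading term bounded below in this way. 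Hence
\[
\alpha_k(\Lambda)\asymp (\lambda_1\cdots\lambda_k)^{-1},\qquad \alpha(\Lambda)\ll\max_k(\lambda_1\cdots\lambda_k)^{-1},
\]
and it suffices to bound $\int \max_k(\lambda_1\cdots\lambda_k)^{-m}\,dm_{\mathcal L_n}$.

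\textbf{Step 2: the Siegel domain and Haar measure.} Over $\mathcal K_\infty$ the fundamental domain is essentially $\mathcal S=K\,A^+\,N$ with $K=\SL_n(\mathcal O_\infty)$ compact, $N$ unipotent upper-triangular with entries in $\mathfrak m$ (compact), and
\[
A^+=\{a=\operatorname{diag}(x^{s_1},\dots,x^{s_n}):s_1\le\dots\le s_n,\ \textstyle\sum s_i=0\}.
\]
This is the standard result that $\SL_n(\mathcal O_\infty)\backslash \mathrm G/\Gamma$ is parametrized by $A^+$ (Weil/Harder), in the form that covers $\mathcal L_n$ with bounded multiplicity. For $g=kan$ we have $\lambda_i\asymp q^{s_i}$, and Haar measure restricted to $\mathcal S$ is, up to constants, the counting measure on $A^+$ weighted by $\prod_{i<j}q^{s_i-s_j}=q^{-2\rho(s)}$. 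Here the modular factor comes from the $a$-conjugation of $N(\mathcal O_\infty)$, and the stabilizers $\Gamma\cap aNa^{-1}$ grow compatibly. This measure computation is the step I expect to be the main obstacle. Precisely, the volume of the set of lattices with minima $(q^{s_1},\dots,q^{s_n})$ must be shown to be $\asymp\prod_{i<j}q^{-(s_j-s_i)}$, uniformly up to a constant and with bounded overcounting. I would try first to verify it by counting, via the formula $m_{\mathcal L_n}(\{\Lambda\})\propto 1/|\mathrm{Stab}|$ for the discrete set of $K$-orbits. Concretely, the $\Gamma$-stabilizer structure $\operatorname{Aut}$ of the graded lattice $\bigoplus x^{s_i}\mathcal R$ has size $\asymp q^{\sum_{i<j}(s_j-s_i)}$, counting $\mathcal R$-polynomials of degree $\le s_j-s_i$.

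\textbf{Step 3: the lattice-point sum.} Setting $d_i=s_{i+1}-s_i\ge0$, we have $(\lambda_1\cdots\lambda_k)^{-1}\asymp q^{-(s_1+\dots+s_k)}$. Moreover $-(s_1+\dots+s_k)=\sum_i c_{k,i}d_i$, where $c_{k,i}=i(n-k)/n$ for $i\le k$ and $k(n-i)/n$ for $i\ge k$. Also
\[
2\rho(s)=\sum_{i<j}(s_j-s_i)=\sum_i i(n-i)\,d_i .
\]
The integral is therefore dominated by
\[
\sum_k\sum_{d\in\mathbb Z_{\ge0}^{n-1}}q^{\sum_i (m c_{k,i}-i(n-i))d_i}.
\]
This converges iff $m c_{k,i}<i(n-i)$ for all $i,k$. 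The worst case is $c_{k,i}\le \min(i,n-i)\cdot\max(\cdot)/n$; checking, for $i\le k$ we get $m\,i(n-k)/n<i(n-i)$, which holds since $m(n-k)/n\le m(n-i)/n<n-i$ when $m<n$. The case $i\ge k$ is symmetric. This gives finiteness exactly for $m<n$, proving Theorem~\ref{lem:alphaMoment}.
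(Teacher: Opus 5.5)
Your proposal is correct and follows the same route as the paper. Both arguments write $\alpha$ through the successive minima and use that $\SL_n(\mathcal O_\infty)A_0$ (your $KA^+$; the $N(\mathfrak m)$ factor is redundant, since $aN(\mathfrak m)a^{-1}\subseteq K$ for $a\in A^+$) covers $\mathcal L_n$. They weight each $a=\operatorname{diag}(x^{s_1},\dots,x^{s_n})$ by $q^{-\sum_{i<j}(s_j-s_i)}$, which your stabilizer count $\#(K\cap a\Gamma a^{-1})\asymp q^{\sum_{i<j}(s_j-s_i)}$ validly justifies, and then sum geometric series in the gaps $s_{i+1}-s_i$. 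Your exponent bookkeeping, which gives convergence exactly when $mc_{k,i}<i(n-i)$, is the accurate version of the paper's computation; the paper's displayed exponent adds the modular factor where it should subtract it.
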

\begin{remark}
Theorem \ref{thm:Siegel}, Theorem \ref{thm:Rogers} and Lemma \ref{lem:BallErr} hold over all positive characteristics, whereas we assume that $q$ is the power of an odd prime in Theorem~\ref{thm:N_QCount}.
    When the ground field has characteristic $2$, a quadratic form does not correspond to a bilinear form, since $2$ is not invertible, which makes Theorem \ref{volume estimate} fail in this case; as does Theorem \ref{thm:N_QCount}.  
\end{remark}
\subsection{Structure of the Paper} The paper is organized as follows: In \cref{sec:GON}, we discuss subgroups of $\mathrm{G}$ in depth and prove Theorem \ref{lem:alphaMoment}. We then use the results of \cref{sec:GON} to prove Theorem \ref{thm:Siegel} and Theorem \ref{thm:Rogers} in \cref{sec:SiegelRogers}. In \cref{sec:Oppenheim}, we prove Theorem \ref{thm:VolEstimate} and apply Theorems \ref{thm:Siegel} and \ref{thm:Rogers} to prove Theorem \ref{thm:N_QCount}. The proof of Theorem \ref{thm:VolEstimate} relies on Theorem \ref{thm: Witt}, which we prove in \cref{sec:Witt}. 
\subsection{Acknowledgements}
The authors thank the organizers of the conference at York "Diophantine approximation and related fields" for organizing the conference where this project began due to the talk of the first named author. JH is supported by the National Research Foundation of Korea (NRF) grant funded by the Korean government (MSIT) (RS-2025-00515082; RS-2026-25478488). NSA is supported by a NAWI-Mitteln Start-Paket granted to new faculty members by NAWI Graz. 
\section{Bounded Moments of the Margulis-$\alpha$ Function and Lattice Point Counting}
\label{sec:GON}
In this section, we provide results about lattice point counting and geometry of numbers in $\mathcal{K}_{\infty}^n$, which are necessary to prove Theorem \ref{lem:alphaMoment}. Moreover, these results enable us to deduce Theorems \ref{thm:Siegel} and \ref{thm:Rogers} from Theorem \ref{lem:alphaMoment}. For more extensive surveys about geometry of numbers in the function field setting, see \cite{Mah,BT,Weil,RW,KST,A}. We first define some groups in $\mathrm{G}$ and their corresponding Haar measures.

Let $A<\mathrm{G}$ be the group of diagonal matrices and let $N<\mathrm{G}$ be the group of upper triangular matrices with $1$'s on the diagonal. These groups $N$, $A$, and $\operatorname{SL}_n(\mathcal{O}_{\infty})$ support Haar measures which are denoted by $m_N$, $m_A$, and $m_{\mathcal{O}_{\infty}}$, respectively. Let $A(\mathcal{O}_{\infty})$ be the group of diagonal matrices whose diagonal elements belong to $\mathcal{O}_{\infty}$. It follows that $A$ is topologically isomorphic to $A(\mathcal{O}_{\infty})\times \mathbb{Z}_0^n$, where 
$$\mathbb{Z}_0^n=\left\{\mathbf k=(k_1,\dots,k_n)\in \mathbb{Z}^n:\sum_{i=1}^nk_i=0\right\},$$
through the map $(\mathbf{o},\mathbf{k})\mapsto \operatorname{diag}\{x^{k_1}o_1,\dots,x^{k_n}o_n\}=:\mathbf{o}\cdot x^{\mathbf{k}}$.
If we let the measure $m_A$ satisfy $m_A(A(\mathcal{O}_{\infty}))=1$, then it decomposes as follows \cite{Casselman}:
\begin{equation}\label{eqn:AmeaDecomp}
dm_A(\mathbf{a})=\prod_{i=1}^{n-1}\frac{da_i}{\vert a_i\vert^{2(i-n)}}=q^{-\sum_{i=1}^{n-1}2(i-n)k_i}dk_1\cdots dk_{n-1}do_1\cdots do_{n-1}.
\end{equation}

In $\mathrm{G}$, we have the Iwasawa decomposition \cite{BT,Casselman,A}, which states that $G=\operatorname{SL}_n(\mathcal{O})AN$. The decomposition of the invariant measure in $\mathcal{L}_n$ with respect to the Iwasawa decomposition is $dm_{\mathcal{L}_n}(o\mathbf{a}u\mathcal{R}^n)=dm_{\operatorname{SL}_n(\mathcal{O})}(o)dm_A(\mathbf{a})dm_N(u)$ \cite{Casselman,PR,BT}. Since $A\cong A(\mathcal{O}_{\infty})\times \mathbb{Z}_0^n$, by the Iwasawa decomposition, we may assume that $\mathbf{a}=\operatorname{diag}\left\{x^{k_1},\dots,x^{k_n}\right\}$. Thus, by \eqref{eqn:AmeaDecomp}, we have
\begin{equation}\label{eqn:AmeaDecompNewCoors}
dm_A(\mathbf{a})=q^{-2\sum_{i=1}^{n-1}(i-n)k_i}dk_1\cdots dk_{n-1}.
\end{equation}
If $\Lambda=g\mathcal{R}^n$, a convenient fundamental domain of $\Lambda$ is $g\mathfrak{m}^n$ and thus $\operatorname{covol}(\Lambda)=q^{-n}\vert \det(g)\vert$. For the sake of simplicity, let us denote $$\det(\Lambda)=\vert \det(g)\vert.$$ Mahler \cite[Equations (24) and (25)]{Mah} proved the following version of Minkowski's 2nd theorem.
\begin{theorem}
    \label{thm:Mink2nd}

    Let $\Lambda=g\mathcal{R}^n\subseteq \mathcal{K}_{\infty}^n$ be a lattice, where $g\in \operatorname{GL}_n(\mathcal{K}_{\infty})$. For $i=1,\dots,n$, define the $i$-th successive minima of $\Lambda$ by
    $$\lambda_i(\Lambda)=\min\{r\geq 0:\operatorname{rk}\left(\Lambda\cap B(0,r)\right)\geq i\}.$$
    Then we have
    \begin{equation}
        \prod_{i=1}^n\lambda_i(\Lambda)=\det(\Lambda).
    \end{equation}
\end{theorem}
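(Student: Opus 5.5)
The plan is to prove the equality by constructing a basis of $\Lambda$ that realizes the successive minima and is ``orthogonal'' in the ultrametric sense. Its determinant can then be read off directly. Throughout, $\det(\Lambda)=|\det(g)|$ equals $|\det[\mathbf{w}_1|\cdots|\mathbf{w}_n]|$ for any $\mathcal{R}$-basis $\mathbf{w}_1,\dots,\mathbf{w}_n$ of $\Lambda$, because two bases differ by an element of $\operatorname{GL}_n(\mathcal{R})$, whose determinant lies in $\mathbb{F}_q^\times$ and so has absolute value $1$.

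\emph{Step 1 (greedy choice).} Since $\Lambda$ is discrete and all norms are integer powers of $q$, minima are attained. Choose $\mathbf{v}_1\in\Lambda\setminus\{0\}$ of minimal norm. Inductively, choose $\mathbf{v}_i\in\Lambda$ of minimal norm among the vectors not lying in $\mathcal{K}_\infty\mathbf{v}_1+\dots+\mathcal{K}_\infty\mathbf{v}_{i-1}$. A standard argument gives $\Vert\mathbf{v}_i\Vert=\lambda_i(\Lambda)$. Indeed, any $i$ independent vectors of norm $\le r$ cannot all lie in an $(i-1)$-dimensional span, which gives $\lambda_i\ge\Vert\mathbf{v}_i\Vert$, and the reverse inequality is clear.

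\emph{Step 2 (orthogonality via reduction).} Write $\Vert\mathbf{v}_i\Vert=q^{d_i}$ and set $\mathbf{u}_i=x^{-d_i}\mathbf{v}_i\in\mathcal{O}_\infty^n$, so that $\Vert\mathbf{u}_i\Vert=1$. Let $\bar{\mathbf{u}}_i\in\mathbb{F}_q^n$ be the reduction modulo $\mathfrak{m}$. I claim the $\bar{\mathbf{u}}_i$ are linearly independent over $\mathbb{F}_q$. Suppose instead that $\sum\beta_i\bar{\mathbf{u}}_i=0$ with $\beta_i\in\mathbb{F}_q$ not all zero, and let $j$ be the largest index with $\beta_j\ne0$. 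Put $\mathbf{w}=\sum_{i\le j}\beta_i x^{d_j-d_i}\mathbf{v}_i$. Since $d_i\le d_j$, the coefficients lie in $\mathcal{R}$, so $\mathbf{w}\in\Lambda$. Moreover $\mathbf{w}=x^{d_j}\sum\beta_i\mathbf{u}_i$ has norm $<q^{d_j}$. But $\mathbf{w}$ lies outside $\operatorname{span}(\mathbf{v}_1,\dots,\mathbf{v}_{j-1})$ because $\beta_j\neq0$. This contradicts the choice of $\mathbf{v}_j$. Consequently the matrix $U=[\mathbf{u}_1|\cdots|\mathbf{u}_n]\in M_n(\mathcal{O}_\infty)$ has nonsingular reduction, so $\det U\in\mathcal{O}_\infty^\times$ and $|\det U|=1$. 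The same fact yields orthogonality: $\Vert\sum a_i\mathbf{v}_i\Vert=\max_i|a_i|\Vert\mathbf{v}_i\Vert$ for all $a_i\in\mathcal{K}_\infty$. To see this, rescale by the maximal $|a_i|q^{d_i}$; the resulting combination of the $\mathbf{u}_i$ has coefficients in $\mathcal{O}_\infty$, not all in $\mathfrak{m}$, so its reduction is nonzero.

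\emph{Step 3 (basis).} Let $\mathbf{w}\in\Lambda$ and write $\mathbf{w}=\sum c_i\mathbf{v}_i$ with $c_i\in\mathcal{K}_\infty$. Split each $c_i=p_i+f_i$ into its polynomial part $p_i\in\mathcal{R}$ and its fractional part $f_i\in\mathfrak{m}$. Then $\mathbf{r}=\sum f_i\mathbf{v}_i\in\Lambda$. If $\mathbf{r}\ne0$, let $j$ be the largest index with $f_j\ne0$. By orthogonality, $\Vert\mathbf{r}\Vert=\max_i|f_i|q^{d_i}<q^{d_j}$, since $d_i\le d_j$ for all $i\le j$. Yet $\mathbf{r}\notin\operatorname{span}(\mathbf{v}_1,\dots,\mathbf{v}_{j-1})$, contradicting minimality of $\mathbf{v}_j$. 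Hence $\mathbf{r}=0$, and the $\mathbf{v}_i$ form an $\mathcal{R}$-basis of $\Lambda$.

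\emph{Conclusion.} Combining the steps,
\[
\det(\Lambda)=\bigl|\det[\mathbf{v}_1|\cdots|\mathbf{v}_n]\bigr|=\prod_i q^{d_i}\,|\det U|=\prod_i\lambda_i(\Lambda).
\]
The main obstacle is Step 2, the reduction-mod-$\mathfrak{m}$ independence argument. Everything else, including the basis property, follows from it formally.
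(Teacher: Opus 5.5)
Your proof is correct and complete. The paper does not prove this statement; it quotes it from Mahler \cite{Mah}. Later, in the lemma on translates of convex bodies, the paper takes from \cite[Theorem 1.21]{A} an $\mathcal{R}$-basis with $\Vert\mathbf{v}_i\Vert=\lambda_i(\Lambda)$, and combines it with the product formula to conclude that this basis is orthogonal.

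Your argument runs the logic in the opposite direction. You get the key fact, namely the $\mathbb{F}_q$-independence of the reductions $\bar{\mathbf{u}}_i$ (equivalently $|\det U|=1$), directly from the greedy minimality. Orthogonality, the basis property and the product formula then all follow from it. So your self-contained route also recovers both facts the paper borrows from \cite{A}, at no extra cost.

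Two minor points:
\begin{enumerate}
\item You should state the monotonicity $d_1\le\cdots\le d_n$ that you use in Steps 2 and 3. It is immediate from $\Vert\mathbf{v}_i\Vert=\lambda_i(\Lambda)$ and the obvious monotonicity of the $\lambda_i$.
\item Step 3 does not need full orthogonality. The ultrametric inequality $\Vert\mathbf{r}\Vert\le\max_i|f_i|\,\Vert\mathbf{v}_i\Vert$ already gives $\Vert\mathbf{r}\Vert<q^{d_j}$.
\end{enumerate}
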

Due to Theorem \ref{thm:Mink2nd}, the Margulis-$\alpha$ function has the following form:
\begin{equation}
\label{eqn:alphaFuncForm}
\alpha(\Lambda)=\prod_{i=1}^k\frac{1}{\lambda_i(\Lambda)}.\end{equation}
Moreover, using the successive minima $\lambda_1(\Lambda),\dots,\lambda_n(\Lambda)$, the second named author \cite{A} used Theorem \ref{thm:Mink2nd} to refine the Iwasawa decomposition for $\mathcal{L}_n$.
\begin{theorem}{\cite[Theorem 1.24]{A}}
\label{thm:LattKAN}
    Let $\Lambda\in \mathcal{L}_n$. Then there exists $o\in \operatorname{SL}_n(\mathcal{O})$, such that $\Lambda=o\operatorname{diag}\left\{x^{\log_q\lambda_1(\Lambda)},\dots,x^{\log_q\lambda_n(\Lambda)}\right\}\mathcal{R}^n$.
\end{theorem}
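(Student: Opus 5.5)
We need to prove Theorem \ref{thm:LattKAN}: every $\Lambda\in\mathcal L_n$ can be written as $o\,\mathrm{diag}\{x^{\log_q\lambda_1},\dots,x^{\log_q\lambda_n}\}\mathcal R^n$ with $o\in\SL_n(\mathcal O_\infty)$. The plan is to construct a reduced basis $\mathbf v_1,\dots,\mathbf v_n$ of $\Lambda$ with $\Vert\mathbf v_i\Vert=\lambda_i(\Lambda)$, and then show that after rescaling each $\mathbf v_i$ by $x^{-\log_q\lambda_i}$ the resulting matrix lies in $\GL_n(\mathcal O_\infty)$ with unit determinant. Note that all norms lie in $q^{\mathbb Z}$, so $\log_q\lambda_i\in\mathbb Z$ and the diagonal matrix makes sense.

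\textbf{Step 1: a basis realizing the successive minima.} I will choose $\mathbf v_1,\dots,\mathbf v_n\in\Lambda$ greedily. Let $\mathbf v_i$ be a vector of minimal norm among lattice vectors not in $\mathcal K_\infty\mathbf v_1+\dots+\mathcal K_\infty\mathbf v_{i-1}$. By discreteness the minimum is attained, and $\Vert\mathbf v_i\Vert=\lambda_i$ by definition of successive minima.

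The key point is that these vectors form an $\mathcal R$-basis of $\Lambda$, and not just a basis of a finite-index sublattice; I expect this to be the main obstacle. I would first prove the following ultrametric \emph{orthogonality} property: for all $a_i\in\mathcal K_\infty$,
\[
\Big\Vert\sum a_i\mathbf v_i\Big\Vert=\max_i |a_i|\,\Vert\mathbf v_i\Vert.
\]
With this in hand, take any $\mathbf w\in\Lambda$ and write $\mathbf w=\sum a_i\mathbf v_i$. Splitting $a_i=[a_i]+\{a_i\}$ into a polynomial part in $\mathcal R$ and a part in $\mathfrak m$, the vector $\mathbf w'=\sum\{a_i\}\mathbf v_i$ lies in $\Lambda$. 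If some $\{a_i\}\neq0$, let $j$ be the largest such index. Then $\mathbf w'\notin\mathrm{span}(\mathbf v_1,\dots,\mathbf v_{j-1})$, and orthogonality gives $\Vert\mathbf w'\Vert\le\max_{i\le j}q^{-1}\lambda_i<\lambda_j$. This contradicts the minimality in the choice of $\mathbf v_j$, so all fractional parts vanish and the $\mathbf v_i$ form a basis.

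\textbf{Proving orthogonality.} I would argue by induction on $i$. Suppose the identity holds for $\mathbf v_1,\dots,\mathbf v_{i-1}$ but fails at stage $i$. Then some combination $\mathbf u=\mathbf v_i+\sum_{j<i}b_j\mathbf v_j$ has $\Vert\mathbf u\Vert<\lambda_i$ (after normalizing the coefficient of $\mathbf v_i$). Replacing each $b_j$ by its polynomial part $[b_j]$ changes $\mathbf u$ by a vector of norm at most $q^{-1}\max_{j<i}\lambda_j\le q^{-1}\lambda_i$. The resulting lattice vector is still outside the span of $\mathbf v_1,\dots,\mathbf v_{i-1}$ and has norm $<\lambda_i$, contradicting minimality.

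The normalization step needs care when the offending coefficient is not a unit multiple. I would handle this by scaling by $x^{k}$ and using that absolute values are powers of $q$. If that becomes messy, a cleaner alternative is to reduce the first $i-1$ coordinates of $\mathbf u$ modulo the already-orthogonal sublattice, in the style of a Gram–Schmidt reduction.

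\textbf{Step 2: conclusion.} Let $g=[\mathbf v_1|\cdots|\mathbf v_n]$ and $d=\mathrm{diag}\{x^{\log_q\lambda_i}\}$, and set $o=gd^{-1}$. The columns of $o$ have norm $1$, so $o\in M_n(\mathcal O_\infty)$. Orthogonality shows that the reductions of these columns modulo $\mathfrak m$ are linearly independent over $\mathbb F_q$: a nontrivial $\mathbb F_q$-relation would give a combination of norm less than $1$, contradicting the orthogonality identity. Hence $|\det o|=1$ and $o\in\GL_n(\mathcal O_\infty)$.

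Since $g\mathcal R^n=\Lambda$ is unimodular, $|\det g|=1$, which matches Theorem \ref{thm:Mink2nd}: $\prod\lambda_i=1$, so $d\in\mathrm G$. Finally, to land in $\SL_n(\mathcal O_\infty)$ in the paper's normalization $\mathrm G=\{|\det|=1\}$, $o\in\GL_n(\mathcal O_\infty)$ already has $|\det o|=1$. If a genuine determinant-one matrix is wanted instead, I would multiply $\mathbf v_1$ by a unit of $\mathbb F_q^\times\subset\mathcal R^\times$ and absorb the remaining unit factor using the $A(\mathcal O_\infty)$ freedom. This yields $\Lambda=od\,\mathcal R^n$ as claimed.
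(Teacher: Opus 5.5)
Your argument is correct, but it is organized differently from what the paper relies on. The paper does not prove this statement; it cites \cite[Theorem 1.24]{A}. The surrounding text (the proof of the lemma on translates of convex bodies) shows the intended ingredients. An $\mathcal R$-basis with $\Vert\mathbf v_i\Vert=\lambda_i(\Lambda)$ is taken from \cite[Theorem 1.21]{A}. Its orthogonality is then deduced from Mahler's identity $\prod_i\lambda_i(\Lambda)=\det(\Lambda)$ (Theorem~\ref{thm:Mink2nd}): for columns of norm $1$, the condition $|\det|=1$ is equivalent to linear independence of their reductions modulo $\mathfrak m$.

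You run the logic in the opposite direction. The greedy choice, together with rounding coefficients to their polynomial parts, yields both the basis property and orthogonality directly. Minkowski's second theorem then falls out as $|\det g|=|\det o|\,|\det d|=\prod_i\lambda_i$. So your route is self-contained and elementary (Theorem~\ref{thm:Mink2nd} is only a consistency check in your Step 2), while the paper's is shorter but entirely by citation.

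The normalization you worry about is a non-issue. If the identity fails at stage $i$, the inductive hypothesis and the strict ultrametric inequality force $a_i\neq 0$ with the maximum equal to $|a_i|\lambda_i$. Dividing by $a_i\in\mathcal K_\infty^\times$ is then harmless, because your rounding step works for arbitrary $b_j\in\mathcal K_\infty$. Likewise, the bound on $\Vert\mathbf w'\Vert$ in the basis step needs only the ultrametric inequality.

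The one thing to drop is the last sentence about absorbing a unit factor via $A(\mathcal O_\infty)$. A diagonal $u\in A(\mathcal O_\infty)$ does not preserve $\mathcal R^n$ in general, and replacing $d$ by $u^{-1}d$ would destroy the required form $\mathrm{diag}\{x^{\log_q\lambda_i}\}$. It is also unnecessary:
\begin{itemize}
\item With the paper's convention $|\det|=1$, you are already done.
\item If $\Lambda=g_0\mathcal R^n$ with $\det g_0=1$, then $\det d=x^{\sum_i\log_q\lambda_i}=1$ and $\det o=\det[\mathbf v_1|\cdots|\mathbf v_n]\in\det g_0\cdot\mathcal R^\times=\mathbb F_q^\times$. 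Rescaling $\mathbf v_1$ by an element of $\mathbb F_q^\times$ alone gives $\det o=1$.
\end{itemize}
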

Another application of Theorem \ref{thm:Mink2nd} is counting lattice points in a convex body. Recall that Mahler proved that all convex bodies in this setting are of the form $h\mathcal{O}_{\infty}^n$, where $h\in \GL_d(\mathcal K_\infty)$ \cite{Mah}. We utilize the following lattice point counting claim from \cite{BK}.
\begin{lemma}{\cite[Lemma 6.2]{BK}}
\label{lem:BK}
    For every lattice $\Lambda$ and for every convex body $\mathcal{C}=h\mathcal{O}_{\infty}^n$, we have
    $$\#\Lambda\cap \mathcal{C}=\prod_{i=1}^n\left\lceil \frac{q}{\lambda_i(h^{-1}\Lambda)}\right\rceil,$$
    where $\lceil t\rceil=\min\{n\in \mathbb{Z}:n\geq t\}$. In particular, if $\lambda_{n}(h^{-1}\Lambda)<q$, we have
    $$\#\Lambda\cap \mathcal{C}=\frac{q^n\operatorname{Vol}(\mathcal{C})}{\operatorname{covol}(\Lambda)}.$$
\end{lemma}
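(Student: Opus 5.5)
The plan is to reduce to the unit ball, put the lattice in a ``diagonal'' position with respect to the sup-norm, and then count coordinatewise.

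\emph{Reduction.} Since $h$ is a bijection, $\mathbf v\mapsto h^{-1}\mathbf v$ maps $\Lambda\cap h\mathcal O_\infty^n$ bijectively onto $\Lambda'\cap\mathcal O_\infty^n$, where $\Lambda':=h^{-1}\Lambda$. So it suffices to prove
\[
\#\Lambda'\cap\mathcal O_\infty^n=\prod_{i=1}^n\left\lceil \frac{q}{\lambda_i(\Lambda')}\right\rceil
\]
for an arbitrary lattice $\Lambda'=g\mathcal R^n$ with $g\in\GL_n(\mathcal K_\infty)$.

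\emph{Diagonal normal form.} Next I would bring $\Lambda'$ into the form of Theorem~\ref{thm:LattKAN}, without the unimodularity assumption. I want $o\in\GL_n(\mathcal O_\infty)$ with
\[
\Lambda'=o\,\operatorname{diag}\{x^{k_1},\dots,x^{k_n}\}\mathcal R^n,\qquad q^{k_i}=\lambda_i(\Lambda').
\]
Equivalently, I want a basis $\mathbf v_1,\dots,\mathbf v_n$ with $\|\mathbf v_i\|=\lambda_i(\Lambda')$ that is orthogonal in Mahler's sense:
\[
\Big\|\sum a_i\mathbf v_i\Big\|=\max_i|a_i|\,\|\mathbf v_i\| .
\]
This is Mahler's reduction theory, and the argument behind Theorem~\ref{thm:LattKAN} goes through verbatim in $\GL_n$. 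One uses the Iwasawa decomposition $\GL_n(\mathcal K_\infty)=\GL_n(\mathcal O_\infty)AN$, reduces the unipotent part modulo $N(\mathcal R)$ so that its off-diagonal entries lie in $\mathfrak m$, and reorders the diagonal. Minkowski's second theorem (Theorem~\ref{thm:Mink2nd}) then identifies the diagonal exponents with $\log_q\lambda_i$. Establishing this orthogonal basis for general (non-unimodular) lattices is the step I expect to need the most care. I would first try to quote Mahler directly, and failing that, rerun the proof of \cite[Theorem 1.24]{A}.

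\emph{Counting.} Elements of $\mathrm{GL}_n(\mathcal O_\infty)$ preserve the norm, so
\[
\Lambda'\cap\mathcal O_\infty^n\ \longleftrightarrow\ \{\mathbf a\in\mathcal R^n: |a_i|q^{k_i}\le 1\ \text{for all } i\}.
\]
This set is a product of the sets $\{a\in\mathcal R:\deg a\le -k_i\}$. Each factor has $q^{1-k_i}=q/\lambda_i$ elements when $k_i\le 0$, and contains only $0$ when $k_i\ge1$. Since $\lambda_i$ is a power of $q$, in both cases the factor has $\lceil q/\lambda_i\rceil$ elements, which proves the formula.

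\emph{The special case.} If $\lambda_n(\Lambda')<q$, then every $\lambda_i\le 1<q$, so each factor equals $q/\lambda_i$. By Theorem~\ref{thm:Mink2nd},
\[
\#\Lambda\cap\mathcal C=\frac{q^n}{\prod_i\lambda_i(\Lambda')}=\frac{q^n}{\det(h^{-1}\Lambda)}=\frac{q^n\,|\det h|}{\det\Lambda}.
\]
Finally, I would rewrite this using $\operatorname{Vol}(\mathcal C)=|\det h|$ and the covolume normalization $\operatorname{covol}(\Lambda)=q^{-n}\det(\Lambda)$ fixed earlier. With these values the expression equals $\operatorname{Vol}(\mathcal C)/\operatorname{covol}(\Lambda)$, which carries no factor $q^n$. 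So the displayed formula $q^n\operatorname{Vol}(\mathcal C)/\operatorname{covol}(\Lambda)$ matches only if $\operatorname{covol}(\Lambda)$ there means $\det(\Lambda)$. I would resolve this normalization when writing the proof, since I cannot yet reconcile the extra $q^n$ with the convention $\operatorname{covol}(\Lambda)=q^{-n}\det(\Lambda)$.
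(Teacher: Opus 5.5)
Your strategy is correct: pull back by $h$, take a basis of $h^{-1}\Lambda$ that is orthogonal and realizes the successive minima, then count coordinatewise. It is the same argument the paper uses in the proof of the next lemma, on translates $B_{\mathcal C}(\mathbf v,q^t)$. For this statement itself the paper gives no proof and only cites \cite[Lemma 6.2]{BK}. I have two comments.

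\textbf{The normal form.} Quoting Mahler is fine, but your fallback sketch would not work as written. The diagonal part of an Iwasawa decomposition need not be the vector of successive minima, even after reducing $u$ modulo $N(\mathcal R)$ and reordering.

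For example, take $n=2$, $a=\operatorname{diag}\{x^{5},x^{-5}\}$, and $u$ the unipotent matrix with $(1,2)$-entry $x^{-1}\in\mathfrak m$. The lattice $au\mathcal R^2$ is spanned by $(x^5,0)$ and $(x^4,x^{-5})$. It contains $x(x^4,x^{-5})-(x^5,0)=(0,x^{-4})$, and it has $\lambda_1=q^{-4}$, $\lambda_2=q^{4}$ rather than $q^{\mp 5}$. Permuting the diagonal destroys the triangularity of $u$. Also, Minkowski's second theorem only fixes $\prod_i\lambda_i$, so it cannot identify the individual exponents.

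A short self-contained route uses Theorem~\ref{thm:Mink2nd}, which is stated for all $g\in\mathrm{GL}_n(\mathcal K_\infty)$. Choose linearly independent $\mathbf v_1,\dots,\mathbf v_n\in\Lambda'$ with $\Vert\mathbf v_i\Vert=\lambda_i(\Lambda')=q^{k_i}$, and put $\Lambda_0=\sum_i\mathcal R\mathbf v_i$. By the ultrametric Hadamard inequality,
\[
\#(\Lambda'/\Lambda_0)\cdot\det(\Lambda')=\det(\Lambda_0)=\left|\det(\mathbf v_1,\dots,\mathbf v_n)\right|\le\prod_{i=1}^n\Vert\mathbf v_i\Vert=\prod_{i=1}^n\lambda_i(\Lambda')=\det(\Lambda').
\]
Hence $\Lambda_0=\Lambda'$, and Hadamard is an equality. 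The equality says that the matrix with columns $x^{-k_i}\mathbf v_i$ lies in $M_n(\mathcal O_\infty)$ with unit determinant, hence lies in $\mathrm{GL}_n(\mathcal O_\infty)$. This is exactly the orthogonality your counting step needs. With that in place, your count and the ceiling bookkeeping are correct.

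\textbf{The normalization.} Your suspicion is justified, and there is nothing for you to reconcile. Under the paper's own convention $\operatorname{covol}(\Lambda)=q^{-n}\det(\Lambda)$, the displayed special case is off by a factor $q^n$. For $\Lambda=\mathcal R^n$ and $h=\mathrm{Id}$, the left side is $\#\mathbb F_q^n=q^n$, whereas $q^n\operatorname{Vol}(\mathcal O_\infty^n)/\operatorname{covol}(\mathcal R^n)=q^{2n}$.

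What is true, and what you proved, is
\[
\#\Lambda\cap\mathcal C=\frac{q^n\operatorname{Vol}(\mathcal C)}{\det(\Lambda)}=\frac{\operatorname{Vol}(\mathcal C)}{\operatorname{covol}(\Lambda)}.
\]
This is also the form the paper actually uses later. Examples are $\#\Lambda\cap B(0,q^t)=q^{(t+1)n}/\det(\Lambda)$ in the proof of Lemma~\ref{cor:LattPointCnt}, and $\widetilde{f_m}(\Lambda)=q^n-q^{-mn}$ in the proof of Theorem~\ref{thm:Siegel}. State the special case in this form. Your argument even gives it under the weaker hypothesis $\lambda_n(h^{-1}\Lambda)\le q$.
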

We need the following generalization of Lemma \ref{lem:BK} for translates of convex bodies. A convex body $\mathcal{C}$ induces a norm on $\mathcal{K}_{\infty}^n$ defined by
$$\Vert \mathbf{v}\Vert_{\mathcal{C}}=\inf\left\{r\geq 0:\mathbf{v}\in x^{\log_q(r)}\mathcal{C}\right\}.$$
Define $B_{\mathcal{C}}(\mathbf{v},q^r)=\left\{\mathbf{u}\in \mathcal{K}_{\infty}^n:\Vert \mathbf{u}-\mathbf{v}\Vert_{\mathcal{C}}\leq q^r\right\}.$
\begin{lemma}
    Let $\mathbf{v}\in \mathcal{K}_{\infty}^n$ and let $\mathcal{C}=h\mathcal{O}_{\infty}^n$ be a convex body. Let $\Lambda\in \mathcal{L}_n$. There exists $t_0$, depending on $\Lambda$, $\mathbf{v}$, and $\mathcal{C}$, such that for every $t\geq t_0$, we have 
    $$\#\Lambda\cap B_{\mathcal{C}}(\mathbf{v},q^t)=\prod_{i=1}^n\left\lceil\frac{q^{t+1}}{\lambda_i(h^{-1}\Lambda)}\right\rceil.$$
\end{lemma}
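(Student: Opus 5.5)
The plan is to reduce the translated count to the centered count in Lemma~\ref{lem:BK}, using the ultrametric structure: for $t$ large, the translated ball $B_{\mathcal{C}}(\mathbf{v},q^t)$ will contain a lattice point. In that case it is itself a ball centered at a lattice point, and translating by that lattice point does not change the count.

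First I will normalize. Since $B_{\mathcal{C}}(\mathbf{v},q^t)=h\,B(h^{-1}\mathbf{v},q^t)$ in the standard norm (interpreting $\|\cdot\|_{\mathcal{C}}$ as $\|h^{-1}\cdot\|$, with $\mathcal{C}=h\mathcal{O}_\infty^n$), we get
\[
\#\Lambda\cap B_{\mathcal{C}}(\mathbf{v},q^t)=\#(h^{-1}\Lambda)\cap B(h^{-1}\mathbf{v},q^t).
\]
So it suffices to treat $\mathcal{C}=\mathcal{O}_\infty^n$ and the lattice $\Lambda'=h^{-1}\Lambda$, which is a lattice in $\mathcal{K}_\infty^n$, though not necessarily unimodular. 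This is harmless because Lemma~\ref{lem:BK} is stated for every lattice.

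Next I will choose $t_0$ so that the ball contains a lattice point. Let $\mathbf{w}=h^{-1}\mathbf{v}$. Using Theorem~\ref{thm:LattKAN} (or simply a reduced basis $\mathbf{b}_1,\dots,\mathbf{b}_n$ of $\Lambda'$ realizing the successive minima, which exists by Theorem~\ref{thm:Mink2nd}), the points of $\Lambda'$ come within distance $\lambda_n(\Lambda')$ of every point of $\mathcal{K}_\infty^n$. Concretely, writing $\mathbf{w}=\sum c_i\mathbf{b}_i$ and rounding each $c_i$ to its polynomial part $[c_i]\in\mathcal{R}$ gives
\[
\Big\|\mathbf{w}-\sum[c_i]\mathbf{b}_i\Big\|\le \max_i q^{-1}\|\mathbf{b}_i\|\le \lambda_n(\Lambda').
\]
The ultrametric inequality is used here; if the basis is not orthogonal in the ultrametric sense, I will instead use the $o\,\mathbf{a}$ form from Theorem~\ref{thm:LattKAN}, where $o\in\SL_n(\mathcal{O}_\infty)$ is an isometry. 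Hence for $q^t\ge\lambda_n(\Lambda')$ there is $\mathbf{u}\in\Lambda'$ with $\|\mathbf{u}-\mathbf{w}\|\le q^t$. Since ultrametric balls are centered at any of their points, $B(\mathbf{w},q^t)=B(\mathbf{u},q^t)$, and therefore
\[
\#\Lambda'\cap B(\mathbf{w},q^t)=\#(\Lambda'-\mathbf{u})\cap B(0,q^t)=\#\Lambda'\cap x^t\mathcal{O}_\infty^n.
\]

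Finally I apply Lemma~\ref{lem:BK} with convex body $x^t\mathcal{O}_\infty^n$, that is, with $h$ replaced by $x^t I$. The successive minima of $x^{-t}\Lambda'$ are $q^{-t}\lambda_i(\Lambda')$, so the count equals
\[
\prod_i\left\lceil \frac{q^{t+1}}{\lambda_i(h^{-1}\Lambda)}\right\rceil,
\]
which is the claim with $t_0=\log_q\lambda_n(h^{-1}\Lambda)$.

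The main point needing care is the covering-radius step, namely that every point lies within $\lambda_n$ of the lattice. I expect to justify it through the $o\,\mathbf{a}\,\mathcal{R}^n$ description: after applying the isometry $o^{-1}$, the lattice is a diagonal lattice $\prod_i x^{k_i}\mathcal{R}$, and coordinatewise polynomial-part rounding gives error at most $q^{k_i-1}\le\lambda_n$.
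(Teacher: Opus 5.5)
Your proof is correct. It differs from the paper's in where you recenter the ball, and that choice buys a stronger conclusion.

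\textbf{The paper's route.} It expands $\mathbf v=\sum c_i\mathbf v_i$ in an orthogonal basis realizing the successive minima, obtained from \cite[Theorem 1.21]{A} and Theorem~\ref{thm:Mink2nd}. It then takes $t$ with $q^t>|c_i|\lambda_i$ for all $i$. This is equivalent to $q^t>\|\mathbf v\|$, so $B(\mathbf v,q^t)=B(0,q^t)$. Finally it counts coefficient vectors with $|a_i|\le q^t/\lambda_i$ directly. Its $t_0$ genuinely depends on $\mathbf v$.

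\textbf{Your route.} You recenter at a lattice point within the covering radius and use $\Lambda'-\mathbf u=\Lambda'$. This gives $t_0=\log_q\lambda_n(h^{-1}\Lambda)$, independent of $\mathbf v$, so you prove a version of the lemma that is uniform in the center. More generally, your argument shows that for every $t$ the off-center count is either $0$ or equal to the centered count. That observation would simplify the off-center case in the proof of Lemma~\ref{cor:LattPointCnt}\eqref{lem:SchmidtLem}: it gives $\#\Lambda\cap B(\mathbf u,q^t)\le\#\Lambda\cap B(0,q^t)$ directly, with no dependence on $\|\mathbf u\|$.

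\textbf{Two small corrections.}
\begin{enumerate}
\item The covering step needs no orthogonality. For any $\mathcal R$-basis $\mathbf b_i$ of $\Lambda'$, the ultrametric inequality alone gives $\|\sum_i (c_i-[c_i])\mathbf b_i\|\le q^{-1}\max_i\|\mathbf b_i\|$. So your fallback to Theorem~\ref{thm:LattKAN} is unnecessary. It would also not apply directly, since that theorem is stated only for unimodular lattices and $h^{-1}\Lambda$ need not be unimodular.
\item A basis with $\|\mathbf b_i\|=\lambda_i$ exists by \cite[Theorem 1.21]{A}, not by Theorem~\ref{thm:Mink2nd}. You only need it to get the explicit value $\lambda_n$ in $t_0$. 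With such a basis your bound improves to a covering radius of $q^{-1}\lambda_n$.
\end{enumerate}
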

\begin{proof}
    Note that it suffices to prove this for $\mathcal{C}=\mathcal{O}_{\infty}^n$, since 
    \begin{equation}
        \Vert \mathbf{v}\Vert_{\mathcal{C}}=\inf\left\{r\geq 0:\mathbf{v}\in x^{\log_qr}h\mathcal{O}_{\infty}^n\right\}=\inf\left\{r\geq 0:h^{-1}\mathbf{v}\in x^{\log_qr}\mathcal{O}_{\infty}^n\right\}=\Vert h^{-1}\mathbf{v}\Vert.
    \end{equation}
    By \cite[Theorem 1.21]{A}, there exists an $\mathcal{R}$-basis $\{\mathbf v_1, \ldots, \mathbf v_n\}$ for $\Lambda$ such that 
    $$\Vert \mathbf{v}_i\Vert=\lambda_i(\Lambda),\quad i=1,\dots,n,$$
    where $\lambda_i(\Lambda)$ is the $i$-th successive minima. Moreover, by combining Theorem~\ref{thm:Mink2nd} and \cite[Theorem 1.21]{A}, we obtain that $\{\mathbf{v}_1,\dots,\mathbf{v}_n\}$ is an orthogonal $\mathcal{R}$-basis of $\Lambda$. Denote by $\mathbf v=\sum_{i=1}^n c_i\mathbf v_i$ ($c_i\in \mathcal K_\infty$). It follows that
    \[\begin{split}
    \mathbf u=\sum_{i=1}^n a_i\mathbf v_i\in \Lambda\cap B(\mathbf v, q^t)
    \quad &\Leftrightarrow\quad \left\|\sum_{i=1}^n (a_i-c_i)\mathbf v_i\right\|\le q^t\\
    &\Leftrightarrow\quad \max_{i=1,\ldots, n} |a_i-c_i|\cdot \lambda_i(\Lambda) \le q^t.
    \end{split}\]
    As a consequence, $\vert a_i-c_i\vert\leq \frac{q^t}{\lambda_i(\Lambda)}$ for every $i$. If $t$ is chosen so that $q^t>\vert c_i\vert\lambda_i(\Lambda)$ for every $i$, we have $\mathbf{u}\in B(\mathbf{v},q^t)$ if and only if $\vert a_i\vert\leq \frac{q^t}{\lambda_i(\Lambda)}$ for every $i$. In this case, we have
    $$\#\Lambda\cap B(\mathbf{v},q^t)=\#\left\{(a_1,\dots,a_n)\in \mathcal{R}^n:\vert a_i\vert\leq \frac{q^t}{\lambda_i(\Lambda)}\right\}=\prod_{i=1}^n\left\lceil\frac{q^{t+1}}{\lambda_i(\Lambda)}\right\rceil.$$
\end{proof}
To prove Theorem \ref{thm:Siegel} and Theorem \ref{thm:Rogers} from Theorem \ref{lem:alphaMoment}, we need a function field analogue of Schmidt's lemma, which states that $\widetilde{f}\ll \alpha$. 

To prove such an analogue, for an operation $*\in \{=,\geq ,>,\leq,<\}$ and $t\in \mathbb{Z}$, define 
     $$\mathcal{L}_n^{*t}=\{\Lambda\in \mathcal{L}_n:\lambda_n(\Lambda)*q^t\}.$$
     By Mahler's compactness criterion \cite[Theorem 1.1]{KST}, $\mathcal{L}_n^{\geq q^t}$ are increasing compact sets, and $\lim_{t\rightarrow \infty}m_{\mathcal{L}_n}\left(\mathcal{L}_n^{>q^t}\right)=0$. Moreover, \cite[Theorem 1.1]{KST} states that a closed set $\mathcal{C}\subseteq \mathcal{L}_n$ is compact if and only if there exists $c>0$ such that $\inf_{\Lambda\in \mathcal{C}}\lambda_1(\Lambda)\geq c$. 

     A function $f:\mathcal{K}_{\infty}^n\rightarrow\mathbb{C}$ is called smooth if there exists $\varepsilon>0$, such that $f$ is constant on every ball of radius $\varepsilon$. We denote the space of smooth functions $f:\mathcal{K}_{\infty}^n\rightarrow \mathbb{C}$ by $C^{\infty}(\mathcal{K}_{\infty}^n)$ and the space of smooth, compactly supported functions $f:\mathcal{K}_{\infty}^n\rightarrow \mathbb{C}$ by $C^{\infty}_c(\mathcal{K}_{\infty}^n)$.
\begin{lemma}[Schmidt's Lemma in Function Fields]\label{Function Field Schmidt Lemma}
\label{cor:LattPointCnt}
    For any $t\in \mathbb{N}$ and for any lattice $\Lambda\subseteq \mathcal{K}_{\infty}^n$, we have
    \begin{enumerate}
        \item \label{cor:LattPointCntComp} $\#\Lambda\cap B(0,q^t)=q^{n(t+1)}$, if $\Lambda\in \mathcal{L}_n^{\leq q^{t+1}}$;
        \item \label{cor:LattPointCntCusp} $\#\Lambda\cap B(0,q^t)\leq q^{n(t+1)}\alpha(\Lambda)$, if $\Lambda\in \mathcal{L}_n^{>q^{t+1}}$.
        \item \label{lem:SchmidtLem} As a consequence, for any $f\in C_c^{\infty}(\mathcal{K}_{\infty}^n)$ there exists $C>0$ depending on $f$, such that for every $\Lambda\in \mathcal{L}_n$, we have 
    $$\widetilde{f}(\Lambda)\leq C\alpha(\Lambda).$$
    \end{enumerate}
\end{lemma}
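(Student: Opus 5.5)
The whole argument will go through Lemma \ref{lem:BK} with $h=x^t I_n$, so that $\mathcal{C}=B(0,q^t)=x^t\mathcal{O}_\infty^n$ and $h^{-1}\Lambda=x^{-t}\Lambda$, with $\lambda_i(x^{-t}\Lambda)=q^{-t}\lambda_i(\Lambda)$. Lemma \ref{lem:BK} then gives the exact count
\[
\#\Lambda\cap B(0,q^t)=\prod_{i=1}^n\left\lceil \frac{q^{t+1}}{\lambda_i(\Lambda)}\right\rceil .
\]
The first two parts come from estimating the factors of this product. The third part follows by dominating $f$ by a multiple of the indicator of a ball.

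\emph{Part (1).} Assume $\Lambda\in\mathcal{L}_n^{\le q^{t+1}}$, so $\lambda_n(\Lambda)\le q^{t+1}$. Each $\lambda_i(\Lambda)$ is a power of $q$, since norms take values in $q^{\mathbb Z}$. Therefore every ratio $q^{t+1}/\lambda_i(\Lambda)$ is a power of $q$ that is at least $1$, so it is an integer and the ceiling does nothing. The product then equals $q^{n(t+1)}/\prod_i\lambda_i(\Lambda)$. By Theorem \ref{thm:Mink2nd} and unimodularity, $\prod_i\lambda_i(\Lambda)=\det\Lambda=1$, which gives $q^{n(t+1)}$. This is also the ``in particular'' clause of Lemma \ref{lem:BK}, up to the boundary case $\lambda_n=q^{t+1}$, which the same computation covers.

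\emph{Part (2).} Assume $\lambda_n(\Lambda)>q^{t+1}$. Let $k$ be the largest index with $\lambda_k(\Lambda)\le q^{t+1}$; here $k<n$, and $k$ may be $0$. For $i\le k$ the factors are exactly $q^{t+1}/\lambda_i$, as in part (1). For $i>k$ we have $q^{t+1}/\lambda_i<1$, and since the ratio is positive, $\lceil q^{t+1}/\lambda_i\rceil=1$. So the count is
\[
\prod_{i=1}^k\frac{q^{t+1}}{\lambda_i(\Lambda)}=q^{k(t+1)}\prod_{i=1}^k\frac{1}{\lambda_i(\Lambda)}\le q^{n(t+1)}\alpha_k(\Lambda)\le q^{n(t+1)}\alpha(\Lambda).
\]
This uses $t\ge 0$ and $k\le n$, together with \eqref{eqn:alphaFuncForm}, which identifies $\alpha_k(\Lambda)=\prod_{i\le k}\lambda_i^{-1}$.

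This identification is the step I expect to need the most care. By Theorem \ref{thm:LattKAN} and the orthogonal basis from \cite[Theorem 1.21]{A}, the span $\Delta$ of the first $k$ minimal vectors has $L(\Delta)=\prod_{i\le k}\lambda_i$. That is the only inequality actually needed, namely $\alpha_k\ge 1/L(\Delta)$. In the ultrametric setting orthogonality makes the wedge norm exactly the product of the norms.

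\emph{Part (3).} Since $f$ is smooth with compact support, it is bounded and its support lies in some ball $B(0,q^t)$ with $t\in\mathbb{N}$. Hence $|f|\le \|f\|_\infty\mathbf 1_{B(0,q^t)}$, and so $|\widetilde f(\Lambda)|\le\|f\|_\infty\,\#\Lambda\cap B(0,q^t)$. By parts (1) and (2), this count is at most $q^{n(t+1)}\max\{1,\alpha(\Lambda)\}$. Since $\alpha_0(\Lambda)=1$, we have $\alpha(\Lambda)\ge 1$. This gives the bound with $C=\|f\|_\infty q^{n(t+1)}$.
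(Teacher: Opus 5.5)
Your proof is correct. For parts (1) and (2) it follows the paper: apply Lemma~\ref{lem:BK} to $B(0,q^t)=x^t\mathcal{O}_\infty^n$, note that the ceilings are exact for the indices with $\lambda_i(\Lambda)\le q^{t+1}$ and equal $1$ for the others, then use Theorem~\ref{thm:Mink2nd} in part (1) and the wedge of the first $k$ minimal vectors in part (2). Your version is a little more careful in two places.
\begin{itemize}
\item You allow $k=0$. The paper asserts that some $j\in\{1,\dots,n-1\}$ with $\lambda_j(\Lambda)\le q^{t+1}<\lambda_{j+1}(\Lambda)$ exists, which fails when $\lambda_1(\Lambda)>q^{t+1}$, although the bound is trivial in that case.
\item You justify $L(\Delta)=\prod_{i\le k}\lambda_i(\Lambda)$ through orthogonality of the basis, which the paper takes for granted.
\end{itemize}

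The real difference is in part (3). The paper reduces to indicators $\mathbf{1}_{B(\mathbf{u},q^t)}$ with arbitrary centres and splits into two cases:
\begin{itemize}
\item $\|\mathbf{u}\|\le q^t$, where the ball is centred at $0$;
\item $\|\mathbf{u}\|>q^t$, where the ball lies in the sphere $\|\mathbf{v}\|=\|\mathbf{u}\|$ and is bounded by counting in $B(0,\|\mathbf{u}\|)$.
\end{itemize}
It then sums over the finitely many balls that make up $f$. You instead use the single domination $|f|\le\|f\|_\infty\mathbf{1}_{B(0,q^t)}$ with $\supp f\subseteq B(0,q^t)$. This needs no case split and no decomposition of $f$, and it gives the explicit constant $C=\|f\|_\infty q^{n(t+1)}$.

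The paper's route gains nothing over yours. For one ball at distance $\|\mathbf{u}\|$ from $0$, its constant $q^n\|\mathbf{u}\|^n$ has the same order as yours. For general $f$ its constants add up over the balls. Your formulation also makes the uniformity invoked later in the proof of Theorem~\ref{thm:Siegel} immediate: for example, $\widetilde{f_m}\le q^n\alpha$ for $f_m=q^{-mn}\mathbf{1}_{x^m\mathcal{O}_\infty^n}$.
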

\begin{proof}
    If $\lambda_n(\Lambda)\leq q^{t+1}$, then, $\left\lceil\frac{q^{t+1}}{\lambda_i(\Lambda)}\right\rceil=\frac{q^{t+1}}{\lambda_i(\Lambda)}$ for every $i=1,\dots,n$. Hence, by Lemma \ref{lem:BK}, 
    \begin{equation}
    \label{eqn:LambdacapB(0,q^t)SmallLambda_n}
        \#\Lambda\cap B(0,q^t)=\prod_{i=1}^n\frac{q^{t+1}}{\lambda_i(\Lambda)}=\frac{q^{(t+1)n}}{\det(\Lambda)}=q^{(t+1)n}\leq q^{t(n+1)}\alpha(\Lambda).
    \end{equation}
    On the other hand, if $\lambda_n(\Lambda)>q^{t+1}$, then there exists some $j=1,\dots,n-1$, such that $\lambda_j(\Lambda)\leq q^{t+1}<\lambda_{j+1}(\Lambda)$. Let $\mathbf{v}_1,\dots,\mathbf{v}_n\in \Lambda$ be a linearly independent set of vectors satisfying $\Vert \mathbf{v}_i\Vert=\lambda_i(\Lambda)$. Then, by Theorem \ref{thm:Mink2nd} and Lemma \ref{lem:BK},
    \begin{equation}
    \label{eqn:LambdacapB(0,q^t)}
        \#\Lambda\cap B(0,q^t)=\prod_{i=1}^j\frac{q^{t+1}}{\lambda_i(\Lambda)}=\frac{q^{(t+1)j}}{\Vert \mathbf{v}_1\wedge \dots \wedge \mathbf{v}_j\Vert}\leq q^{n(t+1)}\alpha(\Lambda).
    \end{equation}
    For \eqref{lem:SchmidtLem}, it suffices to prove that $\widetilde{f}(\Lambda)\leq C\alpha(\Lambda)$ for $f=\mathbf{1}_{B(\mathbf{u},q^t)}$ for $t\in\mathbb{Z}$, since any $f\in C_c^{\infty}(\mathcal{K}_{\infty}^n)$ can be approximated by sums of finitely many indicator functions. Note that in this case, we have
    \begin{equation}
    \label{eqn:tilde(f)Count}
        \widetilde{f}(\Lambda)=\#\Lambda\cap B(\mathbf{u},q^t)=\#\{\mathbf{v}\in \Lambda:\Vert \mathbf{v}-\mathbf{u}\Vert\leq q^{t}\}.
    \end{equation}
    Note that if $\Vert \mathbf{u}\Vert\leq q^t$, then $\Vert \mathbf{v}-\mathbf{u}\Vert\leq q^t$ if and only if $\Vert \mathbf{v}\Vert\leq q^t$. Therefore, in this case, by Lemma \ref{lem:BK}, the right hand side of \eqref{eqn:tilde(f)Count} is equal to
    \begin{equation}
    \label{eqn:BallCntAlphaBND}
        \#\Lambda\cap B(0,q^t)=\prod_{i=1}^n\left\lceil\frac{q^{t+1}}{\lambda_i(\Lambda)}\right\rceil\leq q^{nt}\alpha(\Lambda).
    \end{equation}
    If $\Vert \mathbf{u}\Vert>q^t$, then $\Vert \mathbf{v}-\mathbf{u}\Vert\leq q^t$ implies that $\Vert \mathbf{v}\Vert=\Vert \mathbf{u}\Vert$. As a consequence,  \eqref{eqn:LambdacapB(0,q^t)SmallLambda_n}, \eqref{eqn:LambdacapB(0,q^t)}, and \eqref{eqn:tilde(f)Count} imply that
    \begin{equation*}
        \widetilde{f}(\Lambda)\leq \#\Lambda\cap \left(B(0,\Vert \mathbf{u}\Vert)\setminus B(0,q^{-1}\Vert \mathbf{u}\Vert)\right)=\prod_{i=1}^n\left\lceil \frac{q\Vert \mathbf{u}\Vert}{\lambda_i(\Lambda)}\right\rceil-\prod_{i=1}^n\left\lceil\frac{\Vert \mathbf{u}\Vert}{\lambda_i(\Lambda)}\right\rceil\leq q^n\Vert \mathbf{u}\Vert^n\alpha(\Lambda).
    \end{equation*}
\end{proof}
To conclude this section, we prove Theorem \ref{lem:alphaMoment}.
\begin{proof}[Proof of Theorem \ref{lem:alphaMoment}]
    Note that $\alpha$ is $\operatorname{SL}_n(\mathcal{O}_{\infty})$-invariant. Thus, by \eqref{eqn:alphaFuncForm} and Theorem \ref{thm:LattKAN}, for any $\Lambda\in \mathcal{L}_n$, we have
    \begin{equation}
    \label{eqn:alphaCompDiag}
    \alpha(\Lambda)=\alpha\left(\operatorname{diag}\{x^{\log_q\lambda_1(\Lambda)},\dots,x^{\log_q\lambda_n(\Lambda)}\}\mathcal{R}^n\right)=\max_{j=0,\dots,n}\prod_{1\leq i\leq j}\lambda_i(\Lambda)^{-1}.\end{equation}
    Let 
    $$A_0=\left\{a=\operatorname{diag}\{x^{k_1},\dots,x^{k_n}\}: k_i\leq k_{i+1},k_i\in \mathbb{Z},\,\,\,\forall i=1,\dots,n-1\right\}\subseteq A.$$
     By Theorem \ref{thm:LattKAN}, the set $\operatorname{SL}_n(\mathcal{O}_{\infty})A_0$ contains a fundamental domain for the right action of $\operatorname{SL}_n(\mathcal{R})$ on $\operatorname{SL}_n(\mathcal{K}_{\infty})$. 
    Thus, it suffices to compute
    \begin{equation}
    \label{eqn:IntDecAlpha}
    \begin{split}
    \int_{\mathcal{L}_n}\alpha_j^m\mathrm{d}m_{\mathcal{L}_n}&=\int_{\operatorname{SL}_n(\mathcal{O})A_0}\alpha_j^m(g\mathcal{R}^n)dm_{\mathrm{G}}(g)\\
 &=\int_{\operatorname{SL}_n(\mathcal{O}_{\infty})}\int_{A_0}\alpha_j^m(oa\mathcal{R}^n)\mathrm{d}m_{\operatorname{SL}_n(\mathcal{O})}(o)\mathrm{d}m_A(a)\\
    &\leq C\int_{A_{0}}\alpha_j^m(a\mathcal{R}^n)\mathrm{d}m_A(a),
    \end{split}
    \end{equation}
    where the last inequality stems from compactness of $\operatorname{SL}_n(\mathcal{O}_{\infty})$. Hence, by \eqref{eqn:AmeaDecompNewCoors}, it suffices to compute
    \begin{equation}
    \label{eqn:alphaAInteg}
        \int_{-\infty}^{\infty}\cdots\int_{-\infty}^{\infty}\frac{dk_1\dots d_{k_{n-1}}}{q^{\sum_{i=1}^jmk_i+2\sum_{i=1}^{n-1}(n-i)k_i}}.
    \end{equation}
    For $a=\operatorname{diag}\{x^{k_1},\dots,x^{k_n}\}\in A_0$, denote $\beta_i=k_{i+1}-k_i\geq 0$, so that $k_{i+1}=k_1+\sum_{j=1}^i\beta_j$. Then, $k_1=-\sum_{i=1}^{n-1}k_{i+1}=-(n-1)k_1-\sum_{i=1}^{n-1}\sum_{j=1}^i\beta_j$, so that $k_1=-\frac{1}{n}\sum_{j=1}^{n-1}(n-j)\beta_j$, and for every $i=1,\dots,n-1$, we have 
    \begin{equation}
    \begin{split}
    \label{eqn:k_i+1}
    k_{i+1}=-\frac{1}{n}\sum_{j=1}^{n-1}(n-j)\beta_j+\sum_{j=1}^i\beta_j=\frac{1}{n}\sum_{j=1}^ij\beta_j-\frac{1}{n}\sum_{j=i+1}^{n-1}(n-j)\beta_j.
    \end{split}
    \end{equation}
    Hence by \eqref{eqn:k_i+1}, the exponent of the denominator in \eqref{eqn:alphaAInteg} is equal to 
    \begin{equation}
    \begin{split}
    \label{eqn:denomAlphaIntExp}
        &\frac{1}{n}\left[\sum_{i=1}^jm\left(\sum_{\ell=1}^i\ell\beta_{\ell}-\sum_{\ell=i+1}^{n-1}(n-\ell)\beta_{\ell}\right)+\frac{2}{n}\sum_{i=1}^{n-1}(n-i)\left(\sum_{\ell=1}^i\ell\beta_{\ell}-\sum_{\ell=i+1}^{n-1}(n-\ell)\beta_{\ell}\right)\right]\\
        &=\frac{1}{n}m\left[\sum_{\ell=1}^j\beta_{\ell}\left(n+\ell(j+2-n)\right)-\sum_{\ell=j+1}^{n-1}(n-\ell)\beta_{\ell}\right]\\
        &\hspace{1.2in}+\frac{2}{n}\sum_{\ell=1}^{n-1}\beta_{\ell}\left[\ell\sum_{i=\ell}^{n-1}(n-i)-(n-\ell)\sum_{i=1}^{\ell-1}(n-i)\right]\\
        &=\frac{1}{n}m\left[\sum_{\ell=1}^j\beta_{\ell}\left(n+\ell(j+2-n)\right)-\sum_{\ell=j+1}^{n-1}(n-\ell)\beta_{\ell}\right]+\frac{1}{n}\sum_{\ell=1}^{n-1}\beta_{\ell}\left(3\ell^2+2(n-1)\ell+n^2\right)\\
        &=\frac{1}{n}\sum_{\ell=1}^j\beta_{\ell}\left(mn+m\ell(j+2-n)+3\ell^2+2(n-1)\ell+n^2\right)\\
        &\hspace{1.2in}+\frac{1}{n}\sum_{\ell=j+1}^{n-1}\beta_{\ell}\left(3\ell^2+(2n-2+m)\ell+n(n-m)\right).
    \end{split}
    \end{equation}
    Note that since $m\leq n-1$, then $3\ell^2+(2n-2+m)\ell+n(n-m)\geq 3\ell^2+n\geq 1$ for every $\ell=j+1,\dots,n-1$. Moreover, for $\ell=1,\dots,j$, the coefficient of $\beta_{\ell}$ in the second to last line of \eqref{eqn:denomAlphaIntExp} is equal to
    \begin{equation}\begin{split}\label{eqn:ellSmallExp}
    &mn(1-\ell)+m\ell(j+2)+2(n-1)\ell+n^2+3\ell^2\\
    &\hspace{0.5in}=m\ell(j-n)+mn+2m\ell+2(n-1)\ell+n^2+3\ell^2\\
    &\hspace{0.5in}\geq (n-1)(2\ell-m)+n^2+3\ell^2+2m\ell+mn\\
    &\hspace{0.5in}\geq (n-1)(2-m)+n^2+3\ell^2+2m\ell+mn\\
    &\hspace{0.5in}\geq n\left(n-m+2\right)+3\ell^2+2m\ell+mn>0.
    \end{split}\end{equation}
    As a consequence of \eqref{eqn:denomAlphaIntExp} and \eqref{eqn:ellSmallExp}, there exist $c_1,\dots,c_{n-1}\geq 1$, such that \eqref{eqn:alphaAInteg} is equal to 
    \begin{equation}
        \int_0^{\infty}\cdots \int_0^{\infty}\frac{d\beta_1\dots d\beta_{n-1}}{q^{\sum_{i=1}^n c_i\beta_i}}<\infty.
    \end{equation}
Thus, for every $j=1,\dots,n-1$, and for every $m=1,\dots, n-1$, the integral $\int_{\mathcal{L}_n}\alpha_j^mdm_{\mathcal{L}_n}$ converges. Since $\alpha^m(\Lambda)=\max_{j=1,\dots,n-1}\alpha_j^m(\Lambda)$, the integral $\int_{\mathcal{L}_n}\alpha^mdm_{\mathcal{L}_n}$ converges for every $m=1,\dots,n-1$.
\end{proof}

\section{Siegel's and Rogers' Formulae in Function Fields}
\label{sec:SiegelRogers}
In this section, we prove the first moment (Theorem~\ref{thm:Siegel}) and higher moment (Theorem~\ref{thm:kMomentsForm}) formulae. Moreover, we deduce the upper bound for the variance of the Siegel transform (Theorem \ref{thm:Rogers}) from the first and second moment formulae. 

\subsection{First Moment}
We begin with the following decomposition on the $\operatorname{SL}_n(\mathcal{K}_{\infty})$-invariant measures on $\mathcal K_\infty^n$.
\begin{lemma}
\label{lem:SL_n(K_infty)InvMeas}
    Let $\nu$ be an $\operatorname{SL}_n(\mathcal{K}_{\infty})$-invariant measure on $\mathcal{K}_{\infty}^n$. Then, there exist positive scalars $c_0,c_1\in [0,\infty)$ such that $\nu=c_0\delta_0+c_1\operatorname{Vol}$, where $\delta_0$ is the Dirac-delta mass at $0$.
\end{lemma}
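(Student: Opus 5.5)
The plan is to split $\nu$ as $\nu(\{0\})\delta_0+\nu|_{\mathcal{K}_{\infty}^n\setminus\{0\}}$ and then show that the restriction to $\mathcal{K}_{\infty}^n\setminus\{0\}$ is a multiple of $\operatorname{Vol}$. Throughout I assume, as is implicit in the statement, that $\nu$ is a Radon measure (finite on compact sets). Both pieces are $\operatorname{SL}_n(\mathcal{K}_{\infty})$-invariant, because $0$ is a fixed point of the action. So I set $c_0=\nu(\{0\})$ and only need to study the restriction, which I call $\nu'$.

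The key observation is that $\mathrm{G}=\operatorname{SL}_n(\mathcal{K}_{\infty})$ acts transitively on $\mathcal{K}_{\infty}^n\setminus\{0\}$ when $n\ge 2$. Indeed, any nonzero vector can be completed to a basis, and after rescaling one basis vector that basis gives a matrix of determinant $1$. The stabilizer of $e_1$ is
\[
H=\left\{\begin{pmatrix}1&*\\0&h\end{pmatrix}: h\in \operatorname{SL}_{n-1}(\mathcal{K}_{\infty})\right\}\cong \operatorname{SL}_{n-1}(\mathcal{K}_{\infty})\ltimes \mathcal{K}_{\infty}^{n-1}.
\]
This gives an identification $\mathcal{K}_{\infty}^n\setminus\{0\}\cong \mathrm{G}/H$, which is a homeomorphism because $\mathrm{G}$ is $\sigma$-compact and locally compact and the orbit map is open. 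Both $\mathrm{G}$ and $H$ are unimodular: $\mathrm{G}$ is generated by unipotents, and $H$ is a semidirect product of $\operatorname{SL}_{n-1}$ with a vector group on which it acts with determinant $1$. Hence $\mathrm{G}/H$ carries a $\mathrm{G}$-invariant Radon measure, and by the standard uniqueness theorem for invariant measures on homogeneous spaces (Weil) it is unique up to a scalar. Since $\operatorname{Vol}$ is $\operatorname{SL}_n$-invariant (we have $|\det g|=1$) and gives the point $\{0\}$ measure zero, its restriction is a nonzero invariant Radon measure on $\mathrm{G}/H$. Therefore $\nu'=c_1\operatorname{Vol}$ for some $c_1\ge 0$.

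The step I expect to need the most care is invoking uniqueness, which requires $\nu'$ to be Radon on $\mathrm{G}/H$. This is immediate, since compact subsets of $\mathcal{K}_{\infty}^n\setminus\{0\}$ are compact in $\mathcal{K}_{\infty}^n$. If one prefers to avoid the general theorem, I would give a direct ultrametric argument instead. Cover $\mathcal{K}_{\infty}^n\setminus\{0\}$ by the shells $\{\Vert\mathbf v\Vert=q^k\}$. Each shell is a finite disjoint union of balls $B(\mathbf{a},q^{r})$ with $r<k$. Any two such balls of the same radius, in the same or in different shells, are mapped onto each other by some element of $\mathrm{G}$, using transitivity together with diagonal and unipotent elements that preserve the ball structure. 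Invariance then forces $\nu'$ of a ball to depend only on its radius. Counting $q^n$ subballs then pins down the dependence on the radius as $q^{nr}$ times a constant, so $\nu'$ agrees with $c_1\operatorname{Vol}$ on all balls, and hence everywhere.
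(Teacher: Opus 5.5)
Your main argument is correct and follows the same route as the paper. You split off the atom at the fixed point $0$, then use that $\mathcal{K}_{\infty}^n\setminus\{0\}$ is a single $\mathrm{G}$-orbit, which carries a unique invariant Radon measure up to scalars. You also supply details the paper leaves implicit: the Radon hypothesis, the need for $n\ge 2$, the stabilizer and its unimodularity, and Weil's uniqueness theorem.

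The optional ultrametric fallback, however, contains a false step: balls of the same radius in different shells are \emph{not} $\mathrm{G}$-translates of each other. For $g\in\mathrm{G}$ one has $g(\mathbf a+x^r\mathcal{O}_{\infty}^n)=g\mathbf a+x^r g\mathcal{O}_{\infty}^n$. This is a ball only if $g\mathcal{O}_{\infty}^n=x^s\mathcal{O}_{\infty}^n$ for some $s$, and $|\det g|=1$ forces $s=0$. So $g\in\operatorname{SL}_n(\mathcal{O}_{\infty})$, and such $g$ preserves $\Vert\cdot\Vert$.

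Your counting argument therefore only gives $\nu'|_{S_k}=c_k\operatorname{Vol}|_{S_k}$ on each shell $S_k=\{\Vert\mathbf v\Vert=q^k\}$, using transitivity of $\operatorname{SL}_n(\mathcal{O}_{\infty})$ on $S_k$. The constant $c_k$ might a priori depend on $k$. To close the argument you need an extra step linking shells. For example, $g=\operatorname{diag}(x,x^{-1},1,\dots,1)$ maps $U=\{\mathbf v:|v_1|=q^k,\ \Vert\mathbf v\Vert\le q^k\}\subseteq S_k$ into $S_{k+1}$ with $\operatorname{Vol}(gU)=\operatorname{Vol}(U)>0$. Invariance then gives $c_k=c_{k+1}$.
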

\begin{proof}
    Assume that $H$ acts on a locally compact homogeneous space $X$ with finitely many orbits, $X_1,\dots, X_k$, and $\nu$ is an $H$-invariant measure on $X$. Then, $\nu$ can be decomposed into a sum $\nu=\sum_{i=1}^m\nu_i$, where each $\nu_i$ is of the form $\nu_i(E):=\nu(E\cap X_i)$ and is an $H$-invariant measure supported on $X_i$. In this case, the $\operatorname{SL}_n(\mathcal{K}_{\infty})$-action on $\mathcal{K}_{\infty}^n$ has two orbits, which are $\mathcal{K}_{\infty}^n\setminus \{0\}$ and $\{0\}$ (see \cite{Weil} for more details). An invariant measure supported on $\mathcal{K}_{\infty}^n\setminus\{0\}$ is measure theoretically isomorphic to a scalar multiple of the Lebesgue measure, and an invariant measure supported on $\{0\}$ is a scalar multiple of the Dirac-delta mass at $0$. Hence, there exist $c_0,c_1\geq 0$, such that $\nu=c_0\delta_0+c_1\operatorname{Vol}$.
\end{proof}
\begin{proof}[Proof of Theorem \ref{thm:Siegel}]
    By Lemma \ref{cor:LattPointCnt}\eqref{lem:SchmidtLem} and Theorem \ref{lem:alphaMoment}, the function $\widetilde{f}:\mathcal{L}_n\rightarrow \mathbb{C}$ is integrable. It follows from the Riesz-Markov-Kakutani representation theorem that the linear functional $f\mapsto \int_{\mathcal{L}_n}\widetilde{f}dm_{\mathcal{L}_n}$ corresponds to some $\mathrm{G}$-invariant Radon measure $\nu$, i.e., $$\int_{\mathcal{L}_n}\widetilde{f}dm_{\mathcal{L}_n}=\int_{\mathcal{K}_{\infty}^n}fd\nu
    =c_0f(0)+c_1\int_{\mathcal K_\infty^n} f d\vol$$ 
    for some $c_0,\;c_1\in [0,\infty)$, where the last equality follows from Lemma \ref{lem:SL_n(K_infty)InvMeas}.
    
    To compute $c_1$, consider the sequence of test functions $f_m=\frac{1}{q^{mn}}\mathbf{1}_{x^m\mathcal{O}_{\infty}^n}$, where $m\in \mathbb N$. By Lemma \ref{lem:BK}, we have
      \begin{equation}
    \begin{split}
    \label{eqn:STLargeBalls}
        \int_{\mathcal{L}_n}\widetilde{f}_m dm_{\mathcal{L}_n}=\frac{1}{q^{mn}}\int_{\mathcal{L}_n}\left(\#\left(\Lambda\cap x^m\mathcal{O}_{\infty}^n\right)-1\right)\: dm_{\mathcal{L}_n}(\Lambda).
    \end{split}
    \end{equation}
    Note that if $\Lambda\in \mathcal L_n^{\le {q^{m+1}}}$, then, $\lambda_n(\Lambda)\leq q^{m+1}$, so that 
    \begin{equation}
        \widetilde{f_m}(\Lambda)=\frac{1}{q^{mn}}\prod_{i=1}^n\left\lceil\frac{q^{m+1}}{\lambda_i(\Lambda)}\right\rceil-\frac{1}{q^{mn}}=\frac{q^{(m+1)n}}{q^{mn}\det(\Lambda)}-\frac{1}{q^{mn}}=q^{n}-\frac{1}{q^{mn}}.
    \end{equation}
Observe that for any $\Lambda\in \mathcal L_n^{> q^{m+1}}$, where $m\in \mathbb N$,
\[
\lambda_1(\Lambda) < \frac 1 {q^{(m+1)/(n-1)}}
\quad \text{and hence} \quad
\alpha(\Lambda) > q^{(m+1)/(n-1)}.
\]

Moreover, in the spirit of the proof of Lemma~\ref{Function Field Schmidt Lemma}, one can take a uniform constant $C>0$ for which $\widetilde{f_m}\le C\alpha$ for all $m\in \mathbb N$. It follows that
\[\begin{split}
\int_{\mathcal L_n} \widetilde{f_m} dm_{\mathcal L_n}
&=
\int_{\mathcal L_n^{\le q^{m+1}}} \widetilde{f_m} dm_{\mathcal L_n}+\int_{\mathcal L_n^{> q^{m+1}}} \widetilde{f_m} dm_{\mathcal L_n}\\
&=
\left(q^n-\frac 1{q^{mn}}\right) m_{\mathcal L_n}\left(\mathcal L_n^{\le q^{m+1}}\right)+\int_{\mathcal L_n^{> q^{m+1}}} \widetilde{f_m} dm_{\mathcal L_n}.
\end{split}\]
Thus by Lemma~\ref{lem:alphaMoment},
\[\begin{split}
&\left|\int_{\mathcal L_n} \widetilde{f_m} dm_{\mathcal L_n} - \left(q^n-\frac 1{q^{mn}}\right) m_{\mathcal L_n}\left(\mathcal L_n^{\le q^{m+1}}\right)\right|
\le
C\int_{\mathcal L_n^{> q^{m+1}}} \alpha\: dm_{\mathcal L_n}\\
&\hspace{0.4in}=
C\int_{\mathcal L_n^{> q^{m+1}}} \alpha^{1.5}\cdot \alpha^{-0.5} dm_{\mathcal L_n}
\le \frac {C}{q^{(m+1)/2(n-1)}} \int_{\mathcal L_n^{> q^{m+1}}} \alpha^{1.5} dm_{\mathcal L_n}
\rightarrow 0
\end{split}\]
as $m\rightarrow \infty$. 
By Mahler's compactness criterion \cite[Theorem 1.1]{KST} (see also \cite{Mah}), $\lim_{m\rightarrow \infty}m_{\mathcal L_n}\left(\mathcal L_n^{\le q^{m+1}}\right)=m_{\mathcal L_n}(\mathcal L_n)=1$.
On the other hand,
$$\int_{\mathcal L_n} \widetilde{f_m} dm_{\mathcal L_n} =
c_1\int_{\mathcal{K}_{\infty}^n}f_md\operatorname{Vol}+c_0f_m(0)=\frac{c_1q^{mn}+c_0}{q^{mn}}\rightarrow c_1$$
as $m\rightarrow \infty$. Thus, $c_1=q^n$. 

Now, consider the sequence of test functions $h_m=\mathbf{1}_{x^{-m}\mathcal{O}_{\infty}^n}$, $m\in \mathbb N$, to compute $c_0$. By Lemma \ref{lem:BK}, 
\begin{equation}
\begin{split}
\label{eqn:STSmallBalls}
    \int_{\mathcal{L}_n}\widetilde{h}_mdm_{\mathcal{L}_n}=\int_{\mathcal{L}_n}\left(\#\left(\Lambda\cap x^{-m}\mathcal{O}_{\infty}^n\right)-1\right)dm_{\mathcal{L}_n}(\Lambda).
\end{split}
\end{equation}
Note that $\lambda_1(\Lambda)\geq q^{-(m-1)}$ if and only if
\[
\#\left(\Lambda\cap x^{-m}\mathcal{O}_{\infty}^n\right)=1.
\]
Hence,
\[\begin{split}
0\le
\int_{\mathcal L_n} \widetilde{h_m} dm_{\mathcal L_m}
&=\int_{\{\Lambda\in \mathcal L_n: \alpha_1(\Lambda)\ge q^{(m-1)}\}} \widetilde{h_m} dm_{\mathcal L_m}\\
&\le C \int_{\{\Lambda\in \mathcal L_n: \alpha_1(\Lambda)\ge q^{(m-1)}\}} \alpha\:dm_{\mathcal L_m}\\
&\le \frac {C} {q^{(m-1)/2}} \int_{\{\Lambda\in \mathcal L_n: \alpha_1(\Lambda)\ge q^{(m-1)}\}} \alpha^{1.5} dm_{\mathcal L_n},
\end{split}\]
where $C>0$ is the same constant for functions $f_m$ as before. By Lemma~\ref{lem:alphaMoment} again, the above quantity converges to zero as $m$  goes to $\infty$.

On the other hand,
\begin{equation}
\int_{\mathcal L_n} \widetilde{h_m} \:dm_{\mathcal L_m}=    q^n\int_{\mathcal{K}_{\infty}^n}h_md\operatorname{Vol}+c_0h_m(0)=q^{-(m-1)n}+c_0\rightarrow c_0
\end{equation}
as $m\rightarrow \infty$. Thus $c_0=0$ and one obtains the integral formula \eqref{eqn:SiegelFormula}.
\end{proof}
\subsection{Higher Moments}
We now compute the $k$-th moment of the Siegel transform
\begin{equation}
\begin{split}
\label{eqn:kMoment}
    \int_{\mathcal{L}_n}\widetilde{f}\:^kdm_{\mathcal{L}_n}
    &=\int_{\mathcal{L}_n}\left(\sum_{\mathbf{v}\in \mathcal{R}^n\setminus\{0\}}f(g\mathbf{v})\right)^kdm_{\mathcal{L}_n}(g\mathcal{R}^n)\\
    &=\int_{\mathcal{L}_n}\sum_{\mathbf{v}_1,\dots,\mathbf{v}_k\in \mathcal{R}^n\setminus\{0\}}f(g\mathbf{v}_1)\cdots f(g\mathbf{v}_k)dm_{\mathcal{L}_n}(g\mathcal{R}^n).
\end{split}
\end{equation}
If $F:(\mathcal K_\infty^n)^k\rightarrow \mathbb C$ is a smooth bounded function, and we define 
\[
\widetilde{F}(\Lambda)=\sum_{\mathbf{v}_1,\dots,\mathbf{v}_k\in \mathcal{R}^n\setminus\{0\}} F(g\mathbf{v}_1, \ldots, g\mathbf{v}_k),
\]
then the $k$-th moment formula of $f$ is the special case of the integral formula for $\widetilde{F}$, when $F=f\otimes \cdots \otimes f$.
We want to apply Riesz--Kakutani--Markov theorem again to deduce the higher moment formula and for this, we first divide $(\mathcal{R}^n)^k$ into the equivalence classes with respect to the relation
\[\begin{gathered}
(\mathbf{v}_1, \ldots, \mathbf{v}_k)\sim (\mathbf{w}_1, \ldots, \mathbf{w}_k)\in (\mathcal R^n)^k\quad\text{if}\\
(\mathbf{v}_1, \ldots, \mathbf{v}_k)=(g\mathbf{w}_1, \ldots, g\mathbf{w}_k)\quad\text{for some }g\in \SL_n(\mathcal K_\infty).
\end{gathered}\]
As in the real case \cite{Rogers1955} or $S$-arithmetic case \cite{Han}, 
such equivalence classes correspond to rational subspaces in $\mathcal K_\infty^k$, which can be represented by \emph{admissible matrices}, which are defined below.

Let $\Rmo$ be the set of all monic polynomials, that is, the set of $f\in \mathcal R$ whose leading coefficient is one. We define the \emph{great common divisor} $D:=\gcd(f_1,\dots,f_d)\in \Rmo$ for $f_1,\dots,f_d\in \mathcal{R}\setminus\{0\}$ as the unique monic polynomial satisfying the following properties:
\begin{itemize}
    \item $D\mid f_i$ for every $i=1,\dots,d$;
    \item if  $D'\in \mathcal R$ is such that $D'\mid f_i$ for all $i$, then $D\mid D'$.
\end{itemize}

\begin{definition}[Admissible Matrices]\label{def: admissible matrix}
Let $1\leq r\leq k$.
\begin{enumerate}
    \item We say that a matrix $D\in M_{r\times k}(\mathcal{R})$ is \emph{reduced} if $\gcd(\{D_{ij} : 1\leq i\leq r,1\leq j\leq k\})=1$. 
    \item For $Q\in \Rmo$, let $\mathfrak{D}^k_{r,Q}$ denote the set of reduced matrices $D\in M_{r\times k}(\mathcal R)$ such that there exist $1\leq i_1<\dots<i_r\leq k$ such that
    \begin{enumerate}
        \item no column of $D$ is the zero vector,
        \item $([D]^{i_1},\dots,[D]^{i_r})=Q\mathrm{Id}_r$, and
        \item $D_{\ell j}=0$ for every $1\leq \ell\leq r, 1\leq j<i_\ell$,
    \end{enumerate}
    where $[D]^j$ is the $j$-th column of $D$. 
    \item Let $N(D,Q)$ denote the number of row vectors $\mathbf{v}\in \mathcal{R}^{r}_{\le \deg(Q)}$, such that $\frac{1}{Q}\mathbf{v}D\in \mathcal{R}^k$.
\end{enumerate} 
\end{definition}

In the real case, $N(D,q)$ is defined as the number of row vectors in $\{0,1,\ldots, q-1\}^r$, which is a fundamental domain for $\mathbb Z^r/q\mathbb Z^r$. Thus, $\frac 1 q \mathbf{v} D\in \mathbb Z_S^k$, and $N(D,q)$ is related to the covolume of the lattices in $(\mathbb R^n)^k$ which are associated to the matrix $\frac D q$.
In the function field case, since $\operatorname{covol}(\mathcal R^k)=q^{k}$, to avoid a redundant $q^r$-term in the moment formula, we restrict the definition of $N(D,Q)$ to vectors in $\mathcal R^r_{\le \deg(Q)}$, which is a fundamental domain for $(x\mathcal R/Q\mathcal R)^r$. In particular, $N(\mathrm{Id}_k, 1)=q^{k}$ and moreover, $\left| \mathcal{R}^r_{\leq \deg(Q)}\right|=q^r\left| \mathcal{R}_{<\deg(Q)}^r\right|$.
\begin{theorem}\label{thm:kMomentsForm} 
Let $F:(\mathcal K_\infty^n)^k\rightarrow \mathbb C$ be a bounded and compactly supported function. It holds that
        \begin{equation}\begin{split}
    \label{eqn:Rogersk}
&\int_{\mathcal{L}_n} \widetilde{F} (g\mathcal R^n) dm_{\mathcal{L}_n}(g\mathcal R^n) 
=
q^{nk}\int_{(\mathcal{K}_{\infty}^n)^k} F(\mathbf v_1, \ldots, \mathbf v_n) d\mathbf v_1 \cdots d\mathbf v_n\\
&\hspace{1in}
+\sum_{r=1}^{k-1}\sum_{Q\in \Rmo}\sum_{D\in \mathfrak{D}^k_{r,Q}}
\frac{N(D,Q)^n}{\vert Q\vert^{nr}}\int_{\left(\mathcal{K}_{\infty}^n\right)^r}F\left(\frac{1}{Q}(\mathbf{v}_1,\dots,\mathbf{v}_r)D\right)d\mathbf{v}_1\dots d\mathbf{v}_r.\end{split}\end{equation}
\end{theorem}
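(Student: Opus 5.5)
The plan is to follow Rogers' original strategy, as adapted to the $S$-arithmetic setting in \cite{Han}. We decompose the sum defining $\widetilde{F}$ according to the $\mathcal K$-linear span of the tuple $(\mathbf v_1,\dots,\mathbf v_k)\in(\mathcal R^n\setminus\{0\})^k$, and then apply Theorem~\ref{thm:Siegel}, in a generalized form for $r$-tuples, to each piece.

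\textbf{Step 1: parametrize tuples by admissible matrices.} Let $V=(\mathbf v_1,\dots,\mathbf v_k)\in M_{n\times k}(\mathcal R)$ have rank $r$, with $1\le r\le k$ and $r\le n$ (for the terms with $r\le k-1$ we will need $r<n$, which holds since $k$ is small relative to $n$ in our applications). Let $i_1<\dots<i_r$ be the first columns at which the rank jumps. Write $\mathbf w_\ell=\mathbf v_{i_\ell}$, so that $V=\frac1Q(\mathbf w_1,\dots,\mathbf w_r)D$, where $D$ is in row-echelon form with $Q\,\mathrm{Id}_r$ in the pivot columns. Here $Q\in\Rmo$ is the minimal monic common denominator, and $D$ is reduced. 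Conversely, every such $(Q,D)$ with $D\in\mathfrak D^k_{r,Q}$ arises. Condition (a), that no column of $D$ is zero, encodes $\mathbf v_j\ne0$. The rank-$k$ case corresponds to $D=\mathrm{Id}_k$, $Q=1$.

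The main subtlety is uniqueness. The $\mathbf w_\ell$ are forced to be the pivot columns, but a given pair $(Q,D)$ together with an $r$-tuple $\mathbf y=(\mathbf y_1,\dots,\mathbf y_r)$ of independent vectors in $\mathcal K_\infty^n$ yields an integral tuple exactly when $\mathbf y\in\Lambda_D^n$. Here $\Lambda_D=\{\mathbf w\in\mathcal R^r : \frac1Q\mathbf w D\in\mathcal R^k\}\supseteq Q\mathcal R^r$ is a sublattice of $\mathcal R^r$, applied row-wise. Thus
\[
\widetilde F(g\mathcal R^n)=\sum_{r}\sum_{Q}\sum_{D\in\mathfrak D^k_{r,Q}}\ \sum_{\substack{Y\in (g\mathcal R^n)\text{-valued},\ \operatorname{rk}Y=r,\\ \text{rows of }g^{-1}Y\in\Lambda_D}} F\!\left(\tfrac1Q YD\right),
\]
and the $r=k$ term is the usual sum over independent $k$-tuples.

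\textbf{Step 2: integrate each term.} For fixed $(Q,D)$ the inner sum is a Siegel-type transform over independent $r$-tuples in $g\mathcal R^n$, restricted by congruence conditions modulo $Q$. Extending Theorem~\ref{thm:Siegel} to independent $r$-tuples with $r<n$, by the same Riesz--Markov argument plus invariance of the measure under $\SL_n(\mathcal K_\infty)$ acting on independent $r$-tuples (a single orbit), gives $q^{nr}\int F(\cdot)\,d\mathbf y$ for the full $r$-tuple sum. Restricting to tuples in the sublattice $\Lambda_D^n$, and rescaling by $\frac1Q$, introduces the index factor. This is where the constant is decided. Rescaling $\mathbf y\mapsto \mathbf y/Q$ contributes $|Q|^{-nr}$. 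The fraction of residue classes of $(\mathcal R^r/Q\mathcal R^r)^n$ lying in $\Lambda_D^n$ is $(N(D,Q)q^{-r}/|Q|^r)^n$. Together with the factor $q^{nr}$, this produces exactly $N(D,Q)^n/|Q|^{nr}$, which is why $N(D,Q)$ counts vectors in $\mathcal R^r_{\le\deg Q}$, a set of size $q^r|Q|^r$ per coordinate. I will verify the residue-class averaging by splitting $\Lambda_D^n$ into cosets of $Q\mathcal R^{r\times n}$ and applying the congruence-subgroup version of Siegel's formula, whose normalization is fixed by the index of $\Gamma(Q)$.

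\textbf{Step 3: justify the interchange.} Absolute convergence is needed to swap the sum over $(Q,D)$ with the integral. Since $F$ is bounded and compactly supported, $|\widetilde F|\le C\,\widetilde{\mathbf 1_B}^{\,k}$ for a ball $B$, so it suffices to know that $\int\widetilde{\mathbf 1_B}^k<\infty$. By Lemma~\ref{cor:LattPointCnt}, $\widetilde{\mathbf 1_B}^k\ll\alpha^k$, and Theorem~\ref{lem:alphaMoment} bounds this for $k<n$. Applying the same bound term by term, via monotone convergence for nonnegative $F$, shows the right-hand side converges. I expect Step 2, namely pinning down the exact normalization of the restricted $r$-tuple Siegel formula together with the $q^r$ bookkeeping, to be the main obstacle. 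The bijection in Step 1 needs care but is standard.
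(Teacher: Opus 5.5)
Your route is essentially the paper's. The paper also partitions by rank, pivot indices and the pair $(Q,D)$ with $D\in\mathfrak D^k_{r,Q}$, and sums over pivot tuples in $\Phi(D,Q)=\Psi(D,Q)\setminus E$ (your $\Lambda_D^n$). It then applies Riesz--Markov on the single $\mathrm G$-orbit $(\mathcal K_\infty^n)^r\setminus E$, and controls convergence via $\widetilde F\ll\alpha^k$ and Theorem~\ref{lem:alphaMoment}. That last step is where the implicit hypothesis $k\le n-1$, stated in Theorem~\ref{thm:rPart}, enters.

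\textbf{The normalization in Step 2 double-counts and must be fixed.} You sum over the integral pivot matrices $Y$ with integrand $F(\frac1Q YD)$, which is already the integrand in \eqref{eqn:Rogersk}, so no rescaling occurs. The constant is simply the density of $\Lambda_D^n$ in $(\mathcal K_\infty^n)^r$:
\[
q^{nr}\bigl(N(D,Q)q^{-r}|Q|^{-r}\bigr)^n=\frac{N(D,Q)^n}{|Q|^{nr}}.
\]
Multiplying in the extra factor $|Q|^{-nr}$ that you attribute to ``rescaling $\mathbf y\mapsto\mathbf y/Q$'' would give $N(D,Q)^n/|Q|^{2nr}$, not the claimed constant.

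\textbf{The coset and congruence-subgroup detour is unnecessary.} $\Psi(D,Q)$ is stable under left multiplication by $\SL_n(\mathcal R)$, so the sum already defines a function on $\mathcal L_n$ itself. The paper identifies the constant directly by testing against $q^{-nrt}\mathbf 1_{B(0,q^t)\setminus N_1(E)}$. It counts points of $g\Psi(D,Q)$ in large balls via Lemma~\ref{lem:BK}, uniformly on compacta, with the cusp controlled by $\alpha$-moments.
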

Let $(\mathbf v_1, \ldots, \mathbf v_k)\in ( \mathcal{R}\setminus\{0\})^k$ be such that $\dim \operatorname{span}\{\mathbf v_1, \ldots, \mathbf v_k\}=r$, where $r=1, \ldots, k$. One can find a unique set $\{1\le i_1 < \cdots < i_r\le k\}$ of indices such that 
\begin{center}
$\mathbf{v}_j\in \operatorname{span}\{\mathbf{v}_{i_1},\dots,\mathbf{v}_{i_\ell}\}$, whenever $i_{\ell}\leq j<i_{\ell+1}$. 
\end{center}
In particular, the vectors $\mathbf v_{i_1}, \ldots, \mathbf v_{i_r}$ are linearly independent.

By setting $\mathbf{w}_1:=\mathbf{v}_{i_1},\dots,\mathbf{w}_r:=\mathbf{v}_{i_r}$, we obtain a matrix $C\in M_{r\times k}(\mathcal{K})$ such that 
\begin{equation}
    (\mathbf{w}_1,\dots,\mathbf{w}_r)C=(\mathbf{v}_1,\dots,\mathbf{v}_k).
\end{equation}
Since $\mathbf{v}_j$ is spanned by $\{\mathbf{v}_{i_1},\dots,\mathbf{v}_{i_{\ell}}\}$ whenever $i_{\ell}\leq j<i_{\ell+1}$, for every $1\leq \ell\leq r$ and every $1\leq j<i_{\ell}$, we have $C_{\ell,j}=0$. Let $Q$ be the unique polynomial in $\Rmo$ such that
$$D:=QC\in M_{r\times k}(\mathcal{R})$$ is reduced.

Clearly, $([D]^{i_1},\dots,[D]^{i_r})=Q\mathrm{Id}_r$ and it follows that $D\in\mathfrak{D}^k_{r,Q}$. Denote
\begin{equation}\label{Def: Phi}
\Phi(D,Q)=\left\{(\mathbf{w}_1,\dots,\mathbf{w}_r)\in (\mathcal R^n)^r:\begin{matrix}\frac{1}{Q}(\mathbf{w}_1,\dots,\mathbf{w}_r)D\in(\mathcal{R}^n)^k\;\text{and}\\ 
\operatorname{rk}\{\mathbf{w}_1,\dots,\mathbf{w}_r\}=r\end{matrix}\right\}.
\end{equation}

Note that if $r=k$, only possible $Q$ is $1$ and $\mathfrak{D}^k_{k,1}=\{\mathrm{Id}_k\}$. 

In general, there is a partition of $(\mathcal{R}\setminus\{0\})^k$ with respect to $\mathfrak{D}^k_{r,Q}$ and using $\Phi(D,Q)$ as follows.
\begin{equation}
\begin{split}
\label{eqn:(R^n)^kSplit}
    \{(\mathbf{v}_1,\dots,\mathbf{v}_k):\mathbf{v}_i\in \mathcal{R}^n\setminus \{0\}\}= \left\{(\mathbf{v}_1,\dots,\mathbf{v}_k):\operatorname{rk}\{\mathbf{v}_1,\dots,\mathbf{v}_k\}=k\right\}\\
    \sqcup\bigsqcup_{r=1}^{k-1}\bigsqcup_{Q\in \Rmo}\bigsqcup_{D\in \mathfrak{D}^k_{r,Q}}\left\{\frac{1}{Q}(\mathbf{w}_1,\dots,\mathbf{w}_r)D:(\mathbf{w}_1,\dots,\mathbf{w}_d)\in \Phi(D,Q)\right\}.
\end{split}
\end{equation}
By \eqref{eqn:(R^n)^kSplit} and Tonelli's integration theorem, 
\begin{equation}
    \begin{split}
    &\int_{\mathcal{L}_n}\widetilde{F}(g\mathcal R^n) dm_{\mathcal{L}_n}
    \int_{\mathcal{L}_n} \hspace{-0.05in}\sum_{\scriptsize \begin{matrix}
        \mathbf{v}_1,\dots,\mathbf{v}_k\in \mathcal{R}^n\\
        \operatorname{rk}\{\mathbf{v}_1,\dots,\mathbf{v}_k\}=k
    \end{matrix}}F(g\mathbf{v}_1,\dots,g\mathbf{v}_k)dm_{\mathcal{L}_n}\\
    &\hspace{0.8in}+\sum_{r=1}^{k-1}\sum_{Q\in\Rmo}\sum_{D\in \mathfrak{D}^k_{r,Q}}
    \int_{\mathcal{L}_n}\sum_{\scriptsize \begin{array}{c}
    (\mathbf w_1, \ldots, \mathbf w_r)\\
    \text{in }\Phi(D,Q)\end{array}}
    F\left(\frac{1}{Q}g\mathbf{w}_1D,\dots,\frac{1}{Q}g\mathbf{w}_rD\right)dm_{\mathcal{L}_n}.
    \end{split}
\end{equation}

Thus, Theorem \ref{thm:kMomentsForm} is deduced from the following theorem on the summands with $r=1,\dots,k-1$ from \eqref{eqn:Rogersk}. 
\begin{theorem}
\label{thm:rPart}
    For $1\leq k\leq n-1$ and $1\leq r\leq k$, let $Q\in \Rmo$ and let $D\in \mathfrak{D}^k_{r,Q}$, where $\mathfrak{D}^k_{r,Q}$ is defined as in Definition~\ref{def: admissible matrix}. Then we have
    \begin{equation}\label{eq: rPart}
    \begin{split}
        &\int_{\mathcal{L}_n}\sum_{\scriptsize \begin{array}{c} (\mathbf{w}_1,\dots,\mathbf{w}_r)\\
        \text{in }\Phi(D,Q)\end{array}}F\left(\frac{1}{Q}(g\mathbf{w}_1,\dots,g\mathbf{w}_r)D\right)dm_{\mathcal{L}_n}\\
        &\hspace{1in}=\frac{N(D,Q)^n}{\vert Q\vert^{nr}}\int_{\left(\mathcal{K}_{\infty}^n\right)^r}F\left(\frac{1}{Q}(\mathbf{v}_1,\dots,\mathbf{v}_r)D\right)d\mathbf{v}_1\dots d\mathbf{v}_r.
    \end{split}
    \end{equation}
\end{theorem}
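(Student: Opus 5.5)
Our plan follows Rogers' argument in the real case: first reduce to the full-rank moment formula in dimension $r$, then unfold over $\SL_n(\mathcal R)$-orbits.

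\textbf{Step 1: the sum as a lattice sum.} Set $\Lambda_{D,Q}=\{(\mathbf{w}_1,\dots,\mathbf{w}_r)\in(\mathcal K^n)^r : \frac1Q(\mathbf w_1,\dots,\mathbf w_r)D\in(\mathcal R^n)^k\}$. Since $([D]^{i_1},\dots,[D]^{i_r})=Q\mathrm{Id}_r$, this set is contained in $(\mathcal R^n)^r$. Rows of $(\mathbf{w}_1,\dots,\mathbf{w}_r)$ act independently in each of the $n$ coordinates, so $\Lambda_{D,Q}=\Lambda_1^n$, where $\Lambda_1=\{\mathbf u\in\mathcal R^r: \frac1Q\mathbf uD\in\mathcal R^k\}$ is a sublattice of $\mathcal R^r$ containing $Q\mathcal R^r$.

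Write $\mathbf u=Q\mathbf a+\mathbf b$ with $\deg b_j<\deg Q$. Then the index is
\[
[\Lambda_1:Q\mathcal R^r]=\#\{\mathbf b\in\mathcal R^r_{<\deg Q}:\tfrac1Q\mathbf bD\in\mathcal R^k\}=q^{-r}N(D,Q),
\]
using $|\mathcal R^r_{\le\deg Q}|=q^r|\mathcal R^r_{<\deg Q}|$. Checking that this normalization of $N(D,Q)$ matches exactly is a bookkeeping point to verify.

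\textbf{Step 2: reduction to a full-rank sum.} The left side of \eqref{eq: rPart} becomes $\int_{\mathcal L_n}\sum G(g\mathbf w_1,\dots,g\mathbf w_r)$, where $G(\mathbf v_1,\dots,\mathbf v_r)=F(\frac1Q(\mathbf v_1,\dots,\mathbf v_r)D)$ and the sum runs over rank-$r$ tuples in $\Lambda_{D,Q}$. Choose a basis matrix $h\in\GL_r(\mathcal R)$-type change with $\Lambda_1=\mathcal R^r h$, $|\det h|=|Q|^r q^{-r}N(D,Q)^{-1}\cdot |Q|^{?}$; more precisely, $\operatorname{covol}(\Lambda_1)=q^r\,|Q|^r/[\Lambda_1:Q\mathcal R^r]$, up to the $q^r$ normalization. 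Then tuples in $\Lambda_{D,Q}$ are $(\mathbf y_1,\dots,\mathbf y_r)h$ with $\mathbf y_i\in\mathcal R^n$, and rank $r$ is preserved. So the integral equals $\int_{\mathcal L_n}\widetilde{G_h}$, where $G_h(\mathbf y)=G(\mathbf y h)$ and the sum is over linearly independent $r$-tuples of $g\mathcal R^n$.

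\textbf{Step 3: full-rank formula, $r\le k\le n-1$.} We prove $\int_{\mathcal L_n}\sum_{\operatorname{rk}=r}H(g\mathbf y_1,\dots,g\mathbf y_r)\,dm=q^{nr}\int H$ for $r<n$. Linearly independent $r$-tuples in $\mathcal R^n$ split into $\SL_n(\mathcal R)$-orbits indexed by the gcd structure, i.e.\ by the primitive sublattice spanned together with an $r\times r$ integral matrix in Hermite normal form. On primitive tuples the action is transitive, since $r<n$ and $\mathcal R$ is a PID. Unfolding, the primitive part equals $c\int H$ for some constant $c$, by uniqueness of $\SL_n(\mathcal K_\infty)$-invariant measures on the open orbit of rank-$r$ tuples (the analogue of Lemma~\ref{lem:SL_n(K_infty)InvMeas}).

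Alternatively, and more cheaply, we argue as in the proof of Theorem~\ref{thm:Siegel}. The functional $H\mapsto\int\sum_{\operatorname{rk}=r}H$ is finite by Lemma~\ref{cor:LattPointCnt}, which gives $\sum\le C\alpha^r$, together with Theorem~\ref{lem:alphaMoment} since $r\le n-1$. This functional defines an $\SL_n(\mathcal K_\infty)$-invariant Radon measure on $(\mathcal K^n_\infty)^r$. Because $r<n$, the rank-$r$ tuples form a single open orbit, so the measure equals $c\,\mathrm{Vol}$ plus pieces on lower-rank orbits. The lower-rank pieces vanish, by testing on small neighborhoods as in the $h_m$ argument. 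The constant $c=q^{nr}$ is found with test functions $q^{-mnr}\mathbf 1_{(x^m\mathcal O^n_\infty)^r}$: for $\lambda_n\le q^{m+1}$, the count of rank-$r$ tuples in the ball is $(q^{n(m+1)})^r(1-o(1))$, and the cusp is controlled by the $\alpha^{1.5}$ trick.

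\textbf{Step 4: change of variables.} Applying Step 3 to $H=G_h$ gives $q^{nr}\int G(\mathbf yh)\,d\mathbf y=q^{nr}|\det h|^{-n}\int G$. We then need $q^{nr}|\det h|^{-n}=N(D,Q)^n/|Q|^{nr}$, i.e.\ $|\det h|=q^r|Q|^r/N(D,Q)$, and this is exactly the index computation of Step 1.

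\textbf{Main obstacle.} The main obstacle is Step 3, specifically showing that the measure carries no mass on lower-rank orbits and computing $c$ uniformly in the cusp. Integrability requires $\alpha^r\in L^1$, which holds since $r\le k\le n-1$; the vanishing of the extra pieces needs a little more, which we would handle with $\alpha^{r+\epsilon}$ when $r<n-1$ and a more careful rank-by-rank count otherwise.
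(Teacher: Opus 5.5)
Your argument is correct. The analytic core is the same as the paper's: Riesz--Markov, plus uniqueness of the invariant measure on the single open orbit of rank-$r$ tuples when $r<n$. What differs is how the arithmetic of $(D,Q)$ enters.

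The paper never chooses a basis of $\Psi(D,Q)$. It applies the invariant-measure argument directly to $H\mapsto\int_{\mathcal L_n}\sum_{\Phi(D,Q)}H(g\mathbf w_1,\dots,g\mathbf w_r)$ on $C_c\bigl((\mathcal K_\infty^n)^r\setminus E\bigr)$. It fixes the constant by counting points of the non-unimodular lattice $g\Psi(D,Q)\subseteq\mathcal K_\infty^{nr}$ in large balls, using its covolume $|Q|^{nr}/N(D,Q)^n$. It then extends to general $F$ by cutting out shrinking neighbourhoods of $E$.

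You instead observe that $\Psi(D,Q)=\Lambda_1^n$ row by row, and write $\Lambda_1=\mathcal R^r h$. Two small corrections to your Step~2: $h\in M_r(\mathcal R)$ with $\det h\neq 0$, not $h\in\GL_r(\mathcal R)$; and the garbled formula there should simply read $|\det h|=[\mathcal R^r:\Lambda_1]$. This reduces everything to the single case $D=\mathrm{Id}_r$, $Q=1$. The dependence on $(D,Q)$ then enters only through two things:
\begin{itemize}
\item the Jacobian $|\det h|^{-n}$ of $\mathbf y\mapsto\mathbf y h$;
\item the index $[\mathcal R^r:\Lambda_1]=|Q|^r/\bigl(q^{-r}N(D,Q)\bigr)$, which you computed correctly.
\end{itemize}
The $q^r$ normalisation built into $N(D,Q)$ is exactly what makes $q^{nr}|\det h|^{-n}=N(D,Q)^n/|Q|^{nr}$. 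Your route makes the exponents transparent and keeps all lattice-point counting on $g\mathcal R^n$, where Lemma~\ref{cor:LattPointCnt} applies verbatim. The paper's route avoids choosing $h$, but needs counting in $g\Psi(D,Q)$ and the extra approximation step near $E$.

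The ``main obstacle'' you flag is not actually one.
\begin{itemize}
\item \emph{No mass on $E$.} The representing measure is $B\mapsto\int_{\mathcal L_n}\#\{\mathbf y\in(\mathcal R^n)^r:\operatorname{rk}\mathbf y=r,\ g\mathbf y\in B\}\,dm_{\mathcal L_n}$. Since $g$ preserves rank, this gives $E$ mass zero outright. No $h_m$-type test is needed. The lower-rank locus is not a finite union of orbits anyway, so that argument's structure would not transfer. This observation also lets you work on all of $(\mathcal K_\infty^n)^r$ from the start, bypassing the paper's $F_t\nearrow F$ step.
\item \emph{Computing the constant.} Lemma~\ref{cor:LattPointCnt} gives
$q^{-mnr}\#\{\ldots\}\le q^{-mnr}\bigl(\#\Lambda\cap B(0,q^m)\bigr)^r\le q^{nr}\alpha(\Lambda)^r$, uniformly in $m$. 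By Theorem~\ref{lem:alphaMoment} this dominant is integrable for every $r\le n-1$.
\item \emph{Pointwise convergence.} For fixed $\Lambda$ the count of all tuples is eventually exactly $q^{n(m+1)r}$. The rank-deficient tuples number $O_\Lambda\bigl(q^{m(n+1)(r-1)}\bigr)=o(q^{mnr})$.
\end{itemize}
Dominated convergence then yields $c=q^{nr}$ for every $r\le n-1$, including $r=n-1$. You need neither $\alpha^{r+\epsilon}$ nor a rank-by-rank count.
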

To prove Theorem \ref{thm:rPart}, define
    \begin{equation}\label{Def: Psi}\Psi(D,Q)=\left\{(\mathbf{w}_1,\dots,\mathbf{w}_r)\in \left(\mathcal{R}^n\right)^r:\frac{1}{Q}(\mathbf{w}_1,\dots,\mathbf{w}_r)D\in \left(\mathcal{R}^n\right)^k\right\},\end{equation}
    so that $\Phi(D,Q)=\Psi(D,Q)\setminus E$, where
        $$E=\{(\mathbf{w}_1,\dots,\mathbf{w}_r)\in(\mathcal{R}^n)^r:\operatorname{rk}_{\mathcal{K}}\{\mathbf{w}_1,\dots,\mathbf{w}_r\}<r\}.$$
        
\begin{lemma}
\label{lem:Psi(D,Q)Covol}
    For every $Q\in \Rmo$ and for every $D\in \mathfrak{D}^k_{r,Q}$, $\Psi(D,Q)$ is a lattice in $(\mathcal{R}^n)^r$ with covolume $|Q|^{nr}/N(D,Q)^n$. 
\end{lemma}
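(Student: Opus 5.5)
The plan is to reduce to a one-dimensional count. Since $D\in M_{r\times k}(\mathcal R)$ acts on the right on the row $(\mathbf{w}_1,\dots,\mathbf{w}_r)$, the condition $\frac{1}{Q}(\mathbf{w}_1,\dots,\mathbf{w}_r)D\in(\mathcal R^n)^k$ holds if and only if it holds for each of the $n$ coordinate rows separately. Write $w^{(s)}=(w_{1,s},\dots,w_{r,s})\in\mathcal R^r$ for the $s$-th coordinates of $\mathbf w_1,\dots,\mathbf w_r$. The condition says exactly that $\frac1Q w^{(s)}D\in\mathcal R^k$ for every $s=1,\dots,n$. Hence, after reordering coordinates, we have
\[
\Psi(D,Q)\cong \Psi_1^{\,n},\qquad \Psi_1:=\{\mathbf{u}\in\mathcal R^r:\tfrac1Q\mathbf{u}D\in\mathcal R^k\}.
\]
Covolumes are multiplicative under this product decomposition, so it suffices to show that $\Psi_1$ is a lattice in $\mathcal K_\infty^r$ of covolume $|Q|^r/N(D,Q)$.

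$\Psi_1$ is clearly an $\mathcal R$-submodule of $\mathcal R^r$, hence discrete. Since $D$ has entries in $\mathcal R$, it contains $Q\mathcal R^r$, so it has full rank $r$ and is a lattice. Its covolume is then $[\mathcal R^r:\Psi_1]\cdot\operatorname{covol}(\mathcal R^r)$. Here I use the normalization of \cref{sec:GON}, in which $\mathfrak m^r$ is a fundamental domain and $\operatorname{covol}(\mathcal R^r)=q^{-r}$; this is the normalization under which the claimed formula is consistent. For the index, the inclusions $Q\mathcal R^r\subseteq\Psi_1\subseteq\mathcal R^r$ give
\[
[\mathcal R^r:\Psi_1]=\frac{[\mathcal R^r:Q\mathcal R^r]}{[\Psi_1:Q\mathcal R^r]}=\frac{|Q|^r}{\#\left(\Psi_1\cap\mathcal R^r_{<\deg Q}\right)},
\]
using that $\mathcal R^r_{<\deg Q}$ is a complete set of representatives of $\mathcal R^r/Q\mathcal R^r$ of cardinality $q^{r\deg Q}=|Q|^r$.

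The remaining step, and the only one needing care, is to relate this count to $N(D,Q)$, which by definition counts $\Psi_1\cap\mathcal R^r_{\le\deg Q}$. Since $Q$ is monic, every polynomial of degree $\le\deg Q$ is uniquely $cQ+w$ with $c\in\mathbb F_q$ and $\deg w<\deg Q$. Moreover, $cQ\mathbf e_i\in\Psi_1$ for each standard basis vector $\mathbf e_i$, so membership in $\Psi_1$ is unaffected by adding such multiples. Therefore $N(D,Q)=q^r\,\#(\Psi_1\cap\mathcal R^r_{<\deg Q})$, in line with the paper's remark that $|\mathcal R^r_{\le\deg Q}|=q^r|\mathcal R^r_{<\deg Q}|$. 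Combining the pieces gives
\[
\operatorname{covol}(\Psi_1)=q^{-r}\cdot\frac{|Q|^r q^r}{N(D,Q)}=\frac{|Q|^r}{N(D,Q)}.
\]
Raising to the $n$-th power yields $|Q|^{nr}/N(D,Q)^n$. The main point I expect to need checking is that this covolume normalization matches the one used later in the proof of Theorem~\ref{thm:rPart}.
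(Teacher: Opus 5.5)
Your proof is correct and is essentially the paper's argument: both use $Q(\mathcal R^n)^r\subseteq\Psi(D,Q)\subseteq(\mathcal R^n)^r$ and count residues modulo $Q$. Your splitting $\Psi(D,Q)\cong\Psi_1^{\,n}$ and your comparison of $\mathcal R^r_{\le\deg Q}$ with $\mathcal R^r_{<\deg Q}$ make explicit two factors of $q^{nr}$ that cancel without comment in the paper's one-line count. Your normalization $\operatorname{covol}(\mathcal R^r)=q^{-r}$, the Haar measure of $\mathfrak m^r$, is also the one needed in Theorem~\ref{thm:rPart}, where the constant is $\lim_{t}q^{-nrt}\#\bigl(g\Psi(D,Q)\cap B(0,q^t)\bigr)$, i.e.\ the reciprocal of the Haar covolume.
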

\begin{proof}
    Firstly, it is easy to verify that $\Psi(D,Q)$ is an additive group, satisfying $$Q(\mathcal{R}^n)^r\subseteq \Psi(D,Q)\subseteq (\mathcal{R}^n)^r.$$ 
    Hence, $\Psi(D,Q)$ is a lattice in $(\mathcal{R}^n)^r$. By taking the projection $\pi_Q: (\mathcal{R}^n)^r\rightarrow(\mathcal{R}^n/Q\mathcal{R}^n)^r$, one obtains that 
    \begin{equation*}
        \operatorname{covol}(\Psi(D,Q))=\frac{\#(\mathcal{R}^n/Q\mathcal{R}^n)^r}{\#\left\{(\mathbf{v}_1,\dots,\mathbf{v}_r)\in (\mathcal{R}^n/Q\mathcal{R}^n)^r:\frac{1}{Q}(\mathbf{v}_1,\dots,\mathbf{v}_r)D\in (\mathcal{R}^n)^k\right\}}=\frac{\vert Q\vert^{nr}}{N(D,Q)^n}.
    \end{equation*}
\end{proof}

\begin{proof}[Proof of Theorem \ref{thm:rPart}]
 For $1\le r \le k$ and $Q\in \Rmo$, fix $D\in \mathfrak D^k_{r,Q}$, and let $\Phi(D,Q)$ and $\Psi(D,Q)$ be as in \eqref{Def: Phi} and \eqref{Def: Psi} respectively.

For a compactly supported, continuous, and bounded function $H:\left(\mathcal{K}_{\infty}^n\right)^r\rightarrow \mathbb{R}_{\geq 0}$, define
\[\begin{split}
\phi_H(g\Gamma)&=\sum_{(\mathbf{w_1},\dots,\mathbf{w}_r)\in \Phi(D,Q)}H\left(g\mathbf{w}_1,\dots,g\mathbf{w}_r\right),\quad \forall g\Gamma\in \mathcal L_n.\\
\end{split}\]
Then equation \eqref{eq: rPart} follows from the following formula
\begin{equation}\label{eq: H-integral}
\int_{\mathcal L_n} \phi_H(g\Gamma) dm_{\mathcal L_n}(g\Gamma)
=\frac {N(D,Q)^r} {|Q|^{nr}} \int_{(\mathcal K^n)^r} H(\mathbf w_1, \ldots, \mathbf w_r) d\mathbf w_1 \cdots d\mathbf w_r,
\end{equation}
when taking $H=H_F$ defined by
    \[
    H_F(\mathbf w_1, \ldots, \mathbf w_r)
    :=F\left(\frac 1 Q (\mathbf w_1, \ldots, \mathbf w_r)D\right).
    \]
    
Let $E=\{(\mathbf w_1, \ldots, \mathbf w_r)\in \mathcal K_\infty^r: \rk(\mathbf w_1, \ldots, \mathbf w_r)\le r-1\}$. Note that $(\mathcal K^n_\infty)^r\setminus E$ is locally compact Hausdorff space and consists of a single orbit of the $\mathrm{G}$-action defined by $g.(\mathbf w_1, \ldots, \mathbf w_r)=(g\mathbf w_1, \ldots, g\mathbf w_r)$ for all $g\in \mathrm{G}$ and $(\mathbf w_1, \ldots, \mathbf w_r)\in (\mathcal K^n_\infty)^r$.
Moreover, $E$ is locally diffeomorphic to $K^{(r-1)(n+1)}_\infty$. Since $r\le n-1$, we have $(r-1)(n+1)<nr$, and the unique $\mathrm{G}$-invariant measure $m_{(\mathcal K_\infty^n)^r\setminus E}:=\left.m_{(\mathcal K_\infty^n)^r}\right\vert_{(\mathcal K_\infty^n)^r\setminus E}$ on $(\mathcal K_\infty^n)^r\setminus E$ is measure-theoretically equal to the Haar measure $m_{(\mathcal K_\infty^n)^r}$.

First, we claim that for $H\in C_c\big((\mathcal K_\infty^n)^r\setminus E\big)$, the integral $\int_{\mathcal L_n} \phi_H(g\Gamma) dm_{\mathcal L_n}$ is finite: one can take a compactly supported, bounded, continuous function $h:\mathcal{K}_{\infty}^n\rightarrow \mathbb{R}_{\geq 0}$, satisfying $H(\mathbf{v}_1,\dots,\mathbf{v}_r)\leq h(\mathbf{v}_1)\cdots h(\mathbf{v}_r)=:h^{\otimes r}(\mathbf{v}_1,\dots,\mathbf{v}_r)$. As a consequence, $\phi_H(\Lambda)\leq \big(\widetilde{h}(\Lambda)\big)^r$ for every $\Lambda\in \mathcal{L}_n$. Hence, by Theorem \ref{lem:alphaMoment} and Lemma \ref{cor:LattPointCnt} \eqref{lem:SchmidtLem}, there exists $C_h\in \mathbb R_{>0}$, such that
    \begin{equation*}
        \int_{\mathcal{L}_n} \phi_H \: dm_{\mathcal{L}_n}\leq \int_{\mathcal{L}_n}\widetilde{h}^r\:dm_{\mathcal{L}_n}\leq C_h\int_{\mathcal{L}_n}\alpha^rdm_{\mathcal{L}_n}<\infty.
    \end{equation*}
Thus by Riesz--Kakutani--Markov theorem, together with the fact that the positive linear functional $$H\in C_c\big((\mathcal K_\infty^n)^r\setminus E\big)\mapsto \int_{\mathcal L_n} \phi_H \:dm_{\mathcal L_n}$$ is $\mathrm{G}$-invariant, there exists $c>0$ such that for any $H\in C_c\big((\mathcal K_\infty^n)^r\setminus E\big)$,
    \begin{equation}
    \label{eqn:inthat(F)=cint(F)}    \int_{\mathcal{L}_n}\phi_H\: dm_{\mathcal{L}_n}=c\int_{\left(\mathcal{K}_{\infty}^n\right)^r} H\: dm_{\left(\mathcal{K}_{\infty}^n\right)^r}.\end{equation}

    \vspace{0.1in}
    Second, let us compute the constant $c>0$ for the functions in $C_c\left((\mathcal K^n_\infty)^r\setminus E\right)$. For each $t\in \mathbb N$, consider $B_t=B_{(\mathcal{K}_{\infty}^n)^r}(0,q^t)\setminus N_{1}(E)\subseteq (\mathcal{K}_{\infty}^n)^r$, where $N_{1}(E)$ is the $1$-neighborhood of $E$. 
    Define a function $H_t$ on $(\mathcal K^n_\infty)^r\setminus E$ as 
    $H_t=\frac 1 {q^{trn}} \mathbf 1_{B_t}.$
    Observe that
    \[\begin{gathered}
    m_{(\mathcal K_\infty^n)^r} \left(B_{(\mathcal{K}_{\infty}^n)^r}(0,q^t)\cap  N_{1}(E)\right)=O\left(q^{t(r-1)(n+1)}\right),\\
    \#\left(\Lambda\cap B_{(\mathcal{K}_{\infty}^n)^r}(0,q^t)\cap  N_{1}(E)\right)=O_\Lambda\left(q^{t(r-1)(n+1)}\right)
    \end{gathered}\] 
    for any lattice $\Lambda\in \mathcal{L}_n$. More precisely, the implied constant in the second Big-O depends on the successive minima $\lambda_1(\Lambda),\cdots ,\lambda_n(\Lambda)$ of the lattice $\Lambda$.
    It follows that
    \begin{equation}\label{eqn:H_tInt}
    \lim_{t\rightarrow \infty}\int_{(\mathcal{K}_{\infty}^n)^r} H_t\:dm_{(\mathcal{K}_{\infty}^n)^r} =\lim_{t\rightarrow \infty}\int_{(\mathcal{K}_{\infty}^n)^r}\frac 1 {q^{trn}}\mathbf 1_{B_{(\mathcal{K}_{\infty}^n)^r}(0,q^t)}\:dm_{(\mathcal{K}_{\infty}^n)^r}=1.\end{equation}
    Moreover, for every $\Lambda \in \mathcal{L}_n$,
    \begin{equation}\label{eqn:phi_H_t}\begin{split}
    \phi_{H_t}(\Lambda)
    &=\frac 1 {q^{nrt}} \#\left(g\Phi(D,Q)\cap B_t\right)
    =\frac 1 {q^{nrt}} \#\left(g\Psi(D,Q)\cap B_t\right)\\
    &=\frac 1 {q^{nrt}} \#\left(g\Psi(D,Q) \cap (B_{(\mathcal K_\infty^n)^r}(0, q^t)\setminus N_{1}(E))\right)\\
    &\underset{t\rightarrow \infty}{\longrightarrow} \frac 1 {q^{nrt}} \#(g\Psi(D,Q)\cap B_{(\mathcal K_\infty^n)^r}(0, q^t)),
    \end{split}\end{equation}
    where the limit is uniform on any compact subset of $\mathcal L_n$. By Lemma \ref{lem:BK} and Lemma \ref{lem:Psi(D,Q)Covol}, the right hand side of \eqref{eqn:phi_H_t} converges to $N(D,Q)^n/|Q|^{nr}$ as $t\rightarrow \infty$, uniformly on any compact subset of $\mathcal L_n$.
    Thus, by \eqref{eqn:inthat(F)=cint(F)} and \eqref{eqn:H_tInt}, we have
    \[
    c=\lim_{t\rightarrow \infty} c\int_{\left(\mathcal{K}_{\infty}^n\right)^r} H_t\: dm_{\left(\mathcal{K}_{\infty}^n\right)^r}=\lim_{t\rightarrow \infty} \int_{\mathcal L_n} \phi_{H_t}\: dm_{\mathcal L_n}=\frac {N(D,Q)^n} {|Q|^{nr}}.
    \]

\vspace{0.1in}
    Finally, we show that the identity \eqref{eqn:inthat(F)=cint(F)} extends to functions in $C_c\left((\mathcal K^n_\infty)^r\right)$. 
For any given $F\in C_c\left((\mathcal K^n_\infty)^{r}\right)$, consider the sequence of functions $F_t \in C_c\left((\mathcal K^n_\infty)^{r}\setminus E\right)$ defined by
\[
F_t=F\mid^{}_{(\mathcal K_\infty^n)^r\setminus N_{\frac 1 {q^{-t}}}(E)},
\]
where $N_{\frac 1 {q^{-t}}}(E)$ is the $\frac 1 {q^{-t}}$-neighborhood of $E$.
Note that $(F_t)\nearrow F$, and thus there exists a constant $C=C_F>0$ such that
\begin{equation}\label{eqn:phi<alpha^r}
\phi_{F_t}\le \phi_F\le C\alpha^r, \quad\forall t\in \mathbb N.\end{equation}
Since $m_{(\mathcal K_\infty^n)^r} (\supp F \cap N_{\frac 1 {q^{-t}}}(E))=O_F(q^{-t(r-1)(n+1)})$, it follows that
\[
\lim_{t\rightarrow \infty} \int_{(\mathcal K_\infty^n)^r} F_t \: dm_{(\mathcal K_\infty^n)^r}=\int_{(\mathcal K_\infty^n)^r} F\: dm_{(\mathcal K_\infty^n)^r}.
\]
Hence, it suffices to show that \begin{equation}\label{eqn:IntConverge}\lim_{t\rightarrow \infty}\int_{\mathcal L_n} \phi_{F_t} dm_{\mathcal L_n}=\int_{\mathcal L_n} \phi_{F} dm_{\mathcal L_n}.\end{equation}
Since $F$ is compactly supported, for any $\Lambda=g\mathcal R^n\in \mathcal L_n$, there is $t_0>1$ depending on $\|g\|,\;\|g^{-1}\|$ (thus uniform on any compact set on $\mathcal L_n$) such that if $t\ge t_0$, we have $\phi_{F_t}=\phi_F$, that is, $\supp F \cap N_{\frac 1 {q^{-t}}}(E) \cap g\Phi(D,Q)=\emptyset$. 

Therefore, for any $\varepsilon>0$, there exists $\ell=\ell_{\varepsilon}\in \mathbb N$ for which $\int_{\mathcal L_n^{> q^{\ell}}} \alpha^{r+\frac 1 2} dm_{\mathcal L_n}<\frac \varepsilon {C}$. Hence, by \eqref{eqn:phi<alpha^r} and \eqref{lem:UM}, for all sufficiently large $t$, we have
\[
\left|\int_{\mathcal L_n} \phi_F \: dm_{\mathcal L_n} -\int_{\mathcal L_n} \phi_{F_t} \: dm_{\mathcal L_n}\right|\leq \lim_{t\rightarrow \infty} \int_{\mathcal L_n^{> q^{\ell}}} \left|\phi_F- \phi_{F_t}\right| dm_{\mathcal L_n} <\frac{\varepsilon}{C}\cdot C=\varepsilon.
\]
Thus, \eqref{eqn:IntConverge} holds, so that all functions $F\in C_c((\mathcal{K}_{\infty}^n)^r)$ satisfy \eqref{eqn:inthat(F)=cint(F)}. 
\end{proof}

\subsection{Upper Bound of the Variance}
Let us first restate the special case of Theorem~\ref{thm:kMomentsForm} when $k=2$.
\begin{lemma}\label{lem:Rogers2nd} Let $n\ge 3$ and $F:(\mathcal K_\infty^n)^2\rightarrow \mathbb C$ be a smooth function. Then
\begin{equation}\label{Eqn: second moment}\begin{split}
    \int_{\mathcal L_n} \widetilde{F} dm_{\mathcal L_n}
    =&q^{2n} \int_{(\mathcal K_\infty^n)^2} F \:d\vol_{(\mathcal K_\infty^n)^2}\\
    &+\sum_{Q\in \Rmo}
    \sum_{\scriptsize \begin{array}{c}
    a\in \mathcal R\setminus \{0\}\\
    \gcd(a,Q)=1
    \end{array}} \frac {q^n}{|Q|^n} \int_{\mathcal K_\infty^n} F\left(\mathbf v, \frac {a} {Q}\mathbf v\right) d\vol(\mathbf v).
\end{split}\end{equation}
\end{lemma}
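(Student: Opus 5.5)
The plan is to specialize Theorem~\ref{thm:kMomentsForm} to $k=2$. The hypothesis $n\ge 3$ gives $k=2\le n-1$, so Theorem~\ref{thm:rPart}, and hence Theorem~\ref{thm:kMomentsForm}, applies. I will work with bounded compactly supported $F$, which is the setting of Theorem~\ref{thm:kMomentsForm}; for a general smooth $F$ I expect to pass to the limit by monotone convergence, using the domination $\widetilde F\ll\alpha^2$ from Lemma~\ref{cor:LattPointCnt} together with Theorem~\ref{lem:alphaMoment} for $m=2<n$.

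For $k=2$, the $r=k$ term of \eqref{eqn:Rogersk} is $q^{2n}\int_{(\mathcal K_\infty^n)^2}F\,d\vol$. The only remaining value is $r=1$, so the main step is to identify $\mathfrak D^2_{1,Q}$ explicitly.

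\emph{Step 1: the admissible matrices.} A matrix $D\in\mathfrak D^2_{1,Q}$ is a $1\times 2$ row with a pivot index $i_1\in\{1,2\}$, where $[D]^{i_1}=Q$ and $D_{1j}=0$ for $j<i_1$. If $i_1=2$, the first column would be zero, which violates condition (a). Hence $i_1=1$ and $D=(Q,a)$ with $a\in\mathcal R$. Condition (a) forces $a\neq 0$, and reducedness is exactly $\gcd(Q,a)=1$. Conversely, every such pair $(Q,a)$ gives an element of $\mathfrak D^2_{1,Q}$, so the sum over $D$ is precisely the sum over $a\in\mathcal R\setminus\{0\}$ with $\gcd(a,Q)=1$.

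\emph{Step 2: computing $N(D,Q)$.} We count $v\in\mathcal R$ with $\deg v\le \deg Q$ such that $\frac1Q v(Q,a)=(v,\,av/Q)\in\mathcal R^2$. Since $\gcd(a,Q)=1$, this holds if and only if $Q\mid v$. The multiples of the monic $Q$ of degree at most $\deg Q$ are exactly $cQ$ with $c\in\mathbb F_q$, so $N(D,Q)=q$. (The case $Q=1$ is consistent with this: then $v$ ranges over constants.)

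\emph{Step 3: assembling the formula.} The weight becomes $N(D,Q)^n/|Q|^{n\cdot 1}=q^n/|Q|^n$. The integrand $F\!\left(\frac1Q\mathbf v(Q,a)\right)$ equals $F\!\left(\mathbf v,\frac aQ\mathbf v\right)$. Substituting these into \eqref{eqn:Rogersk} yields \eqref{Eqn: second moment}.

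I expect the only delicate point to be Step~2. There one must use the convention that $v$ ranges over polynomials of degree $\le\deg Q$, rather than over residues mod $Q$, to obtain the factor $q$, and hence $q^n$ in the weight.
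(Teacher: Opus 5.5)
Your proof is correct and is essentially the paper's argument. You identify $\mathfrak D^2_{1,Q}$ as the rows $(Q,a)$ with $a\neq 0$ and $\gcd(a,Q)=1$, compute $N(D,Q)=\#\left(Q\mathcal R\cap\mathcal R_{\le \deg Q}\right)=q$, and substitute into Theorem~\ref{thm:kMomentsForm}. Your check that the pivot must be $i_1=1$ is a detail the paper leaves implicit.

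You should drop the proposed extension to arbitrary smooth $F$. The bound $\widetilde F\ll\alpha^2$ requires compact support, and for $F\equiv 1$ both sides are infinite. So the lemma should be read, as the paper implicitly does, for bounded compactly supported $F$.
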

\begin{proof}
For each $Q\in \Rmo$, the matrix $D\in \mathfrak D^2_{1,Q}$ is of the form $\begin{pmatrix} Q & a\end{pmatrix}$, where $a\in \mathcal R\setminus \{0\}$ is coprime with $Q$.
In view of Theorem \ref{thm:kMomentsForm}, it suffices to compute $N(D,Q)$. Note that a vector $\mathbf{v}\in \mathcal{K}_{\infty}$ satisfies $\left(\mathbf{v},\frac{a}{Q}\mathbf{v}\right)=\frac{1}{Q}\mathbf{v}\begin{pmatrix}
    Q&a
\end{pmatrix}\in \mathcal{R}^2$ if $\mathbf{v}\in \mathcal{R}$ and $a\mathbf{v}\in Q\mathcal{R}$. Since $\gcd(a,Q)=1$, then, $\mathbf{v}\in Q\mathcal{R}$. Hence, 
\begin{equation}
\begin{split}
    N(D,Q)=\#Q\mathcal{R} \cap \mathcal{R}_{\leq \deg(Q)}=\#(Q\mathbb{F}_q)=q.
\end{split}
\end{equation}
Thus, Equation \eqref{Eqn: second moment} follows directly from Theorem~\ref{thm:kMomentsForm}.
\end{proof}

From the Lemma \ref{lem:Rogers2nd}, together with some basic number theoretic properties on function fields, one can deduce Theorem \ref{thm:Rogers}. 

\begin{proof}[Proof of Theorem \ref{thm:Rogers}]
By Lemma \ref{lem:Rogers2nd}, it suffices to prove that for a measurable set $B\subseteq \mathcal K_\infty^n$,
\begin{equation}\label{eqn:var}\begin{split}
\operatorname{Var}(X_{B})
=\sum_{Q\in \Rmo}
    \sum_{\scriptsize \begin{array}{c}
    a\in \mathcal R\setminus \{0\}\\
    \gcd(a,Q)=1
    \end{array}} \frac {q^n}{|Q|^n} \int_{\mathcal K_\infty^n} X_B\left(\mathbf v\right)X_B\left(\frac {a} {Q}\mathbf v\right) d\vol(\mathbf v)
\end{split}\end{equation}
is bounded by $C_n\vol(B)$, where $C_n$ is the constant given as in \eqref{eqn:C_n}.
Note that 
\begin{equation}\label{eqn:VolBND}\begin{split}
    \int_{\mathcal{K}_{\infty}}X_B(\mathbf{v})X_B\left(\frac{a}{Q}\mathbf{v}\right)d\operatorname{Vol}(\mathbf{v})
    &=\operatorname{Vol}\left(B\cap \frac{Q}{a}B\right)\\
    &\hspace{-0.7in}\leq \begin{cases}
    \operatorname{Vol}(B)& \text{if }\deg(Q)\ge \deg(a);\\[0.05in]
    \left|\frac{Q}{a}\right|^n\operatorname{Vol}(B)& \text{if }\deg(Q)<\deg(a).
\end{cases}\end{split}\end{equation}
Thus, one can consider three cases which are $\deg(Q)\ge \deg(a)$, $\deg(Q)=\deg(a)$, and $\deg(Q)<\deg(a)$. For the last case, one can switch the roles of $Q$ and $a$ by the following equality that holds for any integrable function $f:\mathcal K_\infty^n \rightarrow \mathbb C$:
\begin{equation}\label{eqn:symmetry}\begin{split}
\frac 1 {|Q|^n} \int_{\mathcal K^n_\infty} f\left(\mathbf v\right)f\left( \frac a Q \mathbf v\right) d\vol(\mathbf v)
&=\int_{\mathcal K^n_\infty} f\left(Q\mathbf v\right)f\left( a\mathbf v \right) d\vol(\mathbf v)\\
&=\frac 1 {|a|^n} \int_{\mathcal K^n_\infty} f\left(\frac Q a \mathbf v\right)f\left(\mathbf v\right)d\operatorname{Vol}(\mathbf{v}).
\end{split}\end{equation}
Moreover, since $\{a\in \mathcal R: \deg(a)=d\}=\mathbb F_q^{\times} \times \{a\in \Rmo: \deg(a)=d\}$ ($d\neq 0$), by changing the order of summation of $Q$ and $a$ in \eqref{eqn:var}, it follows that \eqref{eqn:var} is bounded above by
\begin{equation}
\label{eqn:sumVOlBND}
    q^n\sum_{d=0}^{\infty}\frac{1}{q^{nd}}
    \sum_{\scriptsize \begin{array}{c}
    Q\in \Rmo\\
    \deg(Q)=d\end{array}}\left(q\times \hspace{-0.2in}\sum_{\scriptsize \begin{array}{c}
    a\in \mathcal{R}_{< d}\\
    \gcd(a,Q)=1\end{array}}\hspace{-0.15in}\operatorname{Vol}(B)+\sum_{\scriptsize\begin{matrix}
        a\in \mathcal R_{=d}\\
        \gcd(a,Q)=1
    \end{matrix}}
    \operatorname{Vol}(B)\right).
\end{equation}
We now bound the first internal summand in \eqref{eqn:sumVOlBND}. Recall the Euler $\varphi$-function for a function field defined as
$$\varphi(Q)=\#\{a\in \mathcal{R}_{<\deg(Q)}: \gcd(a,Q)=1\},$$ 
and that by \cite[Proposition 2.7]{Ros}, we have 
$$\sum_{Q\in \mathcal{R}_{\text{monic}}\cap \mathcal{R}_{=d}}\varphi(Q)=q^{2d}(1-q^{-1}).$$ 
To count the number of $a\in \mathcal R$ with $\vert a\vert=q^m$, where $m\ge d$ (we only need the case when $m=d$) and $a$ is coprime with $Q$, we use the inclusion-exclusion principle. Let $P_1, \cdots, P_{\ell}$ be irreducible factors of $Q$. It holds that
\begin{equation}
\begin{split}
\label{eqn:CoprimeGenDeg}
   &\#\{\vert a\vert=q^m:\gcd(a,Q)=1\}
   =\#\mathcal{R}_{=m}\setminus \bigcup_{i=1}^{\ell}P_i\mathcal{R}\\
   &\hspace{0.2in}=q^m(q-1)+\sum_{k=1}^{\ell}(-1)^k\sum_{1\leq i_1<\dots<i_k\leq \ell}\#\mathcal{R}_{=m}\cap P_{i_1}\cdots P_{i_k}\mathcal{R}\\
   &\hspace{0.2in}=q^m(q-1)\left[1+\sum_{k=1}^{\ell}(-1)^k\sum_{1\leq i_1<\cdots <i_k\leq \ell}\frac{1}{\vert P_{i_1}\cdots P_{i_k}\vert}\right]\\
   &\hspace{0.2in}=q^m(q-1)\prod_{ \scriptsize \begin{array}{c} P|Q\\ P\text{ irred.}\end{array}}\left(1-\frac{1}{\vert P\vert}\right)
   =q^m(q-1)\frac{\varphi(Q)}{\vert Q\vert},
\end{split}
\end{equation}
where the last line is obtained through \cite[Proposition 1.7]{Ros}. 
Thus, the fact that $n\geq 3$ implies that
\begin{equation}
\begin{split}
\label{eqn:smallA}
\eqref{eqn:sumVOlBND} =q^n(2q-1)(1-q^{-1})\sum_{d=0}^{\infty}\frac{\operatorname{Vol}(B)}{q^{nd}}q^{2d}=(2q-1)(1-q^{-1})\frac{q^{2n-2}}{q^{n-2}-1}\operatorname{Vol}(B).
\end{split}
\end{equation}
\end{proof}
When the set $B\subseteq \mathcal K_\infty^n$ is a metric ball $B(\alpha, r)$, it is possible to optimize the inequality \eqref{eqn:VolBND}. Together with the lemma below, which is well-known for ultrametic spaces, one can obtain the exact value of the variance $\operatorname{Var}(X_{B(\alpha, r)})$. 
\begin{lemma}
\label{lem:UMBalls}
    Let $\alpha_1,\alpha_2\in \mathcal{K}_{\infty}^n$ and let $r_1,r_2\geq 0$. Then 
    $$B(\alpha_1,r_1)\cap B(\alpha_2,r_2)=\begin{cases}
        \emptyset, &\text{ if }\Vert \alpha_1-\alpha_2\Vert>\max\{r_1,r_2\},\\
        B(\alpha_i,\min\{r_1,r_2\})&\text{ if }\Vert \alpha_1-\alpha_2\Vert\leq \max\{r_1,r_2\}.
    \end{cases}$$
\end{lemma}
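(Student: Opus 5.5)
The plan is to prove Lemma~\ref{lem:UMBalls} directly from the two ultrametric properties \eqref{lem:UM} and \eqref{lem:UM=}, with no earlier lattice results needed. Throughout I take $B(\alpha,r)=\{\mathbf{u}:\Vert \mathbf{u}-\alpha\Vert\le r\}$, matching the way the statement uses real radii $r_1,r_2$. Without loss of generality assume $r_1\le r_2$, so that $\max\{r_1,r_2\}=r_2$ and $\min\{r_1,r_2\}=r_1$. The argument then splits into the two cases of the statement.

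\emph{Disjoint case.} Suppose $\Vert\alpha_1-\alpha_2\Vert>r_2$ and, for contradiction, that some $\mathbf{u}$ lies in both balls. Then \eqref{lem:UM} gives
\[
\Vert\alpha_1-\alpha_2\Vert\le\max\{\Vert\mathbf{u}-\alpha_1\Vert,\Vert\mathbf{u}-\alpha_2\Vert\}\le\max\{r_1,r_2\}=r_2,
\]
which is a contradiction. Hence the intersection is empty.

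\emph{Nested case.} Suppose $\Vert\alpha_1-\alpha_2\Vert\le r_2$. I will show $B(\alpha_1,r_1)\subseteq B(\alpha_2,r_2)$. For $\mathbf{u}\in B(\alpha_1,r_1)$, \eqref{lem:UM} gives
\[
\Vert\mathbf{u}-\alpha_2\Vert\le\max\{\Vert\mathbf{u}-\alpha_1\Vert,\Vert\alpha_1-\alpha_2\Vert\}\le\max\{r_1,r_2\}=r_2.
\]
Therefore the intersection equals $B(\alpha_1,r_1)=B(\alpha_1,\min\{r_1,r_2\})$.

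The only subtle point is the phrase ``$B(\alpha_i,\min\{r_1,r_2\})$'' when $r_1=r_2$, since then either index could be meant. In that situation, $\Vert\alpha_1-\alpha_2\Vert\le r_1$, and the same inequality run symmetrically shows that every point of a ball is a center. So $B(\alpha_1,r_1)=B(\alpha_2,r_1)$, and the choice of $i$ is immaterial.

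I do not expect a genuine obstacle. The only care needed is to keep the radius convention consistent with closed balls, and to note that the index $i$ in the statement is the one attaining $\min\{r_1,r_2\}$.
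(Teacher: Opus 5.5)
Your proof is correct: the paper does not prove Lemma~\ref{lem:UMBalls} at all, only calling it well known for ultrametric spaces, and your two-case use of the strong triangle inequality, together with the remark that either centre works when $r_1=r_2$, is exactly the standard justification. You were also right to read $r_1,r_2$ as genuine radii, $B(\alpha,r)=\{\mathbf u:\Vert\mathbf u-\alpha\Vert\le r\}$, rather than as the integer exponent in the paper's earlier definition $B(\alpha,r)=\alpha+x^r\mathcal O_\infty^n$, since that is how the lemma is used in the proof of Proposition~\ref{lem:BallErr}; you were likewise right to take $i$ to be the index attaining $\min\{r_1,r_2\}$.
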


\begin{proof}[Proof of Proposition~\ref{lem:BallErr}]
When $\|\alpha\|\le r$, by Lemma \ref{lem:UMBalls}, we have $B(\alpha, r)=B(0, r)$. Thus we may assume that $\alpha$ is the origin. In this case, every intersection $B(0,r)\cap B\left(0, \left|\frac Q a\right| r\right)$ is non-empty. Moreover, the inequality in \eqref{eqn:VolBND} is an equality for every pair of coprime polynomials $(a,Q)$. Thus, by following the proof of Theorem~\ref{thm:Rogers}, we have
$$\operatorname{Var}(X_{B(\alpha, r)})
=\operatorname{Var}(X_{B(0, r)})
=C_n.$$
\vspace{0.1in}
Now, assume that $\|\alpha\|>r$. By Lemma \ref{lem:UMBalls},  $B(\alpha,r)\cap \frac{Q}{a}B(\alpha,r)=B(\alpha,r)\cap B\left(\frac{Q}{a}\alpha,\left|\frac{Q}{a}\right|r\right)\neq \emptyset$ if and only if $\left|1-\frac{Q}{\alpha}\right|\cdot \Vert \alpha\Vert\leq \max\left\{1,\left|\frac{Q}{a}\right|\right\}r$, i.e.,
\begin{equation}\label{eqn:admissRange}
B(\alpha,r)\cap B\left(\frac{Q}{a}\alpha,\left|\frac{Q}{a}\right|r\right)\neq \emptyset
\quad\Leftrightarrow\quad
r<\Vert \alpha\Vert\leq \frac{\max\left\{|a|,\left|{Q}\right|\right\}}{\left|a-{Q}\right|}r.
\end{equation}
Observe that if $|a|\neq \left|{Q}\right|$, then \eqref{lem:UM=} implies that the right hand side of \eqref{eqn:admissRange} is equal to $r$, so that $B(\alpha,r)\cap B\left(\frac{Q}{a}\alpha,\left|\frac{Q}{a}\right|r\right)=\emptyset$. Therefore, the only case in which $B\left(\alpha,r\right)\cap B\left(\frac{Q}{a}\alpha,\left|\frac{Q}{a}\right|r\right)\neq \emptyset$ is when $\left|a-Q\right|\le\frac {|Q|}{\|\alpha\|/r}<\vert Q\vert$.

Let $t\ge 1$ be a positive integer such that $\frac {\|\alpha\|}{r} = q^t>1$. By \eqref{eqn:sumVOlBND}, for $Q\in \mathcal{R}_{\text{monic}}$ with $\deg(Q)=d$, we would like to count
\begin{equation}
\label{eqn:|a-Q|<q^d-tCoPrime}
\#\left\{a\in \mathcal R_{=d}: |a-Q|\le q^{d-t},\;\gcd(a,Q)=1 \right\}.
\end{equation}
First, note that \eqref{eqn:|a-Q|<q^d-tCoPrime} is zero whenever $\deg(Q)<t$, since $|a-Q|\ge 1$ provided that $a\neq Q$.
For $Q\in \Rmo$ with $d=\deg(Q)\ge t$, denote the distinct irreducible factors of $Q$ by $P_1,\ldots, P_{\ell}$, where $\ell\ge 1$. It follows that for $1\le i_1 < \ldots < i_k\le \ell$, 
\[\begin{split}
\#\left\{a\in \mathcal R: |a-Q|\le q^{d-t} \right\}\cap P_{i_1}\cdots P_{i_k}\mathcal R
&=\#\left\{a'\in \mathcal R: \left|a'-\frac{Q}{P_{i_1}\cdots P_{i_k}}\right|\le \frac{q^{d-t}}{P_{i_1}\cdots P_{i_k}}\right\}\\
&=\frac{q^{d-t+1}}{|P_{i_1}\cdots P_{i_k}|},
\end{split}\]
provided that we put $a=P_{i_1}\cdots P_{i_k}a'\in P_{i_1}\cdots P_{i_k}\mathcal R$ and noting that $Q/(P_{i_1}\cdots P_{i_k})$ is again a monic polynomial. Using the inclusion-exclusion principle in a similar manner as in \eqref{eqn:CoprimeGenDeg}, we obtain that
\[\begin{split}
    &\#\left\{a\in \mathcal R: |a-Q|\le q^{d-t}\;\text{and}\; \gcd(a,Q)=1 \right\}\\
    &=\#\left\{a\in \mathcal R: |a-Q|\le q^{d-t}\right\}
   +\sum_{k=1}^\ell (-1)^k \sum_{1\le i_1 < \cdots < i_k \le \ell}
    \#\left\{a\in P_{i_1}\cdots P_{i_k} \mathcal R: |a-Q|\le q^{d-t}\right\}\\
    &=q^{d-t+1}\left(1+\sum_{k=1}^\ell (-1)^k \sum_{1\le i_1< \cdots < i_k\le \ell} \frac 1 {|P_{i_1} \cdots P_{i_k}|}\right)
    =q^{d-t+1} \frac {\varphi(Q)}{|Q|}\\
    &=\frac{\varphi(Q)}{q^{t-1}}.
\end{split}\]

Consequently,
\[\begin{split}
\operatorname{Var}(X_{B})
&=\sum_{d=t}^\infty \frac {q^n}{q^{dn}} r^n
\sum_{\scriptsize \begin{array}{c}
Q\in \Rmo\\
\deg(Q)=d\end{array}} q^{-t+1}\varphi(Q)\\
&=q^{n-t+1}r^n(1-q^{-1})\sum_{d=t}^\infty \frac 1 {q^{(n-2)d}} 
=\frac {q^{2n-1}}{q^{n-2}-1}(1-q^{-1})\left(\frac{r}{\Vert \alpha\Vert}\right)^{n-1}\vol(B).
\end{split}\]
\end{proof}
\section{Effective Oppenheim Conjecture}
\label{sec:Oppenheim}
To prove Theorem \ref{thm:N_QCount}, we use Theorem \ref{thm:Rogers} to obtain bounds on measure of the set of lattices with large discrepancy. 
\subsection{Discrepancy Bounds}
For a lattice $\Lambda$ and a set $B\subseteq \mathcal{K}_{\infty}^n$, define the \emph{discrepancy of $\Lambda$ with respect to $B$} as 
$$D(\Lambda,B)=\left|\#(\Lambda\setminus\{0\})\cap B-q^n\operatorname{Vol}(B)\right|.$$
Note that the second moment of the discrepancy $D(\Lambda, B)$ is equal to the variance of the Siegel transform of the indicator function of $B\subseteq \mathcal{K}_{\infty}^n$, thus Theorem~\ref{thm:Rogers} is reformulated as follows.
\begin{lemma}
\label{lem:DiscInt}
For every measurable set $B$, we have 
     $$\int_{\mathcal{L}_n}D(\Lambda,B)^2dm_{\mathcal{L}_n}(\Lambda)\le C_{n}\operatorname{Vol}(B),$$
where $C_n$ is defined in \eqref{eqn:C_n}.
\end{lemma}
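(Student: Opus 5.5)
The statement of Lemma~\ref{lem:DiscInt} is Theorem~\ref{thm:Rogers} rewritten in terms of the discrepancy, so the plan is to identify $\int_{\mathcal{L}_n}D(\Lambda,B)^2\,dm_{\mathcal{L}_n}$ with $\operatorname{Var}(X_B)$ and then quote that theorem.

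Write $X_B(\Lambda)=\widetilde{\mathbf 1}_B(\Lambda)=\#(\Lambda\setminus\{0\})\cap B$. By definition, $D(\Lambda,B)^2=\bigl(X_B(\Lambda)-q^n\operatorname{Vol}(B)\bigr)^2$. Expanding the square and integrating over $\mathcal{L}_n$ against the probability measure $m_{\mathcal{L}_n}$ gives
\[
\int_{\mathcal{L}_n}D(\Lambda,B)^2\,dm_{\mathcal{L}_n}
=\int_{\mathcal{L}_n}\widetilde{\mathbf 1}_B^{\,2}\,dm_{\mathcal{L}_n}
-2q^n\operatorname{Vol}(B)\int_{\mathcal{L}_n}\widetilde{\mathbf 1}_B\,dm_{\mathcal{L}_n}
+q^{2n}\operatorname{Vol}(B)^2 .
\]
By Theorem~\ref{thm:Siegel}, $\int_{\mathcal{L}_n}\widetilde{\mathbf 1}_B\,dm_{\mathcal{L}_n}=q^n\operatorname{Vol}(B)$. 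The right-hand side therefore collapses to $\int_{\mathcal{L}_n}\widetilde{\mathbf 1}_B^{\,2}\,dm_{\mathcal{L}_n}-q^{2n}\operatorname{Vol}(B)^2$, which is exactly the expression for $\operatorname{Var}(X_B)$ recorded after Theorem~\ref{thm:Siegel}. Theorem~\ref{thm:Rogers} then gives the bound $C_n\operatorname{Vol}(B)$.

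The only point needing care is that Theorem~\ref{thm:Siegel} is stated for smooth $f$, while $\mathbf 1_B$ is merely measurable.

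\begin{itemize}
\item If $\operatorname{Vol}(B)=\infty$, the inequality is trivial, so assume $\operatorname{Vol}(B)<\infty$.
\item For $\mathbf 1_B\ge 0$, extend Theorem~\ref{thm:Siegel} to indicator functions by monotone convergence: first to balls (whose indicators are smooth and compactly supported), then to finite disjoint unions of balls, and then to open sets.
\item Pass to arbitrary measurable sets of finite volume by outer regularity of Haar measure, approximating $B$ from outside by open sets $U_j\supseteq B$ and using monotonicity of $\widetilde{\mathbf 1}_{U_j}$.
\end{itemize}

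This is the same extension already implicit in the proof of Theorem~\ref{thm:Rogers}, which uses Lemma~\ref{lem:Rogers2nd} with $F=\mathbf 1_B\otimes\mathbf 1_B$. I therefore expect this measure-theoretic step to be the only, and mild, obstacle. In particular, $\widetilde{\mathbf 1}_B\in L^2(m_{\mathcal{L}_n})$ whenever the right-hand side is finite, so the expansion of the square above is legitimate.
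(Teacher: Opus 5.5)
Your proposal is correct and follows the paper's route: the paper simply observes that the second moment of $D(\Lambda,B)$ is the variance of $X_B$ (your expansion of the square combined with Theorem~\ref{thm:Siegel}) and then invokes Theorem~\ref{thm:Rogers}. One point to tighten in your measure-theoretic extension: the expansion needs the \emph{lower} bound $\int_{\mathcal{L}_n}\widetilde{\mathbf 1}_B\,dm_{\mathcal{L}_n}\ge q^n\operatorname{Vol}(B)$, whereas monotonicity under outer approximation by open sets only gives $\le$, so you should pass to the decreasing limit $\bigcap_j U_j$ by dominated convergence (or approximate $B$ from inside by compact sets) and discard the null set $\bigcap_j U_j\setminus B$.
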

We now use Lemma \ref{lem:DiscInt} to bound the measure of the set of matrices whose image in $\mathcal{L}_n$ has high discrepancy with respect to $B$. Towards this end, for $T>0$, a measurable set $B\subseteq \mathcal{K}_{\infty}^n$, and a compact set $\mathcal{C}\subseteq \mathrm{G}$, define
$$\mathcal{M}_{B,T}^{\mathcal{C}}=\{g\in \mathcal{C}:D(g\mathcal{R}^{n},B)\geq T\}.$$
\begin{lemma}
\label{lem:measLargDiscrep}
Let $\mathcal C$ be a compact set in $\mathrm{G}$.
    For every $T\in \mathbb{R}_{>0}$ and measurable $B\subseteq \mathcal{K}_{\infty}^n$, we have
    \begin{equation}
    \label{eqn:M_B,TBND}
        m_{\mathrm{G}}(\mathcal{M}_{B,T}^{\mathcal{C}})\ll_{\mathcal{C}}\frac{\operatorname{Vol}(B)}{T^2}.
    \end{equation}
\end{lemma}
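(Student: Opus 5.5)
The plan is Chebyshev's inequality applied to the second moment bound of Lemma \ref{lem:DiscInt}, after transferring the Haar measure on the compact set $\mathcal{C}\subseteq\mathrm{G}$ to the probability measure $m_{\mathcal{L}_n}$ on $\mathcal{L}_n=\mathrm{G}/\Gamma$.

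First, I would compare the two measures. Let $\pi:\mathrm{G}\to\mathcal{L}_n$ be $\pi(g)=g\mathcal{R}^n$, and normalize $m_{\mathrm{G}}$ so that it projects to $m_{\mathcal{L}_n}$ via a fundamental domain. Since $\Gamma=\SL_n(\mathcal R)$ is discrete, every point of $\mathcal{C}$ has a neighborhood $U$ on which $\pi$ is injective. By compactness, $\mathcal{C}$ is covered by finitely many such sets $U_1,\dots,U_N$, with $N=N_{\mathcal{C}}$. On each $U_j$, $\pi$ is a measure-preserving bijection onto its image, up to the fixed normalizing constant. Hence for every measurable $E\subseteq\mathcal{L}_n$,
\[
m_{\mathrm{G}}\left(\mathcal{C}\cap\pi^{-1}(E)\right)\le \sum_{j=1}^N m_{\mathrm{G}}\left(U_j\cap \pi^{-1}(E)\right)\ll_{\mathcal{C}} m_{\mathcal{L}_n}(E).
\]

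Second, I would apply this with
\[
E=\{\Lambda\in\mathcal{L}_n: D(\Lambda,B)\ge T\},
\]
so that $\mathcal{M}^{\mathcal{C}}_{B,T}=\mathcal{C}\cap\pi^{-1}(E)$. Measurability of $E$ follows because $\Lambda\mapsto \#(\Lambda\setminus\{0\})\cap B$ is measurable, being a countable sum of measurable functions once coset representatives are fixed.

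Third, Chebyshev's inequality combined with Lemma \ref{lem:DiscInt} gives
\[
m_{\mathcal{L}_n}(E)\le \frac{1}{T^2}\int_{\mathcal{L}_n}D(\Lambda,B)^2\,dm_{\mathcal{L}_n}\le \frac{C_n\operatorname{Vol}(B)}{T^2}.
\]
Combining this with the first step yields \eqref{eqn:M_B,TBND}, with implied constant $C_n N_{\mathcal{C}}$ times the normalizing constant. If $\operatorname{Vol}(B)=\infty$ the statement is vacuous, so we may assume $\operatorname{Vol}(B)<\infty$.

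The only step needing care is the first: the local injectivity of $\pi$ and the resulting finite-multiplicity comparison of measures. This is standard for discrete $\Gamma$, and everything else is immediate.
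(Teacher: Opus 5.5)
Your proposal is correct and follows essentially the paper's argument. The paper only says the bound follows from Lemma~\ref{lem:DiscInt} and refers to the proof of \cite[Lemma 2.2]{KY}, which is Chebyshev's inequality applied to the second moment of $D(\cdot,B)$ together with the comparison $m_{\mathrm{G}}(\mathcal{C}\cap\pi^{-1}(E))\ll_{\mathcal{C}} m_{\mathcal{L}_n}(E)$ coming from a finite cover of $\mathcal{C}$; your write-up also supplies the details left to that reference (local injectivity from discreteness of $\Gamma$, measurability, and the case $\operatorname{Vol}(B)=\infty$).
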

\begin{proof}
    Equation \eqref{eqn:M_B,TBND} follows directly from Lemma~\ref{lem:DiscInt} (see the proof of \cite[Lemma 2.2]{KY} for details).
\end{proof}

\subsection{Volume Estimate}
\label{subsec:Vol}
For an isotropic quadratic form $Q$ on $\mathcal K_\infty^n$, denote
\begin{equation*}
\Cone(Q)=\left\{\mathbf v\in \mathcal K_\infty^n: Q(\mathbf v)=0 \right\},
\end{equation*}
followed from the light cone of an indefinite quadratic form over the real field.
\begin{theorem}\label{volume estimate}
Let $Q_0$ be a non-degenerate isotropic quadratic form given as
\begin{equation}\label{standard quadratic form}
Q_0(v_1, \ldots, v_n)=2v_1v_n+Q'(v_2, \ldots, v_{n-1}),
\end{equation}
where $Q'$ is some quadratic form of $(n-2)$ variables. Assume that the absolute values of all the coefficients of $Q_0$ are bounded by $q^N$ for some $N\ge 0$.
Let $h\in C_c^{\infty}(\mathcal K_\infty^n)$ be such that $\supp h \subseteq x^{R}\mathcal O^n\setminus x^{r-1}\mathcal O^n$, where $r\le R\in \mathbb Z$, and
\begin{equation}\label{eqn: Sobolev}
h(\mathbf v + x^{-K}\mathbf{u})=h(\mathbf v), \;\forall \mathbf v\in \mathcal K_\infty^n,\forall \mathbf{u}\in \mathcal{O}^n
\end{equation}
for some positive integer $K\in \mathbb N$. We may assume that $K\ge -r$.
Let $I=a+x^b\mathcal O\subseteq \mathcal K_\infty$ and let $B=\max\{\log_q|a|, b\}$. Then, for any $t> \frac{\max\left\{-2r+B,N+B-b,-3r+B+K\right\}} 2$, we have
\begin{equation}\begin{split}
\label{eqn:smoothVolEst}
\int_{\mathcal K_\infty^n} h(x^{-t}\mathbf v) \chi^{}_{I}(Q_0(\mathbf v)) d\mathbf v
&=J_{h,Q_0}q^{b}q^{(n-2)t},\\
\end{split}\end{equation}
where $J_{h,Q_0}>0$ is a constant depending only on $h$ and $Q_0$.

As a consequence, for any quadratic form $Q\in \{Q_0^g: g\in \SL_n(\mathcal K_\infty)\}$ there exists a constant $c_Q>0$, such that for any $I$ as above and for any $T>\frac{1}{2}\max\left\{0,B,N+B-b,B+K\right\}$, where $B,N,K$ are as above, we have
\begin{equation}
\label{eqn:VolEst}
\vol\left\{\mathbf v \in \mathcal K_\infty^n: \|\mathbf v \|\leq q^T\;\text{and}\; Q(\mathbf v)\in I\right\}
=c_Q \vol(I)q^{(n-2)T}+O_{Q,I}\left(1\right).
\end{equation}
\end{theorem}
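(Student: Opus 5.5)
Substituting $\mathbf v=x^t\mathbf w$ turns the left-hand side of \eqref{eqn:smoothVolEst} into
\[
q^{nt}\int h(\mathbf w)\chi_I(x^{2t}Q_0(\mathbf w))\,d\mathbf w=q^{nt}\int h(\mathbf w)\chi_{x^{-2t}I}(Q_0(\mathbf w))\,d\mathbf w .
\]
Once $t$ is large, $x^{-2t}I=x^{-2t}a+x^{b-2t}\mathcal O$ is a tiny ball around a point of size $q^{\log_q|a|-2t}\le q^{B-2t}$. I would use the coordinates adapted to $Q_0$: write $\mathbf w=(w_1,\mathbf w',w_n)$ with $\mathbf w'\in\mathcal K_\infty^{n-2}$. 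Since $\supp h\subseteq x^R\mathcal O^n\setminus x^{r-1}\mathcal O^n$, every $\mathbf w$ in the support satisfies $q^r\le\|\mathbf w\|\le q^R$.

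The region splits into two parts according to whether $\max\{|w_1|,|w_n|\}$ is comparable to $\|\mathbf w\|$.

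\emph{Main part.} Here one of $w_1,w_n$, say $w_1$, has $|w_1|$ comparable to $\|\mathbf w\|$, so $|w_1|\geq q^r$ (up to a bounded loss). On this part the map $w_n\mapsto Q_0(\mathbf w)=2w_1w_n+Q'(\mathbf w')$ is affine with derivative $2w_1$, and $|2|=1$ because $q$ is odd. Hence the preimage of the ball $x^{-2t}I$ in the $w_n$-variable is a ball of radius $q^{b-2t}/|w_1|$.

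The smoothness condition \eqref{eqn: Sobolev} says that $h$ is constant on balls of radius $q^{-K}$. If $q^{b-2t}/|w_1|\le q^{b-2t-r}\le q^{-K}$, the $w_n$-integral is exactly $h(w_1,\mathbf w',w_n^*)\,q^{b-2t}/|w_1|$. Here $w_n^*$ is any point of the preimage, and we may take $w_n^*=-Q'(\mathbf w')/(2w_1)$ once the shift $x^{-2t}a/(2w_1)$ is smaller than $q^{-K}$. This is where the conditions $2t>B-2r+\dots$ and $2t>-3r+B+K$ come from; the coefficient bound $q^N$ enters in bounding $|Q'(\mathbf w')|$ and the derivative contributions, giving the term $N+B-b$.

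The result is exact: the main part equals
\[
q^{nt}\,q^{b-2t}\int h(w_1,\mathbf w',-Q'(\mathbf w')/2w_1)\,|w_1|^{-1}\,dw_1\,d\mathbf w',
\]
with no error term, because of the ultrametric property. The case where $w_n$ is the large variable is handled symmetrically.

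\emph{Remaining part.} When both $|w_1|$ and $|w_n|$ are much smaller than $\|\mathbf w\|$, the large coordinate lies in $\mathbf w'$. Then $Q_0(\mathbf w)\in x^{-2t}I$ forces $\mathbf w$ to be close to the cone of $Q'$ on $\mathbf w'$. I would handle this by a linear change of variables: an $\SO(Q_0)$-type or $\mathcal O$-unimodular change that makes some coordinate pairing hyperbolic. The cleaner route, which I would commit to, is to reduce to the case where $h$ is supported on the set where $\max(|w_1|,|w_n|)=\|\mathbf w\|$. One covers $\supp h$ by finitely many balls, and on each ball applies an element of $\SL_n(\mathcal O_\infty)$ preserving the form class (Witt's theorem, Theorem~\ref{thm: Witt}) that moves an isotropic vector near that ball to $e_1$. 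This works because for $\mathbf w$ near the cone, a nearby isotropic vector exists by a Hensel-type argument. Unimodular changes preserve volume and, after adjusting $N$ and $K$, the hypotheses, so the same constancy computation yields $q^bq^{(n-2)t}$ times a constant.

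Setting $J_{h,Q_0}$ to be the sum of these fibre integrals gives \eqref{eqn:smoothVolEst}; positivity holds because $h\geq0$ is nonzero near some point of the cone. This decomposition is the main obstacle.

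\emph{Consequence \eqref{eqn:VolEst}.} For $Q=Q_0\circ g$, substitute $\mathbf v\mapsto g^{-1}\mathbf v$, which has unit Jacobian since $|\det g|=1. Write the ball $\{\|\mathbf v\|\le q^T\}$ as the disjoint union of the shells $x^{s}(\mathcal O^n\setminus\mathfrak m\mathcal O^n)$ for $s\le T$. Each shell is $x^s$ applied to a fixed $h$, namely the indicator of the unit shell composed with $g$, which is locally constant. Applying \eqref{eqn:smoothVolEst} with $t=s$ for all $s$ above the threshold gives
\[
\sum_{s\le T} J\,q^b q^{(n-2)s}=c_Q\vol(I)q^{(n-2)T}+O(1).
\]
Here $c_Q=J\sum_{m\ge0}q^{-(n-2)m}$. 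The finitely many shells below the threshold, together with the bounded ball, contribute $O_{Q,I}(1)$.
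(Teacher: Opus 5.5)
Your proof is correct in substance, but it is organized differently from the paper's.

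\textbf{The paper's route.} The paper first rescales each shell to the unit shell. There it applies Theorem~\ref{thm: Witt} \emph{pointwise}: every $\mathbf w$ with $\|\mathbf w\|=1$ and small $Q_0(\mathbf w)=c$ equals $k(\mathbf e_1+\tfrac c2\mathbf e_n)$ for some $k\in\mathrm K$. This gives the global tube description $C_t(A,I)=C_t'(A,I)$, a cone point $\mathbf u_1$ plus a displacement $x^{-2t}\mathbf u_2$. The volume is then obtained by counting residue classes modulo $x^{-\ell}$ and dividing by the multiplicity of the representation, so that $J_{h,Q_0}=\lim_\ell \#[A\cap\Cone(Q_0)]_\ell/q^{(n-1)\ell}$.

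\textbf{Your route.} You use Witt only to choose charts. After each small ball meeting the cone is moved by an element of $\mathrm K$ to a neighbourhood of $x^z\mathbf e_1$, the form $Q_0=2w_1w_n+Q'(\mathbf w')$ is exactly affine in $w_n$ with $|2w_1|=q^z$. Fubini then gives each fibre as an exact ball of radius $q^{b-2t-z}$. It also gives $J_{h,Q_0}$ as an explicit Leray-type integral $\int h(w_1,\mathbf w',-Q'(\mathbf w')/2w_1)\,|w_1|^{-1}\,dw_1\,d\mathbf w'$, summed over charts.

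\textbf{What each buys.} Your route treats all shells at once and avoids the multiplicity bookkeeping in the proof of Theorem~\ref{thm:VolR=0}. It also shows that no quadratic remainder needs controlling, so the coefficient bound $q^N$ is never used. The paper's route instead gives a chart-free description of $J_{h,Q_0}$ as the natural measure of $A\cap\Cone(Q_0)$.

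\textbf{The transporting element.} It must lie in $\mathrm K=\SO(Q_0)\cap\SL_n(\mathcal O_\infty)$, which is exactly what Theorem~\ref{thm: Witt} provides. It sends an isotropic vector of norm $q^z$ in the ball (take radius $<q^r$) to $x^z\mathbf e_1$, not to $\mathbf e_1$. An element that merely preserves the form class could introduce a $w_n^2$ term and destroy the exact affine fibre. With $k\in\mathrm K$, neither $Q_0$ nor the $x^{-K}\mathcal O^n$-invariance changes, so no adjustment of $N$ or $K$ is needed.

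\textbf{The nearby isotropic vector.} You do not need Hensel. The decomposition $\mathbf w=k(x^z\mathbf e_1+\tfrac c2x^{-z}\mathbf e_n)$ shows that $kx^z\mathbf e_1$ is isotropic and within distance $|c|q^{-z}$ of $\mathbf w$. Hence a ball of radius $\rho$ containing no isotropic vector contributes nothing once $2t\ge B-r-\log_q\rho$.

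\textbf{The thresholds.} You assert them rather than derive them. Your computation actually needs $2t\ge B-2r$ (so the Witt decomposition is valid) and $2t\ge B+K-r$ (so the fibre radius and the shift $x^{-2t}a/2w_1$ are at most $q^{-K}$); $N$ does not enter. This is also what the paper's rescaling really yields, namely $-z+B+K$ with $z\ge r$. The $-3r+B+K$ in the statement appears to be a slip, and it is too weak when $r>0$. For example, take $n=3$, $Q_0=2v_1v_3+v_2^2$, $h=\mathbf 1_{x\mathbf e_1+x^{-2}\mathcal O^3}$, $K=2$, $I=x^2\mathcal O$ and $t=1$. The integral is $q^{-3}$, while the large-$t$ constant $J_{h,Q_0}=q^{-5}$ predicts $q^{-2}$. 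So do not claim your argument reproduces the stated threshold.
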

\begin{remark}
The condition \eqref{eqn: Sobolev} for a smooth function $h$ substitutes the property regarding the Sobolev norm used in the statement of \cite[Lemma 3.1]{KY}. 
\end{remark}

We first prove the theorem in the special case where $R=r=0$.
\begin{theorem}
\label{thm:VolR=0}
    Let $Q_0$ be as in Theorem \ref{volume estimate}. Let $h\in C_c^{\infty}(\mathcal{K}_{\infty}^n)$ be such that $\operatorname{supp}(h)\subseteq \mathcal{O}^n\setminus \mathfrak{m}^n$, and assume that there exists some $K\in \mathbb{N}$, such that \eqref{eqn: Sobolev} holds. Let $I=a+x^{b}\mathcal{O}\subseteq \mathcal{K}_{\infty}$, and let $B=\max\{\log_q\vert a\vert,b\}$. Then, there exists $J_{h,Q_0}$, such that for any $t>\frac{1}{2}\max\{B,N+B-b,B+K\}$, we have
    $$\int_{\mathcal{K}_{\infty}^n}h(x^{-t}\mathbf{v})\chi_I(Q_0(\mathbf{v}))d\mathbf{v}=J_{h,Q_0}q^bq^{(n-2)t}.$$
\end{theorem}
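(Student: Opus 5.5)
We work directly with the explicit form $Q_0(\mathbf v)=2v_1v_n+Q'(v_2,\dots,v_{n-1})$. The plan is to rescale and then integrate out the variable $v_n$, which enters $Q_0$ linearly. Since $\operatorname{supp}h\subseteq\mathcal O^n\setminus\mathfrak m^n$, the substitution $\mathbf v=x^t\mathbf w$ turns the integral into
\[
q^{nt}\int_{\mathcal O^n\setminus\mathfrak m^n}h(\mathbf w)\,\chi_I\big(x^{2t}Q_0(\mathbf w)\big)\,d\mathbf w ,
\]
where we use $d\mathbf v=q^{nt}d\mathbf w$. The condition $x^{2t}Q_0(\mathbf w)\in a+x^b\mathcal O$ is equivalent to $Q_0(\mathbf w)\in x^{-2t}a+x^{b-2t}\mathcal O$. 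This is a ball of radius $q^{b-2t}$ around a point of size at most $q^{B-2t}$. Because $t>B/2$, this ball is a tiny neighbourhood of $0$.

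We split the domain $\mathcal O^n\setminus\mathfrak m^n$ according to which coordinate has absolute value $1$. On the piece where $|w_1|=1$ (and symmetrically $|w_n|=1$), we fix $w_1,\dots,w_{n-1}$ and change variables $s=Q_0(\mathbf w)=2w_1w_n+Q'(w_2,\dots,w_{n-1})$ in $w_n$. Since $q$ is odd, $|2w_1|=1$, so this map is a measure-preserving affine bijection of $\mathcal K_\infty$. The smoothness condition \eqref{eqn: Sobolev} says $h$ is constant on cosets of $x^{-K}\mathcal O^n$. The $w_n$-values corresponding to $s$ in the target ball lie within $q^{\max(B,b)-2t}=q^{B-2t}$ of the point $w_n^{0}=-Q'(w_2,\dots,w_{n-1})/(2w_1)$. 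The hypothesis $t>(B+K)/2$ gives $B-2t<-K$, so $h$ takes the single value $h(w_1,\dots,w_{n-1},w_n^{0})$ on this set. The inner integral is therefore exactly $q^{b-2t}\,h(w_1,\dots,w_{n-1},w_n^0)$.

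Summing the pieces, the integral equals $q^{nt}q^{b-2t}J_{h,Q_0}=J_{h,Q_0}q^bq^{(n-2)t}$. Here $J_{h,Q_0}$ is the integral of $h$ restricted to the cone $\operatorname{Cone}(Q_0)$, computed in these local coordinates; it is independent of $t$, $a$ and $b$, and it is positive whenever $h\ge0$ is nonzero near the cone.

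The main obstacle is the remaining region where $|w_1|,|w_n|<1$ but some middle coordinate $|w_i|=1$, $2\le i\le n-1$. There we cannot solve for $w_n$ with unit Jacobian. We would argue that on this region $|Q'(w_2,\dots,w_{n-1})|$ is bounded below by a constant $q^{-N'}$ depending on $Q'$ and $N$ when $Q'$ is anisotropic, so the condition $|Q_0(\mathbf w)|\le q^{B-2t}$ forces $|2w_1w_n|=|Q'|$. The hypothesis $t>(N+B-b)/2$ is designed to guarantee this. Then $w_1$ or $w_n$ has size at least $q^{-N'/2}$, and we solve for the other variable with Jacobian $|2w_1|^{-1}$ or $|2w_n|^{-1}$. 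We run the same locally-constant argument at this smaller scale, again justified by the bounds on $t$. This contributes an additional $t$-independent multiple of $q^{b-2t}$.

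If $Q'$ is itself isotropic, we would first apply Witt decomposition (Theorem~\ref{thm: Witt}) to split off further hyperbolic planes and repeat the argument. The most delicate bookkeeping is checking that all thresholds are covered by $\max\{B,N+B-b,B+K\}$.
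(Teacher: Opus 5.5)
You handle the rescaling and the fibre computation on the charts $\{|w_1|=1\}$ (solve for $w_n$) and $\{|w_1|<1,\ |w_n|=1\}$ (solve for $w_1$) correctly. They need only $2t>B+K$ and are more transparent than the paper's counting modulo $x^{-\ell}$. For $n=3$ with $Q'=cv_2^2$, $0\neq c\in\mathcal R$, they even finish the proof, since on the rest of the shell $|Q_0(\mathbf w)|=|c|\ge 1>q^{B-2t}$.

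\textbf{The gap.} The gap is the remaining region $\{|w_1|<1,\ |w_n|<1\}$, which for $n\ge 4$ can genuinely meet $\Cone(Q_0)$. For $n=4$, take $\mathbf e_2$ when $Q'=2v_2v_3$. Or take $(x^{-1},1,x^{-1},\tfrac{\epsilon}{2}x^{-1})$ when $Q'=(v_2-xv_3)^2-\epsilon v_3^2$ with $\epsilon\in\mathbb F_q^\times$ a non-square; this $Q'$ is anisotropic, yet equals $-\epsilon x^{-2}$ at the unit vector $(1,x^{-1})$. Your treatment of this region is only a sketch and does not go through as written.

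\emph{Anisotropic case.} You need $q^{B-2t}<\min_{\|\mathbf w'\|=1}|Q'(\mathbf w')|$, and you need $w_n$-fibres of radius $q^{B-2t}/|w_1|\le q^{-K}$. Neither follows from $t>\frac12\max\{B,N+B-b,B+K\}$. The condition $2t>N+B-b$ only says $B-2t<b-N$, and the only absolute bound available is $B-2t<-K$. So at best you get the identity beyond a larger, $Q'$-dependent threshold.

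\emph{Isotropic case.} Theorem~\ref{thm: Witt} is a transitivity statement, not a Witt decomposition. Splitting a hyperbolic plane off $Q'$ requires a change of variables in $\GL_{n-2}(\mathcal K_\infty)$. In general this does not preserve the sup-norm, the shell $\mathcal O^n\setminus\mathfrak m^n$, or the $x^{-K}\mathcal O^n$-invariance of $h$, so the chart argument cannot simply be repeated.

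\textbf{How the paper handles it.} The paper never uses charts. By Theorem~\ref{thm: Witt}, every $\mathbf w$ in the shell with $x^{2t}Q_0(\mathbf w)\in I$ equals $k\bigl(\mathbf e_1+\tfrac{Q_0(\mathbf w)}{2}\mathbf e_n\bigr)$ for some $k\in\mathrm K=\SO(Q_0)\cap\SL_n(\mathcal O)$. The group $\mathrm K$ preserves $Q_0$, $\|\cdot\|$, $\vol$ and $x^{-K}\mathcal O^n$. This yields $\mathbf w=\mathbf u_1+x^{-2t}\mathbf u_2$ with $\mathbf u_1=k\mathbf e_1\in\Cone(Q_0)$, and all fibre counts are done at $\mathbf e_1$. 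Granting that theorem, you could close your gap the same way. Applying $k^{-1}$ maps each $x^{-K}\mathcal O^n$-coset of the shell that meets $\{x^{2t}Q_0\in I\}$ onto $\mathbf e_1+x^{-K}\mathcal O^n$, which lies inside your chart $\{|w_1|=1\}$. This works with the stated thresholds.

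\textbf{A caveat about Theorem~\ref{thm: Witt}.} That transitivity forces $\|\nabla Q_0\|=\|\nabla Q_0(\mathbf e_1)\|=1$ at every norm-one point of $\Cone(Q_0)$. This is because $\nabla Q_0(k\mathbf v)=(k^{-1})^t\nabla Q_0(\mathbf v)$ and $(k^{-1})^t\in\SL_n(\mathcal O)$ preserves norms. It fails, e.g., for $Q_0=2v_1v_3+x^2v_2^2$ at $(1,x^{-1},-\tfrac12)$, where the gradient is $(-1,2x,2)$. Your chart handles this point correctly, since it only uses $|\partial_nQ_0|=|2w_1|=1$.

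\textbf{A completion that avoids it.} Cover the shell by finitely many small cosets on each of which one fixed $|\partial_iQ_0|$ is maximal and constant; this is possible since $\nabla Q_0$ does not vanish on the shell. Then solve for $w_i$ on each coset via the ultrametric inverse function theorem. This gives the identity with $J_{h,Q_0}$ equal to the integral of $h$ over $\Cone(Q_0)$ against $d\mathbf w/|dQ_0|$. However, it holds only for $t$ beyond a threshold that also depends on how small $\|\nabla Q_0\|$ gets on the norm-one cone.
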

To prove Theorem~\ref{thm:VolR=0}, we will use the following function field analogue of Witt's theorem, whose proof is delayed to \cref{sec:Witt} to continue the flow of this section.
\begin{theorem}\label{thm: Witt}
    Let $\alpha\in \mathcal{K}_{\infty}$ and let $z\in\mathbb{Z}$. The group $\mathrm{K}=\operatorname{SO}(Q)\cap \operatorname{SL}_n(\mathcal{O}_{\infty})$ acts transitively on the set
    $$L(\alpha,q^z):=\left\{\mathbf{v}\in \mathcal{K}_{\infty}^n:Q(\mathbf{v})=\alpha,\Vert \mathbf{v}\Vert=q^z\right\}.$$
\end{theorem}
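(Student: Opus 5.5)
The plan is to reduce to a statement about primitive vectors in the quadratic $\mathcal O_\infty$-lattice $(\mathcal O_\infty^n,Q)$, and then to prove an integral (Eichler/Witt type) transitivity statement for such vectors using that $q$ is odd.

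\textbf{Standing hypothesis.} Throughout I take $Q=Q_0=2v_1v_n+Q'$ as in \eqref{standard quadratic form}, and I assume $Q'$ is unimodular over $\mathcal O_\infty$, i.e.\ its Gram matrix lies in $\GL_{n-2}(\mathcal O_\infty)$. I expect the statement to require some such normalization. For an arbitrary $Q'$ with coefficients in $\mathcal R$, the lattice $\mathcal O_\infty^n$ can have a non-trivial Jordan decomposition, and then transitivity on each sphere $L(\alpha,q^z)$ can genuinely fail. If the paper's setting does not give unimodularity, the first step would be to use Witt's theorem over $\mathcal K_\infty$ to conjugate $Q$ into a form whose restriction to $\mathcal O_\infty^n$ is unimodular.

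\textbf{Step 1 (reduction to primitive vectors).} Let $\mathbf v,\mathbf v'\in L(\alpha,q^z)$. Write $\mathbf v=x^{z}\mathbf w$ and $\mathbf v'=x^{z}\mathbf w'$ with $\|\mathbf w\|=\|\mathbf w'\|=1$, so that $\mathbf w,\mathbf w'$ are primitive in $\mathcal O_\infty^n$. They have the same value $\beta:=Q(\mathbf w)=Q(\mathbf w')=x^{-2z}\alpha$. Since $\mathrm K$ acts linearly, it suffices to find $k\in \mathrm K$ with $k\mathbf w=\mathbf w'$. Note that $\beta\in\mathcal O_\infty$ automatically, because $Q$ has integral coefficients and $\|\mathbf w\|=1$.

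\textbf{Step 2 (normal form for primitive vectors).} I would show that every primitive $\mathbf w$ with $Q(\mathbf w)=\beta$ can be moved by an element of $\mathrm O(Q)\cap\GL_n(\mathcal O_\infty)$ to the fixed vector
\[
\mathbf e_\beta:=\mathbf e_1+\tfrac{\beta}{2}\mathbf e_n .
\]
This is where $q$ odd enters, since $\tfrac12\in\mathcal O_\infty$. Let $\mathsf B$ be the bilinear form of $Q$. Unimodularity and primitivity of $\mathbf w$ make $\mathsf B(\mathbf w,\cdot)\colon\mathcal O_\infty^n\to\mathcal O_\infty$ surjective, so there is $\mathbf u\in\mathcal O_\infty^n$ with $\mathsf B(\mathbf w,\mathbf u)=1$. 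Replacing $\mathbf u$ by $\mathbf u-c\mathbf w$ for a suitable $c\in\mathcal O_\infty$, solvable since $q$ is odd, I can arrange $Q(\mathbf u)=0$ whenever $\beta$ is small enough. In general I would instead work directly with the plane $P=\mathcal O_\infty\mathbf w+\mathcal O_\infty\mathbf u$. Its Gram determinant $\beta Q(\mathbf u)-1$ is a unit, so $P$ is a unimodular plane of discriminant $-1$, i.e.\ a hyperbolic plane over $\mathcal O_\infty$ by Hensel's lemma. Hence
\[
\mathcal O_\infty^n=P\perp P^\perp ,
\]
with $P^\perp$ unimodular. Applying the same decomposition to $\mathbf e_\beta$ and the plane $\mathcal O_\infty\mathbf e_1+\mathcal O_\infty\mathbf e_n$, it remains to identify $P^\perp$ with $(\mathcal O_\infty\mathbf e_1+\mathcal O_\infty\mathbf e_n)^\perp$. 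Both are unimodular over $\mathcal O_\infty$ of the same rank and discriminant, and such lattices are classified by rank and discriminant (reduce mod $\mathfrak m$ and lift by Hensel). Gluing the isometry $P\to \mathcal O_\infty\mathbf e_1+\mathcal O_\infty\mathbf e_n$ that sends $\mathbf w\mapsto\mathbf e_\beta$ with this identification gives $g\in\mathrm O(Q)\cap\GL_n(\mathcal O_\infty)$ with $g\mathbf w=\mathbf e_\beta$.

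\textbf{Step 3 (correcting the determinant).} Composing the maps from Step 2 gives $g\in\mathrm O(Q)\cap\GL_n(\mathcal O_\infty)$ with $g\mathbf w=\mathbf w'$, and $\det g=\pm1$. If $\det g=-1$, I compose with the reflection $s_{\mathbf y}$ in a vector $\mathbf y$ with $\mathsf B(\mathbf y,\mathbf w')=0$ and $Q(\mathbf y)\in\mathcal O_\infty^\times$. Such a $\mathbf y$ exists inside the unimodular complement $P'^\perp$ of $\mathbf w'$'s hyperbolic plane, which is nonzero because $n\ge3$ and has a unit-norm vector since it is unimodular. The reflection $s_{\mathbf y}$ is integral because $Q(\mathbf y)$ is a unit, it fixes $\mathbf w'$, and it has determinant $-1$. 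Then $s_{\mathbf y}g\in\mathrm K$ and sends $\mathbf w$ to $\mathbf w'$.

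\textbf{Main obstacle.} I expect Step 2 to be the hard part, specifically the case where $\beta\in\mathfrak m$, which includes isotropic vectors with $\beta=0$. There the easy reflection trick fails: one would like to use the reflection in $\mathbf w-\mathbf w'$ or $\mathbf w+\mathbf w'$, but it is integral only if $Q(\mathbf w\pm\mathbf w')$ is a unit, which need not hold. So I would not rely on reflections and would go straight to the hyperbolic-plane splitting, checking carefully that the Hensel lifting for the classification of unimodular $\mathcal O_\infty$-lattices works in odd characteristic.
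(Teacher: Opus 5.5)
Your route differs from the paper's, and it works once one step is repaired and the hypothesis you flagged is kept.

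\textbf{Comparison with the paper.} The paper assumes a Jordan-type normal form $Y=\operatorname{diag}(Y',x^{-1}Y'')$ and moves $\mathbf v_1$ to $\mathbf e_1$. It then builds $k$ digit by digit modulo $x^{-(j+1)}$: Witt's theorem over $\mathbb F_q$ gives the leading term, each further digit comes from a linear equation $X^tA+AX=C$ (solvable since $2$ is invertible), and $k$ is the inverse limit. You instead split off a unimodular sublattice through $\mathbf w$ and quote the rank--discriminant classification of unimodular $\mathcal O_\infty$-lattices, which packages the Hensel lifting once. Your version is shorter and lands directly on the representative $\mathbf e_1+\frac{\beta}{2}\mathbf e_n$ used in the proof of Theorem~\ref{thm:VolR=0}. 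It also treats $\det k=1$ explicitly and shows exactly where unimodularity enters. The paper's lifting needs a restriction of the same kind: its Step~1 requires an invertible $A_0$ with first column $\operatorname{pr}_i(\mathbf v_0)$, which is impossible when $\operatorname{pr}_i(\mathbf v_0)=0$.

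\textbf{A false claim in Step~2.} The claim is that $\beta Q(\mathbf u)-1$ is a unit and $P$ is hyperbolic for any $\mathbf u$ with $\mathsf B(\mathbf w,\mathbf u)=1$. This holds when $\beta\in\mathfrak m$, since then $1-\beta Q(\mathbf u)\equiv 1$ is a unit square. So the case you call the main obstacle is in fact handled correctly by your argument. It fails for $\beta\in\mathcal O_\infty^\times$: for instance $\mathbf u=\beta^{-1}\mathbf w$ satisfies $\mathsf B(\mathbf w,\mathbf u)=1$, but then $P=\mathcal O_\infty\mathbf w$ has rank one. More generally $1-\beta Q(\mathbf u)$ can be a non-square or lie in $\mathfrak m$.

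The fix is easy. For a unit $\beta$, $\mathcal O_\infty\mathbf w$ is itself unimodular, so $\mathcal O_\infty^n=\mathcal O_\infty\mathbf w\perp\mathbf w^\perp$ and $\mathcal O_\infty^n=\mathcal O_\infty\mathbf e_\beta\perp\mathbf e_\beta^\perp$. Both complements are unimodular of rank $n-1$ with equal discriminant, hence isometric. In Step~3 you then take $\mathbf y\in(\mathbf w')^\perp$.

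For $\beta\in\mathfrak m$, spell out the isometry $P\to\mathcal O_\infty\mathbf e_1+\mathcal O_\infty\mathbf e_n$. Make $\mathbf u$ isotropic by your Hensel step, then divide by the unit $1-c\beta$ to restore $\mathsf B(\mathbf w,\mathbf u)=1$. Send $\mathbf w\mapsto\mathbf e_\beta$ and $\mathbf u\mapsto\mathbf e_n$; the Gram matrices agree.

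\textbf{The hypothesis.} Your unimodularity assumption is genuinely necessary, even for forms as in \eqref{standard quadratic form}. Take $n=4$ and $Q_0=2v_1v_4+x(v_2^2-v_3^2)$. The vectors $\mathbf e_1$ and $\mathbf e_2+\mathbf e_3$ are both isotropic of norm $1$. However, $\mathsf B(\mathbf e_1,\mathcal O_\infty^4)=\mathcal O_\infty$ while $\mathsf B(\mathbf e_2+\mathbf e_3,\mathcal O_\infty^4)=x\mathcal O_\infty$, and this ideal is preserved by $\mathrm K$. So these two vectors lie in different $\mathrm K$-orbits.

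Your fallback of conjugating $Q$ over $\mathcal K_\infty$ to a unimodular form does not help. It replaces $\mathcal O_\infty^n$ by another lattice, and so changes $\Vert\cdot\Vert$, the sets $L(\alpha,q^z)$ and $\mathrm K$. With the fix above, your argument proves the statement for $Q$ unimodular up to a scalar. In the paper's setting, where $Q'$ has coefficients in $\mathcal R$, this is exactly the case $N=0$ of Theorem~\ref{volume estimate}.
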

\begin{proof}[Proof of Theorem \ref{thm:VolR=0}]
Since smooth functions can be approximated with sums of finitely many characteristic functions, we may assume that $h$ is the characteristic function of a Borel set $A\subseteq \mathcal{O}^n\setminus \mathfrak{m}^n$ satisfying
$$A+x^{-K}\mathcal{O}^n=A.$$
By defining $x^{-t}\mathbf v=\mathbf w$, we have
\begin{equation}\label{eqn 1: volume estimate1}
\int_{\mathcal K_\infty^n} h(x^{-t}\mathbf v) \chi^{}_I(Q_0(\mathbf v)) d\mathbf v
=q^{nt}\int_{\mathcal K_\infty^n} h(\mathbf w) \chi^{}_I(x^{2t}Q_0(\mathbf w)) d\mathbf w.
\end{equation}
Set
\[
C_t(A,I)=\left\{\mathbf w\in \mathcal K_\infty^n: \mathbf w \in A\;\text{and}\;  x^{2t}Q_0(\mathbf w)\in I\right\},
\]
so that
\[
\eqref{eqn 1: volume estimate1}= q^{nt} \vol(C_t(A,I)).
\]
\begin{claim}
For every $t>\frac{1}{2}\max\{B+K, B+N\}$, we have
\[
C_t(A,I)=C'_t(A,I),
\]
where
\[
C'_t(A,I)=\left\{\mathbf u_1+ x^{-2t} \mathbf u_2: \begin{array}{c}
\mathbf u_1\in A\cap \Cone(Q_0);\\[0.05in]
\|\mathbf u_2\|\le q^B,\; 2Q_0(\mathbf u_1, \mathbf u_2)\in I
\end{array}\right\},
\]
where by abuse of notation, $Q_0$ denotes the bilinear form corresponding to the quadratic form $Q_0$, that is, $$Q_0(\mathbf v, \mathbf w)=\frac 1 2\left(Q_0(\mathbf v+\mathbf w)-Q_0(\mathbf v)-Q_0(\mathbf w)\right),$$
so that $Q_0(\mathbf{v},\mathbf{v})=Q_0(\mathbf{v})$.
\end{claim}
\begin{proof}
The fact that $C'_t(A,I) \subseteq C_t(A,I)$ follows from the following observations: Let $\mathbf{u}_1+x^{-2t}\mathbf{u}_2\in C_t'(A,I)$.
\begin{itemize}
\item Since $2t>B$ and $\|\mathbf u_2\|\le q^B$, we have 
\[\|x^{-2t} \mathbf u_2\|<1=\|\mathbf u_1\|.\] 
Thus, by \eqref{lem:UM=}, $\Vert \mathbf{u}_1+x^{-2t-z}\mathbf{u}_2\Vert=1$.
\vspace{1pt}
\item Since $2t> B+K$, we have $x^{-2t}\mathbf u_2 \in x^{-K}\mathcal O^n$. Thus, the fact that $\mathbf{u}_1\in A$ and \eqref{lem:UM=} imply that $\mathbf{u}_1+x^{-2t}\mathbf{u}_2\in A$. 
\vspace{1pt}
\item Since $2t\geq N+B-b$, we have 
$$x^{-2t}|Q_0(\mathbf u_2)|\leq q^{-2t+N}\Vert \mathbf{u}_2\Vert^2\le q^{-2t+N+2B}\le q^b.$$ Hence, by \eqref{lem:UM},
\begin{equation}
\begin{split}
\left| x^{2t}Q_0(\mathbf u_1 + x^{-2t}\mathbf u_2)-a\right|
=\left| 2Q_0(\mathbf u_1, \mathbf u_2)+x^{-2t}Q_0(\mathbf u_2)-a\right|\\
\leq \max\left\{\left| 2Q_0(\mathbf{u}_1,\mathbf{u}_2)-a\right|,\left|x^{-2t}Q_0(\mathbf{u}_2)\right|\right\}\leq q^b.
\end{split}
\end{equation}
\end{itemize}
Thus, $x^{2t}Q_0(\mathbf{u}_1+x^{-2t}\mathbf{u}_2)\in I$, so that $\mathbf{u}_1+x^{-2t}\mathbf{u}_2\in C_t(A,I)$. 
\vspace{5pt}

\noindent To prove that $C_t(A,I)\subseteq C_t'(A,I)$, note that by Theorem~\ref{thm: Witt}, the group $\mathrm{K}=\SO(Q_0)\cap \operatorname{SL}_n(\mathcal{O})$ acts transitively on the set $\left\{\mathbf{v}\in \mathcal{K}_{\infty}^n:Q_0(\mathbf{v})=Q_0(\mathbf{w}),\Vert \mathbf{v}\Vert=\Vert \mathbf{w}\Vert\right\}$. Moreover, by \eqref{standard quadratic form},
$$Q_0\left(\mathbf{e}_1+\frac{Q_0(\mathbf{w})}{2}\mathbf{e}_n\right)=2\cdot \frac{Q_0(\mathbf{w})}{2}=Q_0(\mathbf{w}).$$
In addition, note that $$\vert Q_0(\mathbf{w})\vert\leq \max\left\{q^{-2t}\vert a\vert,\left|Q_0(\mathbf{w})-\frac{a}{x^{2t}}\right|\right\}\leq \max\{q^{B-2t},q^{b-2t}\}<1.$$ Thus, by \eqref{lem:UM=}, $\Vert\mathbf{e}_1+Q_0(\mathbf{w})\mathbf{e}_n\Vert=\Vert \mathbf{e}_1\Vert$, and hence, $\|\mathbf u_1+x^{-2t}\mathbf u_2\|=\|\mathbf u_1\|=1$.
Thus, for every $\mathbf w\in C_t(A,I)$, there exists $k\in \mathrm{K}$ such that
\[
k^{-1}\mathbf w=\mathbf{e}_1+\frac{Q_0(\mathbf{w})}{2}\mathbf{e}_n=\mathbf e_1+ x^{-2t}\:\frac {x^{2t}Q_0(\mathbf w)} 2\mathbf e_n.
\]
Set $\mathbf u_1=k\mathbf e_1$ and $\mathbf u_2=\frac {x^{2t}Q_0(\mathbf w)} 2 k\mathbf e_n$.

\begin{itemize} 
\item Since $Q_0(\mathbf w)\in x^{-2t}a+x^{-2t+b}\mathcal O\subseteq x^{-2t+B}\mathcal{O}$,
\[
\|\mathbf u_2\|=\left| \frac {x^{2t}Q_0(\mathbf w)}{2}\right|\cdot \Vert k\mathbf{e}_n\Vert\le q^{2t}q^{-2t+B}\cdot 1=q^B.
\]
\vspace{1pt}
\item Thus, for $2t>B+K$, since $\mathbf u_1=\mathbf w- x^{-2t}\mathbf u_2=k\mathbf{e}_1$ and $\mathbf w\in A$,
\[
\mathbf u_1 \in A \cap \Cone(Q_0).
\]
Clearly, $2Q_0(\mathbf u_1, \mathbf u_2)=Q_0(\mathbf w)\in I$.
\end{itemize}
Hence, $\mathbf{w}=\mathbf{u}_1+x^{-2t}\mathbf{u}_2\in C_t(A,I)$.
\vspace{5pt}

\noindent We now compute the volume of $C'_t(A,I)$. Note that the volume on $\mathcal K_\infty^n$ is the limit of the normalized counting measures of the image set of the projection $\mathcal K_\infty^n \rightarrow \mathcal K_\infty^n/ x^{-\ell}O^n$ for $\ell\in \N$, i.e.,

\begin{equation}
\begin{split}\label{eqn:Vol(C_t'(A,I)))}
&\vol\left(C'_t(A,I)\right)\\
&=\lim_{\ell\rightarrow \infty} \frac 1 {q^{n\ell}} \:
 \#\left\{[\mathbf u_1 + x^{-2t} \mathbf u_2]_\ell: 
 \begin{array}{c}
 \mathbf u_1\in A\cap \Cone(Q_0),\\
 {[\mathbf{u}_2]_{\ell}\in [x^B\mathcal{O}^n]_{\ell}},\; [2Q_0(\mathbf u_1, \mathbf u_2)]_{\ell}\in [I]_{\ell}\end{array}
 \right\}\\[0.1in]
 &=\lim_{\ell\rightarrow \infty} \frac 1 {q^{n\ell}}
\sum_{\scriptsize\begin{array}{r}
[\mathbf{u}_1]_{\ell}\in [\mathcal{O}^n\setminus x^{-1}\mathcal{O}^n]_{\ell}\\
\cap [A\cap \operatorname{Cone}(Q_0)]_{\ell}\end{array}}
\frac{\# 
 \left\{[x^{-2t}\mathbf u_2]_\ell\in [x^{B-2t}\mathcal{O}^n]_{\ell}: 2Q_0(\mathbf u_1, \mathbf u_2)\in I \right\}
}{\# 
    \left\{\begin{array}{l}[(\mathbf{v}_1,\mathbf{v}_2)]_{\ell}\in [A\cap \operatorname{Cone}(Q_0)]_{\ell}\times [x^B\mathcal{O}^n]_{\ell}: \\[0.05in]
\hspace{0.2in}\mathbf v_1+x^{-2t}\mathbf v_2\in C'_t(A,I)\;\text{s.t.}\\[0.05in]
\hspace{1.2in}\mathbf v_1+x^{-2t}\mathbf v_2=\mathbf u_1+x^{-2t}\mathbf u_2\end{array}\right\}
},
\end{split}\end{equation}
where $[\mathbf u]_\ell$ is the equivalence class of $\mathbf u\in \mathcal K_\infty^n $ modulo $x^{-\ell}$ and $[A]_{\ell}=\{[\mathbf v]_\ell: \mathbf u\in A\}$. We now compute the numerator and denominator of the last line of \eqref{eqn:Vol(C_t'(A,I)))} through the following two facts.

\vspace{0.1in}
\noindent i) For any $\mathbf u_1\in A\cap \Cone(Q_0)$ and $\ell>2t-b$, we have 
\[
\#\{[x^{-2t}\mathbf u_2]_\ell: 2Q_0( \mathbf u_1, \mathbf u_2)\in I\}=q^{(-2t+\ell)n+B(n-1)}\cdot \vol(I).
\] 
\begin{proof}
By Theorem \ref{thm: Witt}, $\mathrm{K}$ acts transitively on $\left\{\mathbf{v}:\Vert \mathbf{v}\Vert=1\right\}\cap \Cone(Q_0)$. Thus, there exists $k\in \mathrm{K}$, such that $\mathbf{u}_1=k\mathbf{e}_1$. Hence, by replacing $A$ with the set $kA$,  we can assume that $\mathbf u_1=\mathbf e_1$. 

Denote $\mathbf u_2=(u_1, \ldots, u_n)$. By \eqref{standard quadratic form},
\[
2Q_0(\mathbf e_1, \mathbf u_2)\in I
\quad\Leftrightarrow\quad
2u_n\in I,
\]
that is, $u_n\in \frac a 2  + x^{b}\mathcal O$. Thus 
\[\begin{split}
&\#\{[x^{-2t}\mathbf u_2]_\ell: [2Q_0( \mathbf u_1, \mathbf u_2)]_{\ell}\in [I]_{\ell}\}
=\#\left[x^{-2t}\left(x^B\mathcal O^{n-1}\times \left(\frac a 2 +x^{b}\mathcal O\right)\right)\right]_{\ell}\\
&\hspace{0.4in}=q^{(-2t+\ell)n}\cdot q^{B(n-1)} \cdot q^{b}=q^{(-2t+\ell)n}\cdot q^{B(n-1)} \cdot \:\vol(I).
\end{split}\]
\end{proof}

\vspace{0.1in}
\noindent ii) For any $\mathbf u_1,\mathbf v_1$ in $A\cap \Cone(Q_0)$ and $\mathbf{u}_2,\mathbf{v}_2\in x^B\mathcal{O}^n$, such that $\mathbf{u}_1+x^{-2t}\mathbf{u}_2\in C_t'(A,I)$, we have
\begin{equation}
\label{eqn:u_1=v_1 modx^-2t+B}
\mathbf u_1 + x^{-2t}\mathbf u_2=\mathbf v_1 + x^{-2t}\mathbf v_2\;
\quad\Leftrightarrow\quad
\mathbf u_1\equiv \mathbf v_1 \mod x^{-2t+B}.
\end{equation}
By Theorem \ref{thm: Witt}, by replacing $A$ with $kA$ for some $k\in \mathrm{K}$, we can assume that $\mathbf u_1=\mathbf e_1$ and compute the cardinality of the equivalence class of $\mathbf e_1$ modulo $x^{-2t+B}$ which intersects $\Cone(Q_0)$. Thus, by \eqref{eqn:u_1=v_1 modx^-2t+B},
\begin{equation}\label{eqn 2: volume estimate1}\begin{split}
&\# \left\{\left([\mathbf v_1]_{\ell},[x^{-2t}\mathbf v_2]_\ell\right): \begin{array}{c}
[\mathbf v_1+x^{-2t}\mathbf v_2]_{\ell}\in [C'_t(A,I)]_{\ell}\;\text{s.t.}\\[0.05in]
[\mathbf v_1+x^{-2t}\mathbf v_2]_{\ell}=[\mathbf u_1+x^{-2t}\mathbf u_2]_{\ell}\\
\text{ and } [\mathbf{v}_1]_{\ell}\in [\operatorname{Cone}(Q_0)]_{\ell}\end{array}\right\}\\[0.05in]
&\hspace{1in}=\#\left\{[ \mathbf e_1+x^{-2t+B}\mathbf z]_\ell: \|\mathbf z\|\le 1 \right\}\cap \Cone(Q_0).
\end{split}\end{equation}

By expanding $Q_0( \mathbf e_1+x^{-2t+B}\mathbf z)=0$, the vector $\mathbf z=(z_1, \ldots, z_n)$ must satisfy the following equation 
\[
x^{-2t+B}z_n=\frac {-Q'(x^{-2t+B}z_2, \ldots,x^{-2t+B}z_{n-1})}{2(1+x^{-2t+B}z_1)}.
\]
Hence 
\[
\eqref{eqn 2: volume estimate1}=q^{(-2t+B+\ell)(n-1)}.
\]

As a consequence, 
\[\begin{split}
\eqref{eqn 1: volume estimate1}
&=q^{nt}\vol\left(C'_t(A,I)\right)\\
&=\lim_{\ell\rightarrow \infty}\frac {q^{nt}}{q^{n\ell}}\cdot \frac{\#[A\cap \Cone(Q_0)]_\ell\times q^{(-2t+\ell)n+B(n-1)}\cdot \vol(I)}{q^{(-2t+B+\ell)(n-1)}}\\
&=q^{(n-2)t}\: \vol(I)\lim_{\ell\rightarrow \infty} \frac {\#[A\cap \Cone(Q_0)]_\ell}{q^{(n-1)\ell}}
\end{split}\]
\end{proof}
The asymptotic formula \eqref{eqn:smoothVolEst} follows if we define
\[
J_{h,Q_0}=\lim_{\ell\rightarrow \infty} \frac {\#[A\cap \Cone(Q_0)]_\ell}{q^{(n-1)\ell}},
\]
where $J_{h,Q_0}$ is well-defined for general $h\in C_c(\mathcal O^n_\infty\setminus x^{-1}\mathcal O^n)$.
\end{proof}
\begin{proof}[Proof of Theorem \ref{volume estimate}]
We may assume that $h$ is the characteristic function of a Borel set $A\subseteq \mathcal K_\infty^n$ satisfying that
\[
A+ x^{-K}\mathcal O^n=A
\]
and that $\supp h\subseteq x^R\mathcal O^n\setminus x^{r-1}\mathcal O^n$.
For each $r\le z \le R$, let 
\[
h_z(\mathbf v)=h(\mathbf v)\cdot \chi^{}_{x^z\mathcal O^n\setminus x^{z-1}\mathcal O^n}(\mathbf v).
\]
Define $h'_z(\mathbf v):=h_z(x^{z}\mathbf v)$ for all $\mathbf v\in \mathcal K_\infty^n$ so that $\supp h'_z\subseteq \mathcal O^n\setminus \mathfrak{m}^n$ for all $r\le z\le R$.
It follows that
\[\begin{split}
\int_{\mathcal K_\infty^n} h(x^{-t}\mathbf v) \chi_I(Q_0(\mathbf v))d\mathbf v
&=\int_{\mathcal K_\infty^n} \sum_{z=r}^R h_z(x^{-t}\mathbf v) \chi_I (Q_0(\mathbf v)) d\mathbf v\\
&=\sum_{z=r}^R q^{zn}\int_{\mathcal K_\infty^n} h'_z(x^{-t}\mathbf v)\chi^{}_{I_z}(Q_0(\mathbf v)) d\mathbf v,\\
\end{split}\]
where $I_z=x^{-2z}a+x^{b-2z}\mathcal O\subset \mathcal K_\infty$. Moreover, for each $z$,
\[\begin{gathered}
h'_z(\mathbf v+x^{-K-z} \mathcal O^n)
=h_z(x^{z}\mathbf v + x^{-K}\mathcal O^n)
=h_z(x^{z}\mathbf v)
=h'_z(\mathbf v),\quad\forall \mathbf v\in \mathcal K^n_\infty.\\
\end{gathered}\]
Applying \eqref{eqn:smoothVolEst} to each $h'_z$, if $2t>\max\{-2z+B, N+B-b, -z+B+K\}$, we have
\[
\int_{\mathcal K_\infty^n} h'_z(x^{-t}\mathbf v) \chi^{}_{I_z}(Q_0(\mathbf v)) d\mathbf v
=J_{h'_z,Q_0}q^{b-2z}q^{(n-2)t}.
\]
Thus, if $2t>\max\{-2r+B, N+B-b,-3r+B+K\}$, one has
\[
\int_{\mathcal K_\infty^n} h(x^{-t}\mathbf v) \chi_I(Q_0(\mathbf v))d\mathbf v
=\sum_{z=r}^R q^{zn} J_{h'_z,Q_0}q^{b-2z}q^{(n-2)t}.
\]
In conclusion, if $2t>\max\{-2r+B, N+B-b,-3r+B+K\}$, the asymptotic formula \eqref{eqn:smoothVolEst} holds
if we take
\begin{equation}
\label{eqn:J_Q}
J_{h,Q_0}=\sum_{z=r}^R q^{z(n-2)}J_{h'_z,Q_0}.
\end{equation}
\vspace{0.1in}
Let $Q=Q_0\circ g$ for some $g\in \SL_n(\mathcal K_\infty)$. Let $h=\chi^{}_{g(\mathcal O^n\setminus x^{-1}\mathcal O^n)}$, so that for $$t_Q=\frac{1}{2}\max\{-2r+B,N+B-b,-3r+B+K\}\in \mathbb N$$ and $J_{Q}>0$ defined as in \eqref{eqn:J_Q}, for every $t\ge t_Q$, we have
\[
\vol\left\{\mathbf v\in \mathcal K_\infty^n: \|\mathbf v\|=q^t,\; Q(\mathbf v) \in I \right\}
=J_{Q}\vol(I) q^{(n-2)t}.
\]
Let 
\[
\mathrm{V}_{t_Q}:= \vol\left\{\mathbf v\in \mathcal K_\infty^n: \|\mathbf v\|\le q^{t_Q},\; Q(\mathbf v)\in I\right\}.
\]
For $T> t_Q$, it follows that
\[\begin{split}
&\vol\left\{\mathbf v\in \mathcal K_\infty^n: \|\mathbf v\|\le q^{t} ,\; Q(\mathbf v)\in I\right\}\\
&=\vol\left\{\mathbf v\in \mathcal K_\infty^n: q^{t_Q}<\|\mathbf v\|\le q^{t} ,\; Q(\mathbf v)\in I\right\}+\mathrm{V}_{t_Q}\\
&=\sum_{t=t_Q+1}^T J_Q\vol(I)q^{(n-2)t}+\mathrm{V}_{t_Q}\\
&=\frac {q^{n-2}J_Q}{q^{n-2}-1}\cdot \vol(I)q^{(n-2)T}+\left(\mathrm{V}_{t_Q}-\frac {q^{n-2}J_Q}{q^{n-2}-1}\cdot \vol(I)q^{(n-2)t_Q}\right)
\end{split}\]
Thus, we obtain the volume formula by taking $c_Q=\frac{q^{n-2}J_Q}{q^{n-2}-1}$. 
\end{proof}
\subsection{Proof of Theorem \ref{thm:N_QCount}}
Recall that if $Q$ is a non-degenerate and isotropic quadratic form, there exist $g\in \mathrm{G}$ and a quadratic form $Q_0$ of the form \eqref{standard quadratic form} such that $Q=Q_0\circ g$. Hence, for such $Q_0$, the set of $Q=Q_0\circ g$, where $g\in \mathrm{G}$, can be identified with $\mathcal{L}_n$ via $Q_0\circ g\mapsto g\mathcal{R}^n$. For such $Q$ and a measurable set $I\subseteq \mathcal{K}_{\infty}$, define
\begin{align*}
N_Q(I,t)&=\#\left\{\mathbf{v}\in \mathcal{R}^n:Q(\mathbf{v})\in I,\Vert \mathbf{v}\Vert\leq q^t\right\}
\\&=\#\{\mathbf{u}\in g\mathcal{R}^n:Q_0(\mathbf{u})\in I,\mathbf{u}\in gB(0,q^t)\}\\
&=\#g\mathcal{R}^n\cap Q_0^{-1}(I)\cap gB(0,q^t)
=\#g\mathcal R^n \cap A_{g,I,t},
\end{align*}
where $A_{g,I,t}:=Q_0^{-1}(I)\cap gB(0,q^t)$.
\begin{remark}
\label{rem:Covering}
Observe that $\SL_n(\mathcal O)$ is an open subset in $\SL_n(\mathcal K_\infty)$ which preserves the balls centered at the origin.
For any given compact set $\mathcal C\subseteq G=\SL_n(\mathcal K_\infty)$, there exist $\eta>0$ and $h_1,\dots,h_{\eta}\in \mathcal{C}$ such that $\mathcal{C}\subseteq \bigcup_{i=1}^{\eta}h_i\operatorname{SL}_n(\mathcal{O})$. Hence, for any $g\in h_i \SL_n(\mathcal O)$, we have
\[
N_Q(I,t)=\# g\mathcal R^n \cap A_{g,I,t}
=\# g\mathcal R^n \cap A_{h_i,I,t}.
\]
\end{remark}
\begin{theorem}
\label{thm:Disca.e.Small}
    Let $n\ge 3$, let $\frac 1 2 < \delta < 1$, and let $I\subseteq \mathcal{K}_{\infty}$ be a measurable set. Then for $m_\mathrm{G}$-almost every $g\in \mathrm{G}$, there exists $t_{g,I}>0$ so that for every $t\geq t_{g,I}$, we have
    $$D\left(g\mathcal{R}^n,A_{g,I,t}\right)<\operatorname{Vol}(A_{g,I_t,t})^{\delta}.$$
\end{theorem}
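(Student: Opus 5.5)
The plan is the standard Borel--Cantelli argument of \cite{KY}, fed by Lemma~\ref{lem:measLargDiscrep} and the volume asymptotics of Theorem~\ref{volume estimate}. Since $\mathrm{G}$ is $\sigma$-compact, it suffices to prove the statement for $m_{\mathrm{G}}$-almost every $g$ in a fixed compact set $\mathcal C\subseteq \mathrm{G}$. By Remark~\ref{rem:Covering} we may further cover $\mathcal C$ by finitely many cosets $h_i\SL_n(\mathcal O)$. On each coset the set $A_{g,I,t}=A_{h_i,I,t}$ does not depend on $g$, and neither does its volume. So we fix one $h=h_i$ and work with the single family of sets $A_t:=A_{h,I,t}$, $t\in\mathbb N$.

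\textbf{Step 1 (volume growth).} By Theorem~\ref{volume estimate}, applied to $Q=Q_0\circ h$, there is $c>0$ with $\vol(A_t)=c\,\vol(I)\,q^{(n-2)t}+O(1)$ for $t$ large. For a general bounded measurable $I$ we first reduce to the case of a finite disjoint union of balls, using a standard approximation argument or by restricting to such $I$. Hence $\vol(A_t)\asymp q^{(n-2)t}$, which grows exponentially in $t$.

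\textbf{Step 2 (Borel--Cantelli).} Put $T_t=\vol(A_t)^{\delta}$. Lemma~\ref{lem:measLargDiscrep} gives
\[
m_{\mathrm G}\bigl(\mathcal M^{\mathcal C\cap h\SL_n(\mathcal O)}_{A_t,T_t}\bigr)\ll_{\mathcal C}\frac{\vol(A_t)}{\vol(A_t)^{2\delta}}=\vol(A_t)^{1-2\delta}\asymp q^{-(2\delta-1)(n-2)t}.
\]
Since $\delta>\frac12$ and $n\ge 3$, these bounds are summable over $t\in\mathbb N$. By Borel--Cantelli, almost every $g\in\mathcal C\cap h\SL_n(\mathcal O)$ lies in only finitely many of the sets $\mathcal M_{A_t,T_t}$. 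For such $g$ there is $t_{g,I}$ with $D(g\mathcal R^n,A_{g,I,t})<\vol(A_{g,I,t})^{\delta}$ for all integers $t\ge t_{g,I}$. A countable union of null sets over the cosets $h_i$ and over an exhaustion of $\mathrm G$ by compact sets finishes the argument.

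\textbf{Main obstacle.} The delicate point is Step 1 together with the passage from integer $t$ to the statement as written. We need a lower bound $\vol(A_t)\gg q^{(n-2)t}$ that is uniform enough to make the series converge. We also need the sets $A_{g,I,t}$ to be genuinely independent of $g$ within a coset; this holds because $\SL_n(\mathcal O)$ preserves $B(0,q^t)$. In the ultrametric setting $\|\mathbf v\|\le q^t$ only changes at integer $t$, so no interpolation between consecutive $t$ is needed. This is simpler than the real case, where one must sandwich $A_t$ between sets at nearby discrete times. For a general measurable $I$ I would first handle finite unions of balls, which is all that is used later, and note that Lemma~\ref{lem:measLargDiscrep} requires only measurability of $A_t$.
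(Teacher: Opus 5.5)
Your proposal is correct and follows the paper's proof. You fix a compact $\mathcal C$, cover it by finitely many cosets $h_i\SL_n(\mathcal O)$ on which $A_{g,I,t}=A_{h_i,I,t}$, bound the bad sets by $\vol(A_{h_i,I,t})^{1-2\delta}\ll q^{(n-2)(1-2\delta)t}$ via the variance bound and Theorem~\ref{volume estimate}, and conclude by Borel--Cantelli. Your caveat that Theorem~\ref{volume estimate} only covers balls, so $I$ should be a ball or a finite union of balls, is a restriction the paper also uses tacitly.
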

\begin{proof}
    For a fixed compact set $\mathcal{C}\subseteq \mathrm{G}$ and $t\in \mathbb{N}$, define $\mathcal{B}_t\subseteq \mathcal{C}$ by
    $$\mathcal{B}_t=\left\{g\in \mathcal{C}:D\left(g\mathcal{R}^n,A_{g,I,t}\right)\geq \operatorname{Vol}(A_{g,I,t})^{\delta}\right\}.$$
    We prove that $m_{\mathrm{G}}\left(\limsup_{t\rightarrow \infty}\mathcal{B}_t\right)=0$, so that for $m_\mathrm{G}$-almost every $g\in \mathcal{C}$, we have $D\left(g\mathcal{R}^n,A_{g,I_t,t}\right)<\operatorname{Vol}(A_{g,I_t,t})^{\delta}$ for every sufficiently large $t$. Since this holds for any compact set $\mathcal{C}$, this concludes the proof. Note that
    $$m_{\mathrm{G}}\left(\limsup_{t\rightarrow \infty}\mathcal{B}_t\right)=\lim_{m\rightarrow \infty}m_{\mathrm{G}}\left(\bigcup_{t\geq m}\mathcal{B}_t\right)\leq \lim_{m\rightarrow\infty}\sum_{t=m}^{\infty}m_{\mathrm{G}}\left(\mathcal{B}_t\right).$$
    We now prove that $\sum_{t>0}m_{\mathrm{G}}(\mathcal{B}_t)<\infty$, so that the Borel Cantelli lemma implies that $m_{\mathrm{G}}(\limsup \mathcal{B}_t)=0$. By Remark \ref{rem:Covering}, there exist $h_1,\dots,h_{\eta}\in \mathcal{C}$ such that $\mathcal{C}\subseteq \bigcup_{i=1}^{\eta}h_i\operatorname{SL}_n(\mathcal{O})$. Thus, for every $g\in \mathcal C$, there exists $ i\in\left\{1,\dots, \eta\right\}$, such that $g\in h_i\operatorname{SL}_n(\mathcal{O})$. For $i=1,\dots, \eta$, define
    \begin{equation}
        \mathcal{B}_{t,i}:=\left\{g\in
        h_i\operatorname{SL}_n(\mathcal{O}):D\left(g\mathcal{R}^n,A_{h_i,I,t}\right)\geq \operatorname{Vol}(A_{h_i,I,t})^{\delta}\right\}.
    \end{equation}
    By Lemma \ref{lem:DiscInt} and Theorem \ref{volume estimate}, there exists $t_{h_i,I}$, such that for every $t>t_{h_i,I}$, we have
    \begin{equation}
    \begin{split}
    \label{eqn:m(B_t,i)Bnd}
        m_{\mathrm{G}}(\mathcal{B}_{t,i})\ll_{h_i}\operatorname{Vol}(A_{h_i,I_t,t})^{1-2\delta}
        \ll_{h_i,I} q^{t(n-2)(1-2\delta)}\\
    \end{split}
    \end{equation}
    for every large enough $t$.
    Since $\frac{1}{2}<\delta<1$,
    we have $\sum_{t=0}^{\infty}m_{\mathrm{G}}(\mathcal{B}_{t,i})<\infty$. 
    Since $\mathcal{C}\subseteq \bigcup_{i=1}^{\eta}h_i\operatorname{SL}_n(\mathcal{O})$, it follows that $\sum_{t=0}^{\infty}m_{\mathrm{G}}(\mathcal{B}_t)<\infty$, so that  $m_{\mathrm{G}}(\limsup_{t\rightarrow \infty}\mathcal{B}_t)=0$. 
\end{proof}
We now use Theorem \ref{thm:Disca.e.Small} to prove Theorem \ref{thm:N_QCount}. 
\begin{proof}[Proof of Theorem \ref{thm:N_QCount}]
    Let $\frac{1}{2}<\delta<1$. By Theorem \ref{volume estimate}, Remark \ref{rem:Covering}, and Theorem \ref{thm:Disca.e.Small}, for $m_\mathrm{G}$ almost every $g$, for $Q=Q_0\circ g$, and for every sufficiently large $t$, we have
    \begin{equation}
    \begin{split}
    \label{eqn:N_Q(I_t,t)Cnt}
        &\left|N_Q(I,t)-c_Qq^{(n-2)t+n}m_{\mathcal{K}_{\infty}}(I)\right|\\
        &\hspace{0.4in}\leq D(g\mathcal{R}^n,A_{g,I,t})+q^n\left|\operatorname{Vol}(A_{g,I,t})-c_Qq^{t(n-2)}m_{\mathcal{K}_{\infty}}(I)\right|\\
        &\hspace{0.4in}<\operatorname{Vol}(A_{g,I,t})^{\delta}+O_{Q,I}(1)\\
        &\hspace{0.4in}\ll \left(c_Qm_{\mathcal{K}_{\infty}}(I)q^{t(n-2)}+O_{Q,I}(1)\right)^{\delta}+O_{Q,I}(1)\\
        &\hspace{0.4in}=O_{Q,I}\left(q^{\delta t(n-2)}m_{\mathcal{K}_{\infty}}(I)^{\delta}\right).
    \end{split}
    \end{equation}
\end{proof}
\section{Witt's Theorem in Function Fields}
\label{sec:Witt}
In this section, we prove Theorem \ref{thm: Witt}. The proof is almost identical to the proof of \cite[Proposition A1]{HLM} and is included for completeness. 
\begin{proof}[Proof of Theorem \ref{thm: Witt}]
    We need to prove that for any $\mathbf{v}_1,\mathbf{v}_2\in \mathcal{K}_{\infty}^n$ with $\Vert \mathbf{v}_i\Vert=q^z$ and $Q(\mathbf{v}_i)=\alpha$ for $i=1,2$, there exists $k\in \mathrm{K}$ such that $k\mathbf{v}_1=\mathbf{v}_2$. It suffices to assume that $z=0$ and $\mathbf{v}_1=\mathbf{e}_1$ and $\mathbf{v}_2=\mathbf{v}\in \mathcal{O}_{\infty}^n\setminus \mathfrak{m}^n$, since all other cases follow directly from this case. We may assume that $Q(y_1,\dots,y_n)$ has one of the following forms:
    \begin{equation}
        \begin{split}
            u_1y_1^2+\dots+u_iy_i^2+x^{-1}(u_{i+1}y_{i+1}^2+\dots+u_ny_n^2)\text{  or }\\
            y_1y_2+u_3y_3^2+\dots+u_iy_i^2+x^{-1}(u_{i+1}y_{i+1}^2+\dots+u_ny_n^2),
        \end{split}
    \end{equation}
    where $u_i\in \mathcal{O}_{\infty}\setminus \mathfrak{m}$.
    Then for any $\mathbf{w}\in \mathcal{K}_{\infty}^n$, we have $Q(\mathbf{w})=\mathbf{w}^tY\mathbf{w}$, where  $Y=\begin{pmatrix}
        Y'&\\
        &x^{-1}Y''
    \end{pmatrix}$, where $Y'\in \operatorname{GL}_i(\mathcal{K}_{\infty}),Y''\in \operatorname{GL}_{n-i}(\mathcal{K}_{\infty})$ are nondegenerate modulo $x^{-1}$. The goal is to construct a matrix $k\in \mathrm{K}$ satisfying
    \begin{enumerate}
        \item \label{cond:Transitive} $k\mathbf{e}_1=\mathbf{v}$ and
        \item \label{cond:StabQ} $k^tYk=Y$,
    \end{enumerate}
    where the second condition ensures that $k\in \mathrm{K}$. Since $\mathcal{O}_{\infty}$ is the inverse limit of $\mathcal{R}_x/x^{-j}\mathcal{R}_x$, where $\mathcal{R}_x:=\mathbb{F}_q[x^{-1}]$, we construct a sequence $k^{(j)}\in \operatorname{GL}_n(\mathcal{R}_x/x^{-(j+1)}\mathcal{R}_x)$ satisfying
    \begin{enumerate}
        \item $k^{(j)}\mathbf{e}_{1}=\mathbf{v}\mod x^{-(j+1)}$, 
        \item ${k^{(j)}}^t Y k^{(j)}=B\mod x^{-(j+1)}$, and
        \item $k^{(\ell)}=k^{(j+1)}\mod x^{-(j+1)}$ for every $\ell \leq j$.
    \end{enumerate}
    Then $\lim_{\leftarrow} k_{\ell}$ will be an element satisfying \eqref{cond:Transitive} and \eqref{cond:StabQ}. In this case, we write $k^{(j)}=\sum_{i=0}^jk_ix^{-i}$, where $k_i\in \mathbb{F}_q$, and $\mathbf{v}=\sum_{j=0}^{\infty}x^{-j}\mathbf{v}_j=(v_1,\dots,v_n)$, where $\mathbf{v}_j$ is the projection of $\mathbf{v}$ on $\mathcal{R}_x/x^{-j}\mathcal{R}_x$.
    \paragraph{\textbf{Step 1: $j=0$.}} Let $k^{(0)}=k_0=\begin{pmatrix}
        A_0&B_0\\
        C_0&D_0
    \end{pmatrix}$. Then, $x^{-1}Y''\equiv 0\mod x^{-1}$, so that the matrix $k_0$ should satisfy the following
    \begin{equation}
    \begin{split}
    \label{eqn:WittStep0}
    \begin{pmatrix}
        Y'&\\
        &0
        \end{pmatrix}\equiv \begin{pmatrix}
        A_0^t&C_0^t\\
        B_0^t&D_0^t
    \end{pmatrix}\begin{pmatrix}
        Y'&\\
        &0
    \end{pmatrix}\begin{pmatrix}
        A_0&B_0\\
        C_0&D_0
    \end{pmatrix}\mod x^{-1}\\
    =\begin{pmatrix}
        A_0^tY'A_0&A_0^tY'B_0\\
        B_0^tY'A_0&B_0^tY'B_0
    \end{pmatrix}\mod x^{-1}.
    \end{split}
    \end{equation}
    Let $\operatorname{pr}_i:(\mathcal{R}_x/x^{-1}\mathcal{R}_x)^n\rightarrow (\mathcal{R}_x/x^{-1}\mathcal{R}_x)^i$ be the projection onto the first $i$ coordinates, that is $\operatorname{pr}_i(\alpha_1,\dots,\alpha_n)=(\alpha_1,\dots,\alpha_i)$. By assumption, $Q(\operatorname{pr}_i(\mathbf{e}_1))=Q(\operatorname{pr}_i(\mathbf{v}_0))\mod x^{-1}$. Note that $\mathcal{R}_x/x^{-1}\mathcal{R}_x\cong \mathbb{F}_q$. Therefore, by applying Witt's theorem for finite fields \cite[page 121]{Art} to $\left((\mathcal{R}_x/x^{-1}\mathcal{R}_x)^i,Y'\right)$, we obtain an isometry $A_0$, whose first column is $\operatorname{pr}_i(\mathbf{v}_0)$, and satisfies $A_0^tY'A_0=Y'\mod x^{-1}$. Since $A_0^tY'$ is invertible, by \eqref{eqn:WittStep0}, $B_0=0$. Similarly to \cite[Proposition A.1]{HLM}, at this step, we cannot determine $C_0$ and $D_0$, as they do not appear in the right hand side of \eqref{eqn:WittStep0}. 
    \paragraph{\textbf{Step 2: $j=1$.}} Let $k^{(1)}=\begin{pmatrix}
        A_0+xA_1&B_0+xB_1\\
        C_0+xC_1&D_0+xD_1
    \end{pmatrix} \in \mathcal{R}_x/x^{-2}\mathcal{R}_x$ be a matrix whose first column is $\mathbf{v}_0+x^{-1}\mathbf{v}_1$ and satisfies $(k^{(1)})^tYk^{(1)}=Y$. Hence, 
    \begin{equation}
    \begin{split}
        A_0^tY'A_0+x(A_1^tY'A_0+A_0^tY'A_1+C_0^tY''C_0)\equiv Y'\mod x^{-2},\\
        A_0^tY'B_0^t+x\left[A_1^tY'B_0+A_0^tY'B_1+C_0^tY''D_0\right]\equiv 0\mod x^{-2},\\
        B_0^tY'B_0+x\left[B_1^tY'B_0+B_0^tY'B_1+D_0^tY''D_0\right]\equiv xY''\mod x^{-2}.
    \end{split}
    \end{equation}
    Since $B_0=0$ and $A_0^tY'A_0=Y'+xZ_1^{11}\mod x^2$ for some symmetric matrix $Z_1^{11}$, then, we have
    \begin{equation}
    \begin{split}
    \label{eqn:x^{-1}CONDS}
        Z_1^{11}+A_1^tY'A_0+A_0^tY'A_1+C_0^tY''C_0\equiv 0 \mod x^{-1},\\
        A_0^tY'B_1+C_0^tY''D_0\equiv 0 \mod x^{-1},\\
        D_0^tY''D_0\equiv Y''\mod x^{-1}.
    \end{split}
    \end{equation}
    Let $C_0\in M_{n-i\times i}(\mathcal{R}_x/x^{-1}\mathcal{R}_x)$ be some matrix whose first column is $(v_{i+1},\dots,v_n)$. By Witt's theorem for finite fields \cite[page 121]{Art}, there exists a matrix $D_0\in M_{n-i\times n-i}(\mathcal{R}_x/x^{-1}\mathcal{R}_x)$ satisfying $D_0^tY''D_0=Y''\mod x^{-1}$. Hence, it suffices to prove that there exists $A_1\in M_{i\times i}(\mathcal{R}_x/x^{-1}\mathcal{R}_x)$ such that $Z_1^{11}+A_1^tY'A_0+A_0^tY'A_1+C_0^tY''C_0\equiv 0\mod x^{-1}$. 
    \paragraph{\textbf{Step 3}} We now prove the following claim to show that $$Z_1^{11}+A_1^tY'A_0+A_0^tY'A_1+C_0^tY''C_0\equiv 0 \mod x^{-1}$$ has a solution $A_1$.
    \begin{claim}
    \label{claim:SymmSol}
    For any invertible symmetric matrix $A\in M_{n\times n}(\mathcal{R}_x/x^{-1}\mathcal{R}_x)$ and any symmetric matrix $C\in M_{n\times n}(\mathcal{R}_x/x^{-1}\mathcal{R}_x)$, there exists a solution $X\in M_{n\times n}(\mathcal{R}_x/x^{-1}\mathcal{R}_x)$ to the equation $X^tA+AX=C$. 
    \end{claim}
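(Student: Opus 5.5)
We solve the equation explicitly rather than by a dimension count. The natural ansatz is $X=\tfrac12 A^{-1}C$; this is where we use that $q$ is odd, so that $2$ is invertible in $\mathcal{R}_x/x^{-1}\mathcal{R}_x\cong\mathbb{F}_q$. Since $A$ is invertible, $A^{-1}$ exists in $M_{n\times n}(\mathbb{F}_q)$. Since $A$ is symmetric, $(A^{-1})^t=(A^t)^{-1}=A^{-1}$, so $A^{-1}$ is symmetric as well.

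We then verify the equation directly. On the one hand, $AX=\tfrac12 AA^{-1}C=\tfrac12 C$. On the other hand,
\[
X^tA=\tfrac12 C^t(A^{-1})^tA=\tfrac12 C\,A^{-1}A=\tfrac12 C,
\]
where we used $C^t=C$ and $(A^{-1})^t=A^{-1}$. Adding the two gives $X^tA+AX=C$, as required.

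Alternatively, one can argue structurally. The linear map $X\mapsto X^tA+AX$ takes values in the symmetric matrices. It is surjective onto them because the substitution $X=A^{-1}S/2$ inverts it on the symmetric $S$. Either way, the existence of a solution is immediate.

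The only genuine subtlety is the characteristic. In characteristic $2$ the map $X\mapsto X^tA+AX$ lands only in alternating-type matrices, whose diagonal vanishes, so a general symmetric $C$ cannot be reached. This is why the standing assumption that $q$ is odd must be invoked here. No other obstacle is expected. In the application, we take $A=Y'$ (invertible mod $x^{-1}$) and $C=-(Z_1^{11}+C_0^tY''C_0)$ (symmetric), and solve for $A_0^tY'A_1$. Since $A_0$ is invertible, this determines $A_1$.
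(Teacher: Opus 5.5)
Your proof is correct, and it is more explicit than the paper's. The paper rewrites $X^tA+AX=C$ as a linear system in the $n^2$ entries of $X$. It discards the rows duplicated by symmetry, leaving $\frac{n(n+1)}{2}$ equations, and then asserts without further argument that the system has full rank because $A$ is invertible.

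You instead exhibit an explicit right inverse, $S\mapsto \tfrac12 A^{-1}S$ on symmetric matrices. This takes one line to verify and actually proves the paper's rank assertion. Your version also shows what the paper's argument leaves hidden: the full-rank claim is false in characteristic $2$. Indeed $(X^tA+AX)_{jj}=2(AX)_{jj}$, which is the factor $2$ in the diagonal blocks of the paper's system. So the claim genuinely relies on the standing assumption that $q$ is odd, as you note.

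The identity $X^tA+AX=(AX)^t+AX$ also gives the full solution set, $X=A^{-1}\bigl(\tfrac12 C+S\bigr)$ with $S$ skew-symmetric. This is worth using in your closing remark on the application. In the paper's Step 3 the first column of $A_1$ is also prescribed, and the bare existence statement does not address that. Your parametrization does: the first column of $S$ can be any vector with vanishing first entry. Hence the first column of $X$ can be prescribed, subject to a single scalar compatibility condition.
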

    \begin{proof}By viewing $n\times n$ matrices as $n^2$ dimensional vectors, the equation $X^tA+AX=C$ can be rewritten as 
    \begin{equation}
    \label{eqn:MatrixEq}
        \begin{pmatrix}
            A_{11}&A_{12}&\dots&A_{1n}\\
            A_{21}&A_{22}&\dots&A_{2n}\\
            \vdots&\vdots&\ddots&\vdots\\
            A_{n1}&A_{n2}&\dots&A_{nn}
        \end{pmatrix}\begin{pmatrix}
            X^{(1)}\\
            X^{(2)}\\
            \vdots\\
            X^{(n)}
        \end{pmatrix}=\begin{pmatrix}
            C^{(1)}\\
            C^{(2)}\\
            \vdots \\
            C^{(n)}
        \end{pmatrix},
    \end{equation}
    where $X^{(j)}=(x_{1j},\dots, x_{nj})^T$, $C^{(j)}=(c_{1j},\dots,c_{nj})^T$, and
    \begin{equation}
        A_{ij}=\begin{cases}
            \begin{pmatrix}
                a_{11}&a_{21}&\dots&\dots&a_{n1}\\
                \vdots&\vdots&\dots&\dots&\vdots\\
                2a_{1i}&2a_{2i}&\dots&\dots&2a_{ni}\\
                \vdots&\vdots&\dots&\dots&\vdots\\
                a_{1n}&a_{2n}&\dots&\dots&a_{nn}
            \end{pmatrix},& \text{when } i=j,\\
            \text{All entries are zero except the }j\text{-th row, which is } (a_{1i},\dots,a_{ni}),&i\neq j.
        \end{cases}
    \end{equation}
    Since $X^tA+AX$ and $C$ are both symmetric then, by removing identical rows from \eqref{eqn:MatrixEq}, one gets a system of linear equations from $(\mathcal{R}_x/x^{-1}\mathcal{R}_x)^{n^2}$ to $(\mathcal{R}_x/x^{-1}\mathcal{R}_x)^{\frac{n(n+1)}{2}}$. Since $A$ is invertible, the rank of this system is $\frac{n(n+1)}{2}$, and therefore there exists a solution to \eqref{eqn:MatrixEq}. 
\end{proof}
    Hence, by applying Claim \ref{claim:SymmSol}, with $C=-Z_1^{11}-C_0^tY''C_0$, the equation $Z_1^{11}+A_1^tY'A_0+A_0^tY'A_1+C_0^tY''C_0\equiv 0\mod x^{-1}$ has a solution $A_1$, where $Z_{1}^{11},Y',Y'',A_0,C_0$ and the first column of $A_1$ are all known. 
    \paragraph{\textbf{Step 4: General Case}} Assume that there exists some solution $k=\sum_{j=0}^{\infty}k_jx^{-j}$ which satisfies \eqref{cond:Transitive} and \eqref{cond:StabQ}. Since $k^tYk=Y$, 
    \begin{equation}
    \begin{split}
        \begin{pmatrix}
            Y'&\\
            &x^{-1}Y''
        \end{pmatrix}=\sum_{j=0}^{\infty}x^{-j}\left(\sum_{i=0}^j\begin{pmatrix}
            A_i^t&C_i^t\\
            B_i^t&D_i^t
        \end{pmatrix}\begin{pmatrix}
            Y'&\\
            &x^{-1}Y''
        \end{pmatrix}\begin{pmatrix}
            A_i&B_i\\
            C_i&D_i
        \end{pmatrix}\right)\\
        =\sum_{j=0}^{\infty}x^{-j}\left(\sum_{i=0}^j\begin{pmatrix}
            A_i^tY'A_{j-i}&A_i^tY'B_{j-i}\\
            B_i^tY'A_{j-i}&B_i^tY'B_{j-i}
        \end{pmatrix}\right)\\
        +\sum_{j=0}^{\infty}x^{-(j+1)}\left(\sum_{i=0}^j\begin{pmatrix}
            C_i^tY''C_{j-i}&C_i^tY''D_{j-i}\\
            D_i^tY''C_{j-i}&D_i^tY''D_{j-i}
        \end{pmatrix}\right).
    \end{split}    
    \end{equation}
    We now find $A_j,B_j,C_j,D_j$ inductively using step 3. Let $C_{j-1}$ be a matrix whose first column equal to $(v_{j-1}^{(i)},\dots,v_{j-1}^{(n)})$, where $\mathbf{v}=\sum_{\ell=0}^{\infty}x^{-\ell}\mathbf{v}_\ell$ and $\mathbf{v}_\ell=(v_\ell^{(1)},\dots,v_{\ell}^{(n)})^t\in \mathbb{F}_q^n$. Then by step 3 and the fact that $\mathbf{v}^tY\mathbf{v}=\sum_{k=0}^jx^{-k}\left(\sum_{i=0}^k\mathbf{v}_i^tY\mathbf{v}_{k-i}\right)\mod x^{-(j+1)}$, there exist $A_j,D_{j-1},B_j$ which satisfy the following equations:
    \begin{equation}
    \begin{split}
        A_0^tY'A_j+A_j^tY'A_0\equiv -\sum_{i=1}^{j-1}A_i^tY'A_{j-i}-\sum_{i=0}^{j-1}C_i^tY''C_{j-i-1}+Z_j^{11}\mod x^{-1}\\
        D_{j-1}^tY''D_0+D_0^tY''D_{j-1}=-\sum_{i=1}^{j-1}\left(D_i^tY''W_{j-i-1}+B_i^tY'B_{j-i}\right)+Z_n^{22}\mod x^{-1}\\
        A_0^tYB_j=-\sum_{i=0}^{j-1}\left(A_i^tY'B_{j-i}+C_i^tY''D_{j-i-1}\right)+Z_j^{12}\mod x^{-1},
    \end{split}
    \end{equation}
    where $Z_j^{11},Z_j^{22},$ and $Z_j^{12}$ are obtained from step 2 and step $j-1$. Hence, there exists $k\in \mathrm{K}$ such that $k^tYk=Y$ and $k\mathbf{e}_1=\mathbf{v}$.
\end{proof}
\bibliography{Ref}
\bibliographystyle{amsalpha}
\end{document}